\newenvironment{nouppercase}{%
  \renewcommand{\uppercasenonmath}[1]{}}{}
\renewcommand{\eqref}[1]{\textup{(\ignorespaces\ref{#1}\unskip\@@italiccorr)}}
\def\maketag@@@#1{\hbox{\m@th\normalfont\bfseries#1}}
\numberwithin{equation}{section}
\numberwithin{figure}{section}
\theoremstyle{plain}
\newtheorem{thm}[equation]{Theorem}
\newtheorem*{SepProb}{Separativity Problem}
\newtheorem*{MainThm0}{Separativity via Unit-regularity}
\newtheorem*{MainThm1}{Diagonal Reduction of Invertible Matrices}
\newtheorem*{MainThm2}{Diagonal Reduction over Separative Regular Rings}
\newtheorem{lemma}[equation]{Lemma}
\newtheorem{cor}[equation]{Corollary}
\newtheorem{prop}[equation]{Proposition}
\newtheorem{example}[equation]{Example}
\theoremstyle{definition}
\newtheorem{definition}[equation]{Definition}
\newtheorem{question}[equation]{Question}
\newtheorem{remark}[equation]{Remark}
\newcommand{\Q}{\ensuremath \mathbb{Q}}
\newcommand{\Z}{\ensuremath \mathbb{Z}}
\newcommand{\N}{\ensuremath \mathbb{N}}
\newcommand{\M}{\ensuremath \mathbb{M}}
\DeclareMathOperator{\End}{End}
\DeclareMathOperator{\im}{im}
\DeclareMathOperator{\coker}{coker}
\DeclareMathOperator{\U}{U}
\DeclareMathOperator{\FP}{FP}
\newcommand\subsetsim{\mathrel{\substack{
  \textstyle\subset\\[-0.2ex]\textstyle\sim}}}
\begin{document}

\title[Separativity, varieties, and diagonal reduction]{The separativity problem in terms of varieties and diagonal reduction}

\author{Pere Ara}
\address{Departament de Matemàtiques, Universitat Autònoma de Barcelona, 08193 Bellaterra, Barcelona, Spain, and Centre de Recerca Matemàtica, Edifici Cc, Campus de Bellaterra, 08193 Cerdanyola del Vallès, Barcelona, Spain}
\email{pere.ara@uab.cat}
\urladdr{https://www.crm.cat/person/77/ara-pere/}

\author{Ken Goodearl}
\address{Department of Mathematics, University of California, Santa Barbara, CA 93106, USA}
\email{goodearl@math.ucsb.edu}
\urladdr{https://web.math.ucsb.edu/~goodearl/}

\author{Pace P.\ Nielsen}
\address{Department of Mathematics, Brigham Young University, Provo, UT 84602, USA}
\email{pace@math.byu.edu}
\urladdr{https://mathdept.byu.edu/~pace/}

\author{Enrique Pardo}
\address{Departamento de Matemáticas, Facultad de Ciencias, Universidad de Cádiz, Campus de Puerto Real, 11510 Puerto Real (Cádiz), Spain}
\email{enrique.pardo@uca.es}
\urladdr{https://sites.google.com/gm.uca.es/webofenriquepardo/inicio}

\author{Francesc Perera}
\address{Departament de Matemàtiques, Universitat Autònoma de Barcelona, 08193 Bellaterra, Barcelona, Spain, and Centre de Recerca Matemàtica, Edifici Cc, Campus de Bellaterra, 08193 Cerdanyola del Vallès, Barcelona, Spain}
\email{francesc.perera@uab.cat}
\urladdr{https://mat.uab.cat/web/perera}

\keywords{diagonally reduce, exchange rings, inner inverses, regular rings, separativity}
\subjclass[2020]{Primary 16E50, Secondary 15A20, 16E20, 16U40, 16U60}

\begin{abstract}
We provide two new formulations of the separativity problem.

First, it is known that separativity (and strong separativity) in von Neumann regular (and exchange) rings is tightly connected to unit-regularity of certain kinds of elements.  By refining this information, we characterize separative regular rings in terms of a special type of inner inverse operation, which is defined via a single identity.  This shows that the separative regular rings form a subvariety of the regular rings.  The separativity problem reduces to the question of whether every element of the form $(1-aa')bac(1-a'a)$ in a regular ring is unit-regular, where $a'$ is an inner inverse for $a$.

Second, it is known that separativity in exchange rings is equivalent to regular matrices over corner rings being reducible to diagonal matrices via elementary row and column operations.  We show that for $2\times 2$ invertible matrices, four elementary operations are sufficient, and in general also necessary.  Dropping the invertibility hypothesis, but specializing to separative regular rings, we show that three elementary row operations together with three elementary column operations are sufficient, and again in general also necessary.  The separativity problem can subsequently be reframed in terms of the explicit number of operations needed to diagonally reduce.
\end{abstract}

\begin{nouppercase}
\maketitle
\end{nouppercase}

\section{Introduction}

In 1954, Kaplansky \cite{Kaplansky} posed three problems for abelian groups, which are:
\begin{itemize}
\item If $A\subsetsim B$ and $B\subsetsim A$ (that is, each is isomorphic to a submodule of the other), then is $A\cong B$?\smallskip
\item If $A\oplus A\cong B\oplus B$, then is $A\cong B$?\smallskip
\item If $C$ is finitely generated and $A\oplus C\cong B\oplus C$, then is $A\cong B$?
\end{itemize}
These questions have since become a standard against which to judge one's understanding of a category of modules over a ring $R$.   Depending on the qualities of the ring $R$, and on the characteristics of the module category under consideration, these questions can range from ``trivial'' to ``important unsolved problem.''

When $R$ is a (von Neumann) regular ring, and we focus on the category $\FP(R)$ of finitely generated projective right $R$-modules, it is sometimes possible to characterize when Kaplansky's problems have positive answers.  For example, the third question has a positive answer in $\FP(R)$ for exactly those regular rings that are unit-regular; this was first proven true by Handelman, see \cite[Theorem 2]{Handelman}. Unit-regular rings are also the regular rings with stable rank $1$, as shown in the independent work of Fuchs \cite[Corollary 1 and Theorem 4]{Fuchs} and Kaplansky (unpublished).  For additional related results and history see \cite[Chapter 4]{Goodearl}.

One can significantly strengthen the second hypothesis from Kaplansky's third test problem, by asking for any two modules $A,B\in \mathscr{C}$, where $\mathscr{C}$ is some category of modules:
\begin{equation}\label{Eq:SeparativityDef}
\text{If $A\oplus A\cong A\oplus B\cong B\oplus B$, then is $A\cong B$?}
\end{equation}
(This strengthens the second hypothesis of the third test question by combining the cases when $C=A$ and $C=B$.  Interestingly, this is also a specialization of the second test question.)  When the answer to \eqref{Eq:SeparativityDef} is positive, then following the literature one says that $\mathscr{C}$ is \emph{separative}.  Moreover, in the case when $\mathscr{C}=\FP(R)$, one says that the ring $R$ is \emph{separative}.

Since separativity depends on isomorphisms between modules and not equalities, then instead of working in the (large) category $\FP(R)$, it is occasionally more convenient to work with $V(R)$, the set of \emph{isomorphism types} of modules in $\FP(R)$.  The set $V(R)$ is a monoid, by defining for any two modules $A,B\in \FP(R)$ the addition
\[
[A]+[B]=[A\oplus B].
\]
Separativity of $R$ is equivalent to the assertion that $2[A]=[A]+[B]=2[B]$ implies $[A]=[B]$, in $V(R)$.

The initial study of separativity for regular rings, and more generally for exchange rings, began with the papers \cite{AGOPEarly} and \cite{AGOP}.  Since that time, many other papers have explored this topic. Separativity has a number of important and restrictive consequences in regular rings; see \cite[Theorem 6.1]{AGOP} for five examples.  For each of those five properties, it is an open question whether every regular ring has that property.  Thus, a major open problem in this area, which motivates much of this paper, is the following.

\begin{SepProb}
Is every regular \textup{(}or exchange\textup{)} ring separative?
\end{SepProb}

This problem has now been open for over 30 years.  Outside the class of exchange rings, separativity can easily fail, as for instance with the first Weyl algebra and with the coordinate ring of the 2-sphere; see \cite[Section 2]{GoodearlPaper}.  However, perhaps the most instructive parallel for us is in the category of finite rank, torsion-free abelian groups (abstractly these are just the subgroups of finite-dimensional vector spaces over the rational field $\Q$). This category is well-documented as the home of pathology in module theory (see \cite{FuchsI,FuchsII}). Despite this, examples of failure of separative cancellation are rare, and tied up with nontrivial number theory; see \cite{OMearaVinsonhaler} for details, as well as a concrete construction of a large class of explicit examples where separativity fails.

In this paper we continue developing the theory of separative regular rings.  Specifically, after a brief review of facts about inner inverses in Section \ref{Section:InnerInverses}, we explain in Section \ref{Section:NewInnerInverses} how to define separative regular (and separative exchange) rings in new ways in terms of regularity and unit-regularity.  A key result is the following ring-theoretic reformulation of separativity.

\begin{MainThm0}[Corollary {\ref{Cor:RegSepUReg} with Lemma \ref{Lemma:ShiftBackCorner}}]
A regular ring $R$, with an inner inverse operation $'$, is separative if and only if for any idempotent $e\in R$, and for any $a,b,c\in eRe$, the element
\[
x:=(1-aa')bac(1-a'a) +(1-e)
\]
is unit-regular in $R$.
\end{MainThm0}

We also prove a similar, but more complicated, characterization of separativity in exchange rings.  Moreover, the methods apply to other cancellation properties besides separativity, as multiple examples demonstrate.

A consequence of the previous theorem is that the separativity problem is equivalent to determining whether or not every such element $x$ in a regular ring is unit-regular (where we may even take $e=1$, by passing to a corner ring).  Another consequence of this work is a description of separative regular rings as a subvariety of regular rings.  This is worked out explicitly in Section \ref{Sec:FirstVariety}, while varieties of exchange rings appear in Section \ref{Sec:SecondVariety}.  A third consequence is a simpler diagonal reduction process for $2\times 2$ regular matrices over separative exchange rings.  In the last couple of sections we undertake the task of minimizing the number of elementary operations needed to diagonally reduce such matrices.  Ultimately, we prove the following two other main results:

\begin{MainThm1}[{Theorem \ref{Thm:FourIsEnough}}]
Over a separative exchange ring, every invertible $2\times 2$ matrix can be diagonally reduced using four elementary operations.
\end{MainThm1}

Generally, four elementary operations are also necessary.  Removing the invertibility hypothesis raises the number of needed operations, even when we restrict to regular rings.  In that case, we have:

\begin{MainThm2}[{Theorem \ref{Thm:SixWithThreeOnBothSides}}]
Over a separative regular ring, every $2\times 2$ matrix can be diagonally reduced using just three row operations together with three column operations. Nothing less on either side can work in general.
\end{MainThm2}

Readers may find Theorem \ref{Thm:DiagBy3Classification} and Proposition \ref{Prop:CompletableRows} of independent interest.  The first concerns connections between stable rank $1$ and diagonal reduction.  The second is a statement about completable unimodular rows in exchange rings.

\section{Inner inverses}\label{Section:InnerInverses}

Throughout this paper, let $R$ be a unital ring.  Recall that an element $a\in R$ is (von Neumann) \emph{regular} if it has an \emph{inner inverse} $b\in R$ (also called a \emph{quasi-inverse}), which satisfies $aba=a$.  Inner inverses are in general not unique; for example, any element of $R$ is an inner inverse for $0$.

When every element of a ring $R$ is regular, then $R$ is called a \emph{regular ring}.  These rings are a central fixture of modern noncommutative ring theory.  The reader is directed to \cite{Goodearl} for a more thorough study of these rings.  In case $R$ is regular, then we may as well fix some inner inverse $a'\in R$ for each element $a\in R$.  We can then view $'$ as a unary operation on $R$, called an \emph{inner inverse operation}.  The category of regular rings with a distinguished inner inverse operation is a variety, in the universal algebra sense, satisfying the axioms of rings together with the identity
\[
aa'a=a.
\]

In general, inner inverse operations on a given ring are not unique.  In fact, there are many potential ways to improve a given inner inverse.  For example, let $(R,\, ')$ be a regular ring (with a distinguished inner inverse operation), and let $a\in R$.  Fixing
\[
a^{\dagger}:=a'aa',
\]
then a quick computation shows that
\[
aa^{\dagger}a=a\ \text{ and }\ a^{\dagger}aa^{\dagger}=a^{\dagger}.
\]
Thus, $a^{\dagger}$ is an inner inverse for $a$, and in addition $a$ is an inner inverse for $a^{\dagger}$.  In this case, we say that $a$ and $a^{\dagger}$ form a pair of \emph{reflexive inverses}.  We can view $^{\dagger}$ as a unary operation on $R$, called a \emph{reflexive inverse operation}.

Since every regular ring has a reflexive inverse operation, it is tempting to simply declare that the initial operation $'$ is a reflexive inverse operation, and indeed one can do so.  But why stop there?  In fact, there is an infinite sequence of further refinements that can be made to inner inverse operations, as shown in \cite{BergmanStrongInner,NS,OMearaBetterInverses}.  In general, however, there is no operation satisfying all of those improvements simultaneously.  This is, of course, no problem if one only needs finitely many improvements.

Yet, there are often good reasons to not even assume that $'$ is a reflexive inverse operation.  To see why this is the case, we first recall another standard definition.  Given a ring $R$, then an element $a\in R$ is \emph{unit-regular} in the ring $R$ when there is some unit inner inverse.  If every element of $R$ has a unit inner inverse, then $R$ is called a \emph{unit-regular ring}.

Put another way, if $R$ is a unit-regular ring, then we may force $a'\in \U(R)$ for each $a\in R$, in which case we say that $'$ is a \emph{unit inner inverse operation}.  It is interesting to observe that these types of inner inverses can also be described equationally.  First, note that if $a\in \U(R)$ and if $a'$ is \emph{any} inner inverse for $a$, then from $aa'a=a$, we get $a'=a^{-1}$. Thus, $'$ is a unit inner inverse operation if and only if
\[
a'a''=1 \ \text{ and }\ a''a'=1.
\]
Surprisingly, these two identities can be expressed by a single identity; namely,
\[
a'a''a''a'=1,
\]
since this forces $a'$ to be both left and right invertible and hence a unit.

When given a unit-regular ring $(R,\, ')$ where $'$ is merely an inner inverse operation, then in general it is impossible to find a derived operation that is a unit inner inverse operation.  Indeed, according to \cite[Proposition 2.15]{OR}, there is no such derived operation that is \emph{uniformly} defined over the rings $\M_n(F)$, for $F$ a field, as the integer $n\geq 1$ varies.  Thus, there is no such derived operation at all on $\prod_{n=1}^{\infty}\M_n(F)$.

We now have two potential routes for improving a given inner inverse.  We may force it to be reflexive, and when working in a unit-regular ring we may force it to be a unit.  Unfortunately, these routes rarely are compatible.

\begin{lemma}
Let $(R,\ ')$ be a regular ring.  Then $'$ is both a reflexive inverse operation and a unit inner inverse operation if and only if $R=0$.
\end{lemma}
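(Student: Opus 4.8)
The plan is to dispatch the converse immediately (in the zero ring $0=1$, so every ring identity holds trivially) and to obtain the forward implication by testing both hypotheses on the single element $a=0$.

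For the forward direction, suppose $'$ is simultaneously a reflexive inverse operation and a unit inner inverse operation. First I would invoke reflexivity, which by definition demands both $aa'a=a$ and $a'aa'=a'$ for every $a\in R$. Specializing the second identity to $a=0$ yields $0'\cdot 0\cdot 0'=0'$; since the left-hand side is $0$, this forces $0'=0$. Next I would apply the unit hypothesis, which guarantees $a'\in\U(R)$ for every $a$, so in particular $0'$ is a unit of $R$. Combining the two conclusions, $0=0'$ is a unit, so there exists $u\in R$ with $0\cdot u=1$; but $0\cdot u=0$, whence $0=1$ and therefore $R=0$.

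The only subtle point is the choice of test element, and identifying which hypothesis does the real work. The unit condition by itself does not collapse $R$: over a field one may legitimately declare $0'=1$, which is a unit and a valid inner inverse of $0$. It is precisely the reflexive identity $a'aa'=a'$ evaluated at $a=0$ that pins $0'$ down to $0$, and only then does the unit requirement produce the contradiction. So the entire argument hinges on the incompatibility of the two operations at the single element $0$, and beyond recognizing this there is no genuine obstacle.
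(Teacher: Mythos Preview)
Your proof is correct and follows essentially the same approach as the paper: both evaluate the reflexive identity $a'aa'=a'$ at $a=0$ to force $0'=0$, then observe that $0$ can be a unit only in the zero ring. Your added commentary on why $a=0$ is the decisive test element is a nice touch but does not alter the underlying argument.
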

\begin{proof}
Any reflexive inverse $0'$ for $0$ satisfies
\[
0'=0'00'=0,
\]
and this can be a unit only in the zero ring.  Conversely, in the zero ring, there is only one inner inverse operation, and it satisfies the stated properties.
\end{proof}

For the following discussion, it is convenient to view ring elements in a slightly different way.  Recall that there is a natural isomorphism $R\cong \End(R_R)$, where an element $a\in R$ corresponds to the endomorphism ``left multiplication by $a$.''  Hereafter, we will freely identity elements of $R$ with their left multiplication action on the free right $R$-module $R_R$.  Thus, by $\im(a)$ we simply mean the right ideal $aR$, and by $\ker(a)$ be mean the right annihilator in $R$.  Note that we are privileging endomorphisms of the \emph{right} $R$-module $R_R$ over endomorphisms of the left $R$-module $_RR$, since from the beginning we made a choice to work in the category $\FP(R)$ of finitely generated projective \emph{right} $R$-modules.

We can classify regular and unit-regular elements according to their behavior as endomorphism, as follows.  (These classifications are part of a more general situation, which appears in the solutions to Exercises 4.14A$_1$ and 4.14C in \cite{LamExercises}.)  An element $a\in R$ is regular if and only if $\im(a)\subseteq^{\oplus}R_R$.  Letting $a'$ be any inner inverse, then $aa'$ is an idempotent, and
\[
\im(a)=aR=aa'R\subseteq^{\oplus}R_R.
\]
Also, $a'a$ is an idempotent, and
\[
\ker(a)={\rm ann}_r(a)=(1-a'a)R\subseteq^{\oplus}R_R.
\]
Continuing to assume that $a$ is regular, then it is unit-regular in $R$ if and only if
\[
\ker(a)\cong \coker(a):=R/aR,
\]
or in other words $(1-a'a)R\cong (1-aa')R$.  The complementary idempotents $aa'$ and $1-aa'$, as well as their opposites $a'a$ and $1-a'a$, thus play an important role.

Following \cite[Proposition 21.20]{Lam}, two idempotents $e,f\in R$ said to be \emph{isomorphic} (also, sometimes called \emph{equivalent}) when $eR\cong fR$, or equivalently $Re\cong Rf$.  Thus, the two idempotents $a'a$ and $aa'$ are always isomorphic (via left multiplication by $a$), but the complementary idempotents $1-a'a$ and $1-aa'$ are isomorphic exactly when $a$ is unit-regular.

An element $b\in R$ is an inner inverse to $a$ if and only if (left multiplication by) $b$ takes the summand $\im(a)$ isomorphically to a direct sum complement of $\ker(a)$ in $R_R$ and left multiplication by $a$ is the inverse isomorphism.  Nonuniqueness of inner inverses boils down to the fact that $b$ can act arbitrarily on a direct sum complement of $\im(a)$.  To be a reflexive inverse, $\ker(b)$ must be a direct sum complement of $\im(a)$.  In other words, $a,b\in R$ are a pair of reflexive inverses if and only if
\begin{itemize}
\item $R_R=\im(b)\oplus \ker(a)=\im(a)\oplus \ker(b)$ and
\item left multiplications by $a$ and $b$ yield inverse isomorphisms between $\im(b)$ and $\im(a)$.
\end{itemize}

\section{Defining separativity in a new way}\label{Section:NewInnerInverses}

The topic at hand is separative cancellation \eqref{Eq:SeparativityDef}.  However, this is a complicated condition, involving many isomorphisms.  Instead of immediately characterizing that situation in terms of regularity, we first begin with the following two simpler forms of cancellation.

\begin{definition}
A ring $R$ is \emph{core strongly separative} when
\[
R_R\cong A\oplus C\oplus C\cong B\oplus C\ \Rightarrow\ A\oplus C\cong B,
\]
for any $A,B,C\in \FP(R)$.  Moreover, $R$ is \emph{strongly separative} when
\[
A\oplus C\oplus C\cong B\oplus C\ \Rightarrow\ A\oplus C\cong B,
\]
for any $A,B,C\in \FP(R)$.
\end{definition}

Any strongly separative ring is core strongly separative, but the converse can fail.

\begin{example}\label{Example:CorSSepNotSSep}
There exists a core strongly separative ring that is not strongly separative.  Moreover, we can force the ring to be regular.
\end{example}
\begin{proof}
Let $M$ be the commutative (additive) monoid generated by two elements $u$ and $x$, subject to the two relations
\[
u+x=u\ \text{ and }\ x+x=x.
\]
Any element of this monoid is thus uniquely represented as one of the elements in
\[
\{x\}\cup\{ku\, :\, k\in \Z_{\geq 0}\}.
\]
(Define a homomorphism $M\to \N$ by sending $x\mapsto 0$ and $u\mapsto 1$; this shows that the multiples of $u$ are distinct, and all are not equal to $x$ except perhaps $0$.  Similarly, if $N$ is the two element commutative monoid $\{0,\infty\}$ subject to the rule $\infty+z=z$ for any $z\in N$, then the map $M\to N$ sending $x\mapsto \infty$ and $u\mapsto \infty$ shows that $x$ is nonzero.  Alternatively, one could use a normal form theorem, like Bergman's diamond lemma.)

Given any $y,z\in M$, we see (by case analysis, or using the monoid $N$ above) that
\[
y+z=0\ \Rightarrow y=z=0.
\]
This means that $M$ is \emph{conical}.  A similarly easy computation by cases shows that $M$ is a \emph{refinement} monoid as defined in \cite{AGOP}.  Thus, by \cite[Theorem B]{ABP}, together with the paragraph that follows that theorem, we know that $M$ is isomorphic to the monoid $V(R)$, for some (unital) regular ring $R$.

The ring $R$ is not strongly separative, since on the monoid side $2x=x$ does not imply $x=0$.  However, $R$ is core strongly separative, which we see as follows.  The isomorphism class $[R_R]$ can be made to correspond to $u$ (from the paragraph following \cite[Theorem B]{ABP}).  Suppose that
\[
u=a+c+c=b+c
\]
for some $a,b,c\in M$.  If $c=0$, then $a+c=b$ and we are done.  If $c=x$, then we must have $a=b=u$, and again $a+c=b$.  Finally, if $c=ku$ for some integer $k\geq 1$, then we have $u=a+2c=a+2ku$, which is impossible.
\end{proof}

Core strong separativity is defined in terms of modules.  However, we can reexpress it ring theoretically.  To do so, we first introduce an important class of elements in rings, in the following lemma--definition.

\begin{lemma}\label{Lemma:RightRepeaterDef}
Let $R$ be a ring, and let $x\in R$ be a regular element with inner inverse $x'\in R$.  The following are equivalent:
\begin{itemize}
\item[\textup{(1)}] $xR\subsetsim^{\oplus} R/xR$, or in other words $\im(x)$ is isomorphic to a direct summand of $\coker(x)$.
\item[\textup{(2)}] If $xR\oplus Y=R_R$ for some submodule $Y\subseteq R_R$, then $xR\subsetsim^{\oplus}Y$.
\item[\textup{(3)}] $xx'R\subsetsim^{\oplus} (1-xx')R$.
\item[\textup{(4)}] $Rxx'\subsetsim^{\oplus} R(1-xx')$.
\end{itemize}
Any regular element satisfying these conditions is called a {\bf right repeater}.
\end{lemma}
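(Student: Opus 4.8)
The plan is to reduce all four conditions to statements about the complementary idempotents $xx'$ and $1-xx'$, and then to pass between the right-hand and left-hand versions by a symmetry argument. Throughout I use the identifications recalled in Section~\ref{Section:InnerInverses}: since $x$ is regular with inner inverse $x'$, the element $xx'$ is idempotent, $\im(x)=xR=xx'R$ is a direct summand of $R_R$ with complement $(1-xx')R$, and hence $\coker(x)=R/xR\cong (1-xx')R$.

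The equivalences $(1)\Leftrightarrow(3)$ and $(1)\Leftrightarrow(2)$ are then just bookkeeping with these isomorphisms. For $(1)\Leftrightarrow(3)$, condition $(1)$ reads $xR\subsetsim^{\oplus}R/xR$; replacing $xR$ by $xx'R$ and $R/xR$ by the isomorphic module $(1-xx')R$ turns it verbatim into $(3)$. For $(1)\Leftrightarrow(2)$, the point is that every complement $Y$ of $xR$ in $R_R$ satisfies $Y\cong R/xR$ (via the projection along $xR$). Hence $(2)$, applied to the particular complement $Y=(1-xx')R$, gives $(1)$; and conversely, given $(1)$, any complement $Y$ satisfies $xR\subsetsim^{\oplus}R/xR\cong Y$, which is $(2)$.

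The substantive step is $(3)\Leftrightarrow(4)$, for which I would promote the cited equivalence $gR\cong hR\Leftrightarrow Rg\cong Rh$ for idempotents, namely \cite[Proposition 21.20]{Lam}, to its direct-summand analogue: for any idempotents $e,f\in R$ one has $eR\subsetsim^{\oplus}fR$ if and only if $Re\subsetsim^{\oplus}Rf$. The plan is to observe that a direct summand of the right module $fR$ has the form $gR$ for an idempotent $g\in fRf$, with complementary summand $(f-g)R$, and that $g$ and $f-g$ are then orthogonal idempotents summing to $f$, so that on the left $Rf=Rg\oplus R(f-g)$. Thus $eR\subsetsim^{\oplus}fR$ amounts to $eR\cong gR$ for some such $g$, and the cited result converts this to $Re\cong Rg$, whence $Re\subsetsim^{\oplus}Rf$; the reverse implication is symmetric. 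Applying this principle with $e=xx'$ and $f=1-xx'$ yields $(3)\Leftrightarrow(4)$.

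I expect the main obstacle to be the summand bookkeeping inside the symmetry principle: one must verify that the direct summands of $fR$ are exactly the $gR$ with $g\in fRf$ idempotent (identifying projections of $fR$ with idempotents of $\End(fR_R)\cong fRf$), and then check that the resulting left summand $Rg$ of $Rf$ matches up correctly. Once this is set up, the two one-sided summand relations translate into one another through \cite[Proposition 21.20]{Lam}, and the chain $(3)\Leftrightarrow(4)$, together with the bookkeeping equivalences above, completes the proof.
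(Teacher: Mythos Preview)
Your proposal is correct and follows essentially the same approach as the paper: the equivalences among (1), (2), (3) are handled by noting that all complements of $xR$ are isomorphic to $R/xR\cong(1-xx')R$, and $(3)\Leftrightarrow(4)$ is reduced via \cite[Proposition 21.20]{Lam} to the observation that both conditions say $xx'$ is isomorphic to a subidempotent of $1-xx'$. Your write-up is simply more explicit about the summand bookkeeping inside $fR$ and $Rf$, which the paper compresses into a single sentence.
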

\begin{proof}
The regularity hypothesis on $x$ guarantees that $xR\subseteq^{\oplus}R_R$.  Every direct summand complement of $xR$ in $R_R$ is isomorphic to $R/xR$.  Also, $(1-xx')R$ is one of those direct sum complements.  As the relation $\subsetsim^{\oplus}$ is an isomorphism invariant, this proves the equivalence of the first three conditions.  Finally, from the left-right symmetry of isomorphic idempotents, \cite[Proposition 21.20]{Lam}, we see that $(3)\Leftrightarrow (4)$, since both conditions assert that the idempotent $xx'$ is isomorphic to a subidempotent of $(1-xx')$.
\end{proof}

An important class of examples of right repeaters, which we suggest that the reader keep in mind, are the following.  Given an integer $n\geq 1$ and a field $F$, suppose that $R=\M_n(F)$.  Then a matrix $x\in R$ is a right repeater exactly when ${\rm rank}(x)\leq n/2$.

Returning to the situation where $R$ is an arbitrary ring, assume that the premise of core strong separativity, $R_R\cong A\oplus C\oplus C\cong B\oplus C$, is instantiated by some $A,B,C\in \FP(R)$.  After replacing these modules with isomorphic copies, and renaming as necessary, we may assume that there are internal direct sum decompositions $R=A\oplus C\oplus C_1=B\oplus C_2$, where $C\cong C_1\cong C_2$.  Letting $x\in R$ be any endomorphism that annihilates $B$, and that sends $C_2$ isomorphically to $C_1$, we see that $x$ is a right repeater.

Conversely, let $x\in R$ be any right repeater.  By Lemma \ref{Lemma:RightRepeaterDef}(3), write $(1-xx')R=A\oplus C$ with $C\cong C_1:=xR$.  Putting $B:=(1-x'x)R$ and $C_2:=x'xR$, we now have
\[
R_R=A\oplus C\oplus C_1=B\oplus C_2,
\]
with $C\cong C_1\cong C_2$.  Thus right repeaters are exactly what is needed to instantiate the premise of the implication defining core strong separativity.  Moreover, the conclusion of that implication is $(1-xx')R=A\oplus C\cong B=(1-x'x)R$.  So, the conclusion holds if and only if the right repeater is unit-regular.  In other words:

\begin{prop}\label{Prop:BasicStatementAboutRightRepeaters}
A ring is core strongly separative if and only if all right repeaters in the ring are unit-regular.
\end{prop}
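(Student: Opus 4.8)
The plan is to harvest the correspondence already assembled in the discussion preceding the statement: each instantiation of the defining implication of core strong separativity gives rise to a right repeater, each right repeater arises this way, and under this correspondence the conclusion $A \oplus C \cong B$ translates into unit-regularity of the repeater. With this in place, both directions of the biconditional reduce to short bookkeeping.

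For the implication that core strong separativity forces every right repeater to be unit-regular, I would fix an arbitrary right repeater $x$ with inner inverse $x'$. Lemma \ref{Lemma:RightRepeaterDef}(3) lets me write $(1-xx')R = A \oplus C$ with $C \cong C_1 := xR$; setting $B := (1-x'x)R$ and $C_2 := x'xR$, and using that $x'x$ and $xx'$ are isomorphic idempotents (via left multiplication by $x$, so that $C_2 \cong C_1$), I obtain the two internal decompositions $R_R = A \oplus C \oplus C_1 = B \oplus C_2$. These instantiate the premise $R_R \cong A \oplus C \oplus C \cong B \oplus C$, so core strong separativity yields $A \oplus C \cong B$, that is $(1-xx')R \cong (1-x'x)R$; by the classification of unit-regular elements in Section \ref{Section:InnerInverses}, this says precisely that $x$ is unit-regular. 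For the converse I would run the same identifications backwards: given $R_R \cong A \oplus C \oplus C \cong B \oplus C$, pass to internal decompositions $R = A \oplus C \oplus C_1 = B \oplus C_2$ with $C \cong C_1 \cong C_2$, choose $x$ annihilating $B$ and mapping $C_2$ isomorphically onto $C_1$ (a right repeater, as noted above), invoke the hypothesis to get $x$ unit-regular, and read off $A \oplus C \cong B$.

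Since the module-theoretic correspondence is already established before the statement, the genuinely delicate point is the translation between the conclusion ``$A \oplus C \cong B$'' and unit-regularity of $x$, and this is where I would be most careful. It hinges on the endomorphism dictionary of Section \ref{Section:InnerInverses}: that $\im(x) = xR = xx'R$, that $(1-xx')R$ is a complement of $\im(x)$ isomorphic to $\coker(x)$, that $\ker(x) = (1-x'x)R$, and that a regular element is unit-regular exactly when $\ker(x) \cong \coker(x)$. Once these identifications are fixed, the conclusion and unit-regularity become the same assertion, and the proposition follows.
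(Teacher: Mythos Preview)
Your proposal is correct and follows essentially the same approach as the paper: the paper's ``proof'' is precisely the two paragraphs of discussion immediately preceding the proposition, which set up the right-repeater/instantiation correspondence in both directions and identify the conclusion $A\oplus C\cong B$ with $(1-xx')R\cong(1-x'x)R$, i.e., unit-regularity of $x$. You have reproduced that argument faithfully, with the same constructions and the same appeal to Lemma~\ref{Lemma:RightRepeaterDef}(3) and the Section~\ref{Section:InnerInverses} dictionary.
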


We will do better than this, by showing that unit-regularity of a special subset of the right repeaters suffices.  We will work in an arbitrary ring, where some elements of $R$ may fail to have any inner inverse, but we will still find it convenient to fix a unary operation $'$ on $R$ such that $aa'a=a$ whenever $a$ happens to be regular.  For simplicity, we refer to $'$ as an inner inverse operation on the regular elements.

\begin{prop}\label{Prop:SpecialImageRepeaters}
Let $R$ be a ring, and fix an inner inverse operation $'$ on the regular elements of $R$.  If $a\in R$ is regular, and if $b\in R$ is such that $(1-aa')baa'$ is regular, then
\[
x:=(1-aa')ba
\]
is a right repeater.
\end{prop}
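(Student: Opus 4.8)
The plan is to reduce everything to the regularity hypothesis on $y := (1-aa')baa'$ by first identifying $\im(x)$ with $\im(y)$, and then reading off the right repeater condition directly from Lemma \ref{Lemma:RightRepeaterDef}(3). Write $e := aa'$, an idempotent, and observe that $y = (1-e)be$. Since $a=aa'a$, one checks that $aR = eR$, so
\[
\im(x) = (1-e)b\,(aR) = (1-e)b\,(eR) = \im\big((1-e)be\big) = \im(y).
\]
By hypothesis $y$ is regular, so $\im(y)$ is a direct summand of $R_R$; hence $\im(x)$ is a direct summand, which simultaneously shows that $x$ is regular and that $\im(x)$ is projective.

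Next I would locate $\im(x)$ inside the complement $(1-e)R$. Because $x=(1-e)ba$, clearly $\im(x)\subseteq (1-e)R$, and since $\im(x)$ is a direct summand of $R_R$ contained in $(1-e)R$, the modular law yields a decomposition $(1-e)R = \im(x)\oplus D$ for some submodule $D$. Thus $R_R = eR\oplus \im(x)\oplus D$, so $eR\oplus D$ is a direct sum complement of $\im(x)$. Every complement of $\im(x)$ is isomorphic to $R/\im(x)$; in particular the canonical complement satisfies $(1-xx')R \cong eR\oplus D$, whence $eR\subsetsim^{\oplus}(1-xx')R$.

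It then remains to show $\im(x)\subsetsim^{\oplus} eR$, and here the projectivity of $\im(x)$ does the work. Since $y=ye$, left multiplication by $y$ restricts to a surjection $eR\to yeR = \im(y)=\im(x)$; as $\im(x)$ is projective, this surjection splits, exhibiting $\im(x)$ as (isomorphic to) a direct summand of $eR$. Chaining the two relations gives
\[
\im(x) = xx'R \subsetsim^{\oplus} eR \subsetsim^{\oplus} (1-xx')R,
\]
which is exactly condition (3) of Lemma \ref{Lemma:RightRepeaterDef}; therefore $x$ is a right repeater.

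The only genuinely substantive step is the identification $\im(x)=\im(y)$, since it is what lets the regularity hypothesis on $(1-aa')baa'$ — rather than on $x$ itself — guarantee that $\im(x)$ is a projective direct summand of $R_R$; everything afterward is routine bookkeeping of complements together with the fact that every epimorphism onto a projective splits. I expect the main point to watch is the direction of the relation $\subsetsim^{\oplus}$: one must be careful to land $\im(x)$ inside the complement $(1-xx')R$, via the factor $eR=\im(a)$, and not the reverse.
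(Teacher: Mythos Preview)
Your proof is correct and follows the same overall scaffold as the paper's: both identify $\im(x)$ with $\im\big((1-aa')baa'\big)$, conclude $x$ is regular, locate $\im(x)$ inside $(1-aa')R$ so that $aa'R$ sits in a complement of $\im(x)$, and then embed $\im(x)$ as a summand of $aa'R$. The difference is in that last step. You invoke projectivity of $\im(x)$ to split the surjection $eR\twoheadrightarrow\im(y)$ given by left multiplication by $y$, which is clean and short. The paper instead uses an explicit element-level construction (the ``flip-and-double trick''): from the idempotent $xx^{\dagger}=(1-aa')bax^{\dagger}$ it produces the isomorphic idempotent $ax^{\dagger}xx^{\dagger}(1-aa')b$ lying in $aR$, thereby writing down the embedding $\im(x)\hookrightarrow aa'R$ as left multiplication by a specific ring element.

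The trade-off is that your splitting argument is nonconstructive, whereas the paper's version yields a concrete formula for the isomorphism in terms of $a$, $b$, and the inner inverse operation. This explicitness is recorded immediately afterward (Remark~\ref{Remark:ExplicitIsos}) and is what later makes the variety-theoretic arguments in Section~\ref{Sec:FirstVariety} go through: to varietize separative regular rings one needs uniformly defined expressions, not merely existence of splittings. So your proof establishes the proposition, but if you need the explicit isomorphism downstream you would have to supply it separately.
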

\begin{proof}
Assuming that $a$ is regular, then $aR=aa'R$ and hence
\[
xR=(1-aa')baR=(1-aa')baa'R.
\]
Assuming $(1-aa')baa'$ is regular, then $xR$ is a direct summand of $R_R$.  Thus, $x$ is regular.  Since $xR\subseteq^{\oplus} (1-aa')R$, we may fix an inner inverse $x^{\dagger}$ for $x$, possibly different from $x'$, such that $aa'R\subseteq^{\oplus}(1-xx^{\dagger})R$.

Now, $xx^{\dagger}=(1-aa')bax^{\dagger}$ is an idempotent.  Hence, $ax^{\dagger}xx^{\dagger}(1-aa')b$ is an isomorphic idempotent.  (Here we are using the easy fact, called the ``flip-and-double trick,'' that if $cd$ is an idempotent, then $dcdc$ is also an idempotent isomorphic to $cd$.  Take $c:=(1-aa')b$ and $d:=ax^{\dagger}$.)  Therefore, $xR$ is isomorphic to the summand
\[
ax^{\dagger}xx^{\dagger}(1-aa')bR \subseteq^{\oplus}aR= aa'R\subseteq^{\oplus} (1-xx^{\dagger})R.\qedhere
\]
\end{proof}

\begin{remark}\label{Remark:ExplicitIsos}
With access to $x$ and to the parameters $a$ and $b$, the previous proof explicitly constructs an isomorphism, using the flip-and-double trick, from $\im(x)$ to a summand in a complement of $\im(x)$.
\end{remark}

We are now ready to limit the right repeaters needed to establish core strong separativity.

\begin{thm}\label{Thm:ImageRepeatUnitRegCor}
Let $R$ be a ring, and fix a distinguished inner inverse operation $'$ on the set of regular elements of $R$.  Then $R$ is core strongly separative if and only if the right repeaters from \textup{Proposition \ref{Prop:SpecialImageRepeaters}} are all unit-regular.
\end{thm}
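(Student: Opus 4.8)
The plan is to deduce the theorem from Proposition \ref{Prop:BasicStatementAboutRightRepeaters}, which already characterizes core strong separativity as unit-regularity of \emph{all} right repeaters. So the real content is to show that unit-regularity of just the special right repeaters from Proposition \ref{Prop:SpecialImageRepeaters} forces unit-regularity of every right repeater. The forward direction is immediate: if $R$ is core strongly separative, then by Proposition \ref{Prop:BasicStatementAboutRightRepeaters} every right repeater is unit-regular, and since the elements produced by Proposition \ref{Prop:SpecialImageRepeaters} are themselves right repeaters, they in particular are unit-regular.

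For the reverse direction, I would start with an arbitrary right repeater $y$ and manufacture a special right repeater $x$ whose unit-regularity is equivalent to that of $y$. The natural choice is to take $a := y$ in Proposition \ref{Prop:SpecialImageRepeaters}. By Lemma \ref{Lemma:RightRepeaterDef}(3), the right repeater hypothesis supplies a direct summand $C$ of $(1-yy')R$ together with an isomorphism $\phi\colon yy'R \to C$. Viewing $\phi$ as a homomorphism and precomposing with the projection $yy'$, let $b\in R$ be the element acting by $r \mapsto \phi(yy'r)$, and set
\[
x := (1-yy')by.
\]
A short computation gives $(1-yy')b = b$ and $byy' = b$, so that $(1-aa')baa' = b$; since $\im(b)=C$ is a direct summand of $R_R$, the element $b$ is regular, and hence $x$ is genuinely one of the special right repeaters of Proposition \ref{Prop:SpecialImageRepeaters}.

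The crux is then to compare kernels and cokernels. I expect to verify that $x = \phi y$ as endomorphisms, from which $\im(x) = C$, while $\ker(x) = \ker(y) = (1-y'y)R$, the latter because $\phi$ is injective on $\im(y)$. Writing $R = A\oplus C\oplus C_1$ with $C_1 := yR$ and $A\oplus C = (1-yy')R$, one has $\coker(y) \cong A\oplus C$ and $\coker(x) \cong A\oplus C_1$. As $C \cong C_1$, these cokernels are isomorphic while the kernels literally coincide; therefore $x$ is unit-regular if and only if $y$ is. The main obstacle is precisely this bookkeeping: one must confirm that the summand $C$ furnished only abstractly by the repeater condition can be encoded into a single ring element $b$ so that the resulting $x$ is simultaneously a legitimate special right repeater (regularity of $(1-aa')baa'$) and has kernel and cokernel matching those of $y$ up to the isomorphism $C\cong C_1$. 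Granting this, unit-regularity of all special right repeaters yields unit-regularity of $y$, and Proposition \ref{Prop:BasicStatementAboutRightRepeaters} completes the proof.
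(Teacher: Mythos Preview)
Your proposal is correct and follows essentially the same approach as the paper: both take the data encoding a right repeater, choose $a$ to be (essentially) that repeater, and build $b$ as an element carrying $\im(a)$ isomorphically into $(1-aa')R$, so that $x=(1-aa')ba$ is a special right repeater with the same kernel and an isomorphic cokernel. The only organizational difference is that you route the converse through Proposition~\ref{Prop:BasicStatementAboutRightRepeaters} (showing every right repeater is unit-regular), whereas the paper bypasses that step and verifies the module cancellation $A\oplus C\cong B$ directly from the unit-regularity of~$x$.
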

\begin{proof}
By Proposition \ref{Prop:BasicStatementAboutRightRepeaters}, it suffices to show the converse.  Assume every right repeater of that special form is unit-regular.  Also, assume that $R_R=A\oplus C\oplus C_1=B\oplus C_2$ for some right ideals $A,B,C,C_1,C_2\subseteq R_R$, with $C\cong C_1\cong C_2$.

Let $a\in R$ be any endomorphism with kernel $B$ that sends $C_2$ isomorphically to $C$.  Notice that $(1-aa')R$ is a direct sum complement of $\im(a)$, as is $A\oplus C_1$, and so $(1-aa')R\cong A\oplus C_1$.  Write $(1-aa')R=A'\oplus C_1'$ with $A\cong A'$ and $C_1\cong C_1'$.

Let $b$ be any endomorphism with kernel $(1-aa')R$, that sends $C$ isomorphically to $C_1'$.  Thus, $b=(1-aa')baa'$.  Also $b$ is regular, having an image that is a direct summand.

Now, putting $x:=(1-aa')ba$, we see that this is an endomorphism with kernel $B$ that sends $C_2$ isomorphically to $C_1'$.  Since $A'\oplus C$ is a direct sum complement of $C_1'$ in $R_R$, then
\[
A\oplus C\cong A'\oplus C\cong \coker(x)\cong \ker(x)=B,
\]
where the third isomorphism follows from the unit-regularity of $x$.
\end{proof}

\begin{cor}\label{Cor:SpecialImageRepeatersUnitReg}
A regular ring $(R,\ ')$ is core strongly separative if and only if $(1-aa')ba$ is unit-regular, for every $a,b\in R$.
\end{cor}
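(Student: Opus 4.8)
The plan is to deduce this directly from Theorem \ref{Thm:ImageRepeatUnitRegCor} by observing that, when $R$ is a regular ring, the special right repeaters appearing in Proposition \ref{Prop:SpecialImageRepeaters} are exactly the elements of the form $(1-aa')ba$ with $a,b\in R$ arbitrary. Theorem \ref{Thm:ImageRepeatUnitRegCor} already supplies, for an arbitrary ring with an inner inverse operation on its regular elements, the equivalence between core strong separativity and unit-regularity of every right repeater produced by Proposition \ref{Prop:SpecialImageRepeaters}. So the only remaining task is to pin down that family of right repeaters precisely under the extra hypothesis that $R$ is regular.

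First I would check that the two regularity side-conditions in Proposition \ref{Prop:SpecialImageRepeaters} become vacuous here. Since $R$ is a regular ring, every element of $R$ is regular; in particular $a$ is regular for every $a\in R$, and the element $(1-aa')baa'$ is regular for every $a,b\in R$, simply because it lies in $R$. Hence the hypotheses of Proposition \ref{Prop:SpecialImageRepeaters} hold for \emph{every} pair $a,b\in R$, and the right repeaters it yields are precisely the elements $x=(1-aa')ba$ as $a,b$ range over $R$, with no pairs excluded.

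Finally I would combine these observations. By Theorem \ref{Thm:ImageRepeatUnitRegCor}, $R$ is core strongly separative if and only if each of these $x$ is unit-regular, which is exactly the asserted condition. I do not anticipate a genuine obstacle; the sole point to verify with care is that the side-condition requiring $(1-aa')baa'$ to be regular is automatic in a regular ring, so that the family of special right repeaters is not a proper subset of $\{(1-aa')ba : a,b\in R\}$ and the equivalence transfers verbatim.
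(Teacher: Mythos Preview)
Your proposal is correct and matches the paper's intended argument: the corollary is stated without proof immediately after Theorem \ref{Thm:ImageRepeatUnitRegCor}, precisely because in a regular ring the regularity hypotheses on $a$ and on $(1-aa')baa'$ in Proposition \ref{Prop:SpecialImageRepeaters} are automatic, so the special right repeaters are exactly the elements $(1-aa')ba$ for arbitrary $a,b\in R$.
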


Another way to think about this corollary is that core strongly separative regular rings have special inner inverse operations, where every element of the form $(1-aa')ba$ has a unit inner inverse.

Having dealt with the special case of \emph{core} strong separativity, let us now handle the non-core case, which is the more natural class.  We will find it convenient to restrict this classification to the class of exchange rings (which, importantly, contains the regular rings).  A ring $R$ is an \emph{exchange ring} when the right $R$-module $R_R$ satisfies the finite exchange property in direct sums of modules.  Alternative characterizations for exchange rings appear throughout the literature, but see \cite{Nicholson} for an introduction to this topic.

The following lemma describes a nice characterization of strong separativity (and separativity) in exchange rings, which in retrospect partly explains the ``core'' restriction.

\begin{lemma}\label{Lemma:MainRestriction}
Let $R$ be an exchange ring.  The following hold:
\begin{itemize}
\item[\textup{(1)}] $R$ is strongly separative if and only if
\[
A\oplus C\oplus C\cong B\oplus C\subsetsim R_R \ \Rightarrow\ A\oplus C\cong B,
\]
for all $A,B,C\in \FP(R)$.
\item[\textup{(2)}] $R$ is separative if and only if
\[
A\oplus C\oplus C \cong B\oplus C\oplus C\subsetsim R_R\ \Rightarrow\ A\oplus C\cong B\oplus C,
\]
for all $A,B,C\in \FP(R)$.
\end{itemize}
\end{lemma}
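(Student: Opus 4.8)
The two forward implications are immediate: if $R$ is strongly separative (respectively separative), then the displayed implication holds for \emph{all} $A,B,C\in\FP(R)$, so a fortiori it holds under the extra hypothesis that $B\oplus C$ (respectively $B\oplus C\oplus C$) be isomorphic to a submodule of $R_R$. The content is therefore the two converses, and I would treat both by the same mechanism after passing to the monoid $V(R)$. Since $R$ is an exchange ring, $V(R)$ is a conical refinement monoid (see \cite{AGOP}), so I may use Riesz refinement freely; write $u:=[R_R]$ for the canonical order unit. In these terms the unrestricted form of (1) is exactly the defining condition of strong separativity, $a+2c=b+c\Rightarrow a+c=b$, while the unrestricted form of (2), namely $a+2c=b+2c\Rightarrow a+c=b+c$, is one of the standard reformulations of separativity for refinement monoids, equivalent to \eqref{Eq:SeparativityDef}; I would quote this last equivalence from \cite{AGOP}. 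Thus in each case it suffices to delete the hypothesis ``${}\le u$'' from the restricted implication.

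The plan is to argue by induction on the least integer $m$ with $[B\oplus C]\le mu$ (respectively $[B\oplus C\oplus C]\le mu$); such an $m$ exists because $u$ is an order unit. The case $m=1$ is precisely the restricted hypothesis. For the inductive step I would exploit that a free rank-one module $R_R$, being a finitely generated projective over an exchange ring, has the finite exchange property. Fix $G$ with $(B\oplus C)\oplus G\cong R^m$, and apply the exchange property of a free rank-one summand $R_R\le R^m$ to the internal decomposition $R^m=B\oplus C\oplus G$. This yields summand decompositions $B=B_0\oplus B_1$, $C=C_0\oplus C_1$, $G=G_0\oplus G_1$ with $R^m=R_R\oplus B_1\oplus C_1\oplus G_1$; hence $B_0\oplus C_0\oplus G_0\cong R_R$ and $B_1\oplus C_1\oplus G_1\cong R^{m-1}$, so that $[B_0]+[C_0]\le u$ while $[B_1\oplus C_1]\le(m-1)u$.

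Next I would transport the defining isomorphism $A\oplus C\oplus C\cong B\oplus C$ (respectively $A\oplus C\oplus C\cong B\oplus C\oplus C$) across this splitting. Comparing, via Riesz refinement, the given decomposition of the domain with the pullback of $B=B_0\oplus B_1$ and $C=C_0\oplus C_1$, I split $A=A_0\oplus A_1$ together with the remaining copies of $C$, so as to produce two independent instances of the same shape: one with ambient module of class ${}\le u$, to which the base case applies, and one with ambient module of class ${}\le(m-1)u$, to which the induction hypothesis applies. The two resulting isomorphisms $A_0\oplus C_0\cong B_0$ and $A_1\oplus C_1\cong B_1$ (respectively their separativity analogues $A_i\oplus C_i\cong B_i\oplus C_i$) then recombine to the desired conclusion $A\oplus C\cong B$ (respectively $A\oplus C\cong B\oplus C$).

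The step I expect to be the main obstacle is precisely this compatible splitting. The defining isomorphism couples $A$ with \emph{two} copies of $C$, and a single application of refinement need not split those copies in the same way; some care is needed to ensure that each sub-instance is again of the exact form $A_i\oplus C_i\oplus C_i\cong B_i\oplus C_i$ (respectively $A_i\oplus C_i\oplus C_i\cong B_i\oplus C_i\oplus C_i$) rather than a mixed configuration to which neither hypothesis applies. I would address this by first passing to a common refinement of the decompositions induced on all the occurrences of $C$, shrinking the peeled-off free summand if necessary, so that the several copies of $C$ acquire matching decompositions $C=C_0\oplus C_1$ before the base and inductive hypotheses are invoked. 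The remaining points—that the ``$0$-parts'' genuinely embed as summands of $R_R$, and that the two recombined isomorphisms assemble to the intended one—are routine bookkeeping in the refinement monoid $V(R)$.
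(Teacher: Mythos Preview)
The paper does not give a self-contained argument here: it simply cites \cite[Proposition 2.8 and Corollary 2.9]{AGOP} for part (2), with part (1) following \emph{mutatis mutandis}, and it flags that the operative input in that proof is Lemma~2.7(ii) of \cite{AGOP}. So your proposal is really being compared against the AGOP argument, not against anything written out in this paper.

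Your overall framework---pass to the conical refinement monoid $V(R)$ with order unit $u=[R_R]$, and reduce the unrestricted implication to the restricted one by induction on the least $m$ with the relevant class $\le mu$---is reasonable and in the same spirit as standard order-unit reductions. But the step you yourself flag as the obstacle is a genuine gap, and the fix you sketch does not close it. Once you split $c=c_0+c_1$ and $b=b_0+b_1$, the single equation $a+2c_0+2c_1=b_0+b_1+c_0+c_1$ does \emph{not}, under Riesz refinement, decouple into two equations of the shape $a_i+2c_i=b_i+c_i$: refinement hands you a rectangular array of pieces with no mechanism forcing the two left-hand copies of $c_0$ (respectively $c_1$) to line up, nor forcing the cross-terms between the $0$- and $1$-parts to vanish. ``Passing to a common refinement of the decompositions induced on all the occurrences of $C$'' only produces a finer array with the same structural defect, and ``shrinking the peeled-off free summand'' does not advance the induction, since your induction variable is $m$ and peeling off less than a full copy of $R_R$ does not decrease it. The proof in \cite{AGOP} sidesteps exactly this issue by invoking a specific refinement-monoid lemma (their Lemma~2.7(ii)) rather than attempting to manufacture two sub-instances of the identical form; you should either appeal to that lemma directly or find a different reduction that does not require the two copies of $C$ to split compatibly.
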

\begin{proof}
Part (2) is exactly Proposition 2.8 and Corollary 2.9 from \cite{AGOP}.  (There is a minor typo in the proof, where the reference to part (i) of Lemma 2.7 should be to part (ii).)  Part (1) follows from the same argument, \emph{mutatis mutandis}.
\end{proof}

\begin{thm}\label{Thm:CornersStrongSeparativeCore}
An exchange ring $R$ is strongly separative if and only if $eRe$ is core strongly separative for each idempotent $e\in R$.
\end{thm}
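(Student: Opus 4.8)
My plan is to prove both implications by transporting module statements between $\FP(R)$ and $\FP(eRe)$ through the standard corner equivalence. The tool I would set up first is this: for an idempotent $e\in R$ one has $eRe\cong\End_R(eR)$, and the additive functor $H:=\Hom_R(eR,-)$ restricts to an equivalence between the full subcategory $\mathrm{add}(eR)\subseteq\FP(R)$ of direct summands of finite powers of $eR$ and the category $\FP(eRe)$, with quasi-inverse $T:=-\otimes_{eRe}eR$. The crucial bookkeeping point is that $H(eR)=\End_R(eR)=(eRe)_{eRe}$, so the equivalence matches $eR$ with the rank-one free $eRe$-module; being additive and reflecting isomorphisms is all I need from it.

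For the forward direction I would assume $R$ is strongly separative, fix an idempotent $e$, and take $A,B,C\in\FP(eRe)$ with $(eRe)_{eRe}\cong A\oplus C\oplus C\cong B\oplus C$. Applying $T$ yields $eR\cong T(A)\oplus T(C)\oplus T(C)\cong T(B)\oplus T(C)$ in $\FP(R)$. These are honest finitely generated projective right $R$-modules, so strong separativity of $R$ (with no restriction needed) gives $T(A)\oplus T(C)\cong T(B)$; applying $H$ and reflecting isomorphisms returns $A\oplus C\cong B$, proving $eRe$ core strongly separative. This direction uses only strong separativity of $R$ together with functoriality.

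For the converse I would assume $eRe$ is core strongly separative for every idempotent $e$ and invoke Lemma \ref{Lemma:MainRestriction}(1), which reduces the verification of strong separativity to the case where the common module $P:=B\oplus C\cong A\oplus C\oplus C$ is realized as a direct summand of $R_R$. Writing $R_R=P\oplus P'$ and letting $e$ be the projection idempotent onto $P$, I get $eR\cong P$, and since $A,B,C$ are each direct summands of $P$ they all lie in $\mathrm{add}(eR)$. Transporting along $H$ produces $(eRe)_{eRe}\cong H(A)\oplus H(C)\oplus H(C)\cong H(B)\oplus H(C)$ in $\FP(eRe)$, which is exactly the defining hypothesis of core strong separativity for $eRe$; it delivers $H(A)\oplus H(C)\cong H(B)$, and applying $T$ gives $A\oplus C\cong B$, as required.

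The hard part, and the only place where anything beyond routine functoriality is needed, is the reduction step in the converse: passing from the raw hypothesis $A\oplus C\oplus C\cong B\oplus C$, with these modules a priori arbitrary finitely generated projectives, to the situation where their common isomorphism type is a genuine direct summand $eR$ of $R_R$ attached to an idempotent $e\in R$. This is precisely where the exchange hypothesis enters through Lemma \ref{Lemma:MainRestriction}(1), letting me restrict to modules dominated by $R_R$; without it I could only realize the total as a summand of some $R^n$ and would be pushed into matrix corners $f\M_n(R)f$ rather than the corners $eRe$ named in the statement. Once the total module is identified with $eR$, the rest is the mechanical transport through $H$ and $T$ described above.
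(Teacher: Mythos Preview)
Your proof is correct and follows essentially the same route as the paper's: both directions hinge on the corner equivalence between $\mathrm{add}(eR)$ and $\FP(eRe)$, and the converse uses Lemma~\ref{Lemma:MainRestriction}(1) in exactly the way you describe to pin the total module down as some $eR$. The only cosmetic differences are that the paper handles the forward direction by citing that strong separativity passes to corners (and hence to core strong separativity), whereas you verify core strong separativity directly via transport; and the paper writes out the Morita correspondence in terms of idempotents and right ideals rather than naming the functors $H$ and $T$.
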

\begin{proof}
$(\Rightarrow)$: As pointed out on page 125 of \cite{AGOP}, strong separativity in exchange rings passes to corner rings.  Thus, these corners are also core strongly separative.

$(\Leftarrow)$: Assume all corners are core strongly separative.  Also assume that
\[
A\oplus C\oplus C\cong B\oplus C\subsetsim R_R.
\]
After replacing these modules with isomorphic copies, we may assume there are internal direct sum decompositions
\begin{equation}\label{Eq:CornerReduct}
A\oplus C\oplus C_1=B\oplus C_2=X\subseteq^{\oplus} R_R
\end{equation}
where $C\cong C_1\cong C_2$.  Write $X=eR$ for some idempotent $e\in R$.

We need a (very) little Morita theory; we include the details for completeness.  If $fR\subseteq^{\oplus} eR$ for some idempotent $f\in R$, then we can replace $f$ by $efe$, which is another idempotent with $efeR=fR$.  Thus, given summands $f_1R,f_2R\subseteq^{\oplus}eR$, for some idempotents $f_1,f_2\in R$, then
\[
f_1R\cong f_2R\ \text{ if and only if }\ ef_1eR\cong ef_2eR.
\]
The latter isomorphism is evidenced by left multiplications via two (reflexive inverse) elements of the corner ring $eRe$, by \cite[Proposition 21.20(2)]{Lam}.

There is a natural correspondence between the right ideals of $R$ contained in $eR$ and the right ideals of $eRe$; see \cite[Theorem 21.11(1)]{Lam}.  The argument from the previous paragraph shows that this correspondence respects isomorphisms on cyclic projective modules.  Applying this correspondence to \eqref{Eq:CornerReduct}, we have
\[
Ae\oplus Ce\oplus C_1e=Be\oplus C_2e=eRe,
\]
with $Ce\cong C_1e\cong C_2e$.  By the core strong separativity of $eRe$, we have $Ae\oplus Ce\cong Be$.  Applying the correspondence again, $A\oplus C\cong B$.
\end{proof}

In order to translate the previous theorem into the language of right repeaters, we first need to understand how inner inverses behave in corner rings.  Given any ring $R$, suppose that $a\in R$ is regular, with an inner inverse $a'$.  Further, suppose that $e\in R$ is an idempotent, and that $a\in eRe$.  In particular, $eae=a$ and hence $a'=(eae)'$.  Now, $a'$ might not belong to the corner ring $eRe$, but we do have
\[
a(ea'e)a=aa'a=a.
\]
Thus, $a$ is not only regular in $R$, but also in the corner ring $eRe$, with an inner inverse $ea'e\in eRe$.  Of course, conversely, if $a\in eRe$ is regular in $eRe$, then it is also regular in the overring $R$.

In other words, given any inner inverse operation $'$ on the regular elements of a ring $R$, there is a derived inner inverse operation on all of the regular elements of the corner ring $eRe$, defined by $a^{\dagger}:=ea'e$.  With this understanding in place, we can now classify strongly separative rings in terms of regularity and unit-regularity.

\begin{thm}\label{Thm:FullStronglySepOps}
Let $R$ be a ring with a distinguished inner inverse operation $'$ on its regular elements.  Then $R$ is strongly separative if and only if for each idempotent $e\in R$, for each regular element $a\in eRe$, and for each $b\in eRe$ with $(1-aa')baa'$ regular, then $(1-aa')ba$ is unit-regular in $eRe$.
\end{thm}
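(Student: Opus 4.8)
The plan is to obtain the statement by combining the two preceding characterizations. By Theorem \ref{Thm:CornersStrongSeparativeCore}, $R$ is strongly separative if and only if $eRe$ is core strongly separative for every idempotent $e\in R$. Fixing such an $e$, I would equip the corner ring $eRe$ with the derived inner inverse operation $a^{\dagger}:=ea'e$ described just before the statement, and apply Theorem \ref{Thm:ImageRepeatUnitRegCor} to $eRe$ with this operation. That result says $eRe$ is core strongly separative exactly when every special right repeater of $eRe$---each element $(e-aa^{\dagger})ba$ with $a\in eRe$ regular, $b\in eRe$, and $(e-aa^{\dagger})baa^{\dagger}$ regular in $eRe$---is unit-regular in $eRe$. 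So the whole task is to rewrite these repeaters, and their defining hypotheses, in terms of the original operation $'$.

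For the rewriting I would first record the identities $ea=a=ae$ and $eb=b=be$ coming from $a,b\in eRe$, and compute $aa^{\dagger}=a(ea'e)=aa'e$. Using $eb=b$ this yields
\[
(e-aa^{\dagger})b=eb-aa'eb=b-aa'b=(1-aa')b,
\]
hence $(e-aa^{\dagger})ba=(1-aa')ba=:x$. A direct check (using $eaa'=aa'$ and $(ba)e=ba$) shows $x\in eRe$, and since regularity transfers between $eRe$ and $R$ for elements of the corner, $x$ is regular in $eRe$ if and only if it is regular in $R$. Thus the special right repeaters of $eRe$ are exactly the elements $(1-aa')ba$ with $a,b\in eRe$.

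What remains---and what I expect to be the only genuine bookkeeping---is reconciling the three a priori different regularity hypotheses. Put $w:=(1-aa')baa'$. From $aa'a=a$ one gets $wa=(1-aa')b(aa'a)=x$ and $xa'=(1-aa')baa'=w$, so
\[
wR=xa'R\subseteq xR=waR\subseteq wR,
\]
forcing $wR=xR$; since an element is regular precisely when its image is a direct summand, $w$ is regular if and only if $x$ is. Carrying out the same computation inside $eRe$ shows $(e-aa^{\dagger})baa^{\dagger}$ is regular in $eRe$ if and only if $x$ is regular there. Hence the hypothesis of Theorem \ref{Thm:ImageRepeatUnitRegCor}, the hypothesis ``$(1-aa')baa'$ regular'' of the present statement, and the condition ``$x$ regular'' all coincide. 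Assembling the chain of equivalences gives that $R$ is strongly separative if and only if, for every idempotent $e$, every regular $a\in eRe$, and every $b\in eRe$ with $(1-aa')baa'$ regular, the element $(1-aa')ba$ is unit-regular in $eRe$, as desired. The main obstacle is precisely this matching of the two inner inverse operations and the several regularity conditions, and the identity $wR=xR$ is what dissolves it.
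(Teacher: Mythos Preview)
Your proof is correct and follows essentially the same route as the paper: both combine Theorem~\ref{Thm:CornersStrongSeparativeCore} with Theorem~\ref{Thm:ImageRepeatUnitRegCor} applied to $(eRe,\,{}^{\dagger})$, then translate between the operations $'$ and ${}^{\dagger}$ to match the special right repeaters and their regularity hypotheses. Your device of showing $wR=xR$ (and its corner analogue) to align the three regularity conditions is a minor variant of the paper's direct computation $(1-aa')ba(ea'e)R=(1-aa')baa'R$, and in fact makes the two-sided equivalence of hypotheses slightly more transparent than the paper does.
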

\begin{proof}
$(\Leftarrow)$:  Let $e\in R$ be an idempotent, and let $a,b\in eRe$ with $a$ regular and with $(1-aa')baa'$ regular.  Since $(1-aa')baa'eaa'=(1-aa')baa'aa'=(1-aa')baa'$, we have
\[
(1-aa')ba(ea'e)R=(1-aa')baa'R\subseteq^{\oplus}R_R
\]
Moreover, using the notation $a^{\dagger}=ea'e$, then since $a,b\in eRe$ we have
\[
(1-aa')ba(ea'e)=(e-a(ea'e))ba(ea'e)=(e-aa^{\dagger})baa^{\dagger}\in eRe,
\]
and hence it is regular in that corner ring.  Clearly, $(1-aa')ba=(e-aa^{\dagger})ba$.

Thus, as $a$ and $b$ range over the elements of $eRe$, subject to the fact that $a$ is regular and $(1-aa')baa'$ is regular, then the quantity $(1-aa')ba$ ranges over the special right repeaters of Proposition \ref{Prop:SpecialImageRepeaters}, but in $(eRe,\, ^{\dagger})$.  As all of these elements are unit-regular by assumption, then by Theorem \ref{Thm:ImageRepeatUnitRegCor} every corner ring of $R$ is core strongly separative. Theorem \ref{Thm:CornersStrongSeparativeCore} then implies that $R$ is strongly separative.

$(\Rightarrow)$:  Assume $R$ is strongly separative.  Then by Theorem \ref{Thm:CornersStrongSeparativeCore}, every corner ring is core strongly separative, and hence all right repeaters in those corner rings are unit-regular (in that corner).  The computation from the first paragraph of this proof, together with Proposition \ref{Prop:SpecialImageRepeaters}, shows that the elements we are considering are indeed right repeaters in the respective corner rings.
\end{proof}

\begin{cor}\label{Cor:RegStrongSepUReg}
Let $R$ be a regular ring.  Then $R$ is strongly separative if and only if for each idempotent $e\in R$, and for all $a,b\in eRe$, then $(1-aa')ba$ is unit-regular in $eRe$.
\end{cor}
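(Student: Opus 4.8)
The plan is to deduce this directly from Theorem \ref{Thm:FullStronglySepOps}, by observing that when $R$ is regular the two regularity hypotheses appearing in that theorem are automatically satisfied and therefore impose no restriction on the elements $a$ and $b$. In other words, the corollary is the specialization of Theorem \ref{Thm:FullStronglySepOps} to the regular case, where every relevant regularity clause becomes vacuous.

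First I would note that, since $R$ is regular, every element of $R$ is regular; in particular each $a \in eRe$, viewed as an element of $R$, is regular, so the distinguished inner inverse operation $'$ is defined on all of $R$ and the clause ``for each regular element $a \in eRe$'' in Theorem \ref{Thm:FullStronglySepOps} simply becomes ``for each $a \in eRe$.'' Next, for any $a,b \in eRe$ the element $(1-aa')baa'$ lies in $R$ and hence is again regular, once more because $R$ is regular. Thus the side condition ``$b \in eRe$ with $(1-aa')baa'$ regular'' reduces to ``$b \in eRe$,'' and the two universally quantified clauses of the theorem collapse into the single clause ``for all $a,b \in eRe$.''

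Finally, I would invoke Theorem \ref{Thm:FullStronglySepOps} verbatim: its conclusion is that $(1-aa')ba$ is unit-regular in $eRe$, and its proof already records the identity $(1-aa')ba = (e - aa^{\dagger})ba \in eRe$ (with $a^{\dagger} = ea'e$), so the statement is meaningful inside the corner ring. Combining this with the two observations above yields exactly the asserted equivalence. I do not anticipate any genuine obstacle here; the only point meriting care is the verification that regularity of $R$ trivializes both hypotheses of Theorem \ref{Thm:FullStronglySepOps}, which is immediate, so the bulk of the work has already been carried out in proving that theorem.
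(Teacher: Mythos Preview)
Your proposal is correct and is exactly the intended argument: the corollary is stated in the paper immediately after Theorem~\ref{Thm:FullStronglySepOps} without proof, precisely because in a regular ring the two regularity hypotheses on $a$ and on $(1-aa')baa'$ are automatic, so the theorem specializes directly to the asserted equivalence.
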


Left repeaters are defined in the obvious way, by using a left-right symmetric version of Lemma \ref{Lemma:RightRepeaterDef}.  The previous work in this section can now be repeated for left repeaters, with very little changes to the proofs.  In regular rings, elements of the form $ac(1-a'a)$ are always left repeaters, and a regular ring is core strongly separative if and only if these special left repeaters are all unit-regular.  Using the bullet points at the very end of Section \ref{Section:InnerInverses}, one can show that left repeaters and right repeaters come in reflexive inverse pairs.

In matrix rings over fields, the left repeaters and right repeaters are exactly the same elements; namely, those whose rank is no more than half the number of rows.  In the ring of column-finite matrices over a field, these classes of elements are different, as
\[
E_{2,1}+E_{4,2}+E_{6,3}+\cdots = \begin{bmatrix}0 & \phantom{\ddots} & \phantom{\ddots} & \phantom{\ddots} & \phantom{\ddots}\\
1 & 0 & \phantom{\ddots} & \phantom{\ddots} & \phantom{\ddots}\\
\phantom{\ddots}& 0 & 0 & \phantom{\ddots} & \phantom{\ddots}\\
 \phantom{\ddots}& 1 & 0 & 0 & \phantom{\ddots}\\
 \phantom{\ddots}& \phantom{\ddots}& \phantom{\ddots}& \phantom{\ddots}&  \ddots \end{bmatrix}
\]
is an right repeater but not a left repeater.

Having dealt with (core) strong separativity, we now can tackle separativity.

\begin{definition}\label{Def:CoreSep}
A ring $R$ is \emph{core separative} when
\[
R_R\cong A\oplus C\oplus C\cong B\oplus C\oplus C\ \Rightarrow\ A\oplus C\cong B\oplus C,
\]
for any $A,B,C\in \FP(R)$.  In case $R$ is an exchange ring, then it is \emph{separative} when
\[
A\oplus C\oplus C\cong B\oplus C\oplus C\ \Rightarrow\ A\oplus C\cong B\oplus C,
\]
for all $A,B,C\in \FP(R)$.
\end{definition}

This definition of separativity is equivalent to the one in the introduction---for \emph{exchange rings}---by using \cite[Lemma 2.1 and p.\ 115]{AGOP}.

As we have no examples of exchange rings that are not separative, we do not know whether the two classes of rings in Definition \ref{Def:CoreSep} are distinct for exchange rings.  However, we do have:

\begin{example}
There is a core separative ring $R$, and some modules $A,B,C\in \FP(R)$ satisfying $A\oplus C\oplus C\cong B\oplus C\oplus C$, and yet $A\oplus C\not\cong B\oplus C$.
\end{example}
\begin{proof}
From page 114 of \cite{AGOP}, there is a commutative (additive) monoid $M$ that is conical, has the refinement property, but is not separative (where separativity is a direct translation of Definition \ref{Def:CoreSep} to refinement monoids via \cite[Lemma 2.1]{AGOP}).  Let $M'=M\sqcup \{\infty\}$, which is still conical, has the refinement property, and fails separativity.

From the paragraph after Theorem 3.4 in \cite{BergmanDicks}, we know that $M'\cong V(R)$, for some ring $R$, where $\infty$ corresponds to $[R_R]$.  To show that $R$ is a core separative ring, it suffices to show that the corresponding condition in $M'$ holds, with $[R_R]$ replaced by $\infty$.  Thus, suppose that $\infty=a+c+c=b+c+c$ for some $a,b,c\in M'$.  Then either $c=\infty$ or $a=b=\infty$.  In any case, $a+c=b+c$.
\end{proof}

Just as right repeaters instantiate the premise of (the implication defining) core strong separativity, the following elements instantiate the premise of core separativity.

\begin{definition}
Let $R$ be a ring, and let $x\in R$.  We say that $x$ is a (two-sided) \emph{repeater} if it is both a left and right repeater.
\end{definition}

Describing a special collection of repeaters in general rings is slightly more complicated than it was for right repeaters.  This is done as follows:

\begin{prop}\label{Prop:SpecialRepeaters}
Let $R$ be a ring, and fix an inner inverse operation $'$ on its regular elements.  Asume that $a\in R$ is regular, and that $c\in R$ is such that $a'ac(1-a'a)$ is regular.  Setting $y:=ac(1-a'a)$, further suppose that $b\in R$ is such that $(1-yy')byy'$ is regular.  Then
\[
x:=(1-yy')bac(1-a'a)
\]
is a repeater.
\end{prop}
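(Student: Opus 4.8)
The plan is to verify the two halves of ``repeater'' separately: the right-repeater property comes essentially for free from Proposition \ref{Prop:SpecialImageRepeaters}, while the left-repeater property requires a short additional argument built on the left-repeater status of the auxiliary element $y$. As a first step I would record that status. Since $a$ is regular and $a'ac(1-a'a)$ is regular by hypothesis, the left-right dual of Proposition \ref{Prop:SpecialImageRepeaters} applies (interchanging $aa'$ with $a'a$ and left with right multiplication) and shows that $y=ac(1-a'a)$ is a left repeater; in particular $y$ is regular and, by the left-right dual of Lemma \ref{Lemma:RightRepeaterDef}, $\im(y)\subsetsim^{\oplus}\ker(y)$.

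Next I would obtain the right-repeater property of $x$. Writing $x=(1-yy')by$, I apply Proposition \ref{Prop:SpecialImageRepeaters} with $y$ in the role of $a$ and the same $b$. This is legitimate because $y$ is regular and $(1-yy')byy'$ is regular by hypothesis, so the proposition immediately yields that $x$ is a right repeater. In particular $x$ is regular, so both $\im(x)$ and $\ker(x)$ are direct summands of $R_R$.

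The remaining, and main, task is to show that $x$ is also a left repeater, i.e.\ (again by the left-right dual of Lemma \ref{Lemma:RightRepeaterDef}) that $\im(x)\subsetsim^{\oplus}\ker(x)$. The key observation is that the proof of Proposition \ref{Prop:SpecialImageRepeaters}, applied to $y$, does not merely give $\im(x)\subsetsim^{\oplus}\coker(x)$: it exhibits $\im(x)=xR$ as isomorphic to a direct summand of $\im(y)=yR$, namely the summand produced by the flip-and-double trick (cf.\ Remark \ref{Remark:ExplicitIsos}). Thus $\im(x)\subsetsim^{\oplus}\im(y)$. Combining this with $\im(y)\subsetsim^{\oplus}\ker(y)$ from the first step and using transitivity of $\subsetsim^{\oplus}$ gives $\im(x)\subsetsim^{\oplus}\ker(y)$. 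Finally $\ker(y)\subseteq\ker(x)$, since $yr=0$ forces $xr=(1-yy')byr=0$, and both are direct summands of $R_R$; by the modular law a direct summand of $R_R$ contained in a second direct summand is a direct summand of the second, so $\ker(y)$ is a direct summand of $\ker(x)$. Chaining once more yields $\im(x)\subsetsim^{\oplus}\ker(y)\subsetsim^{\oplus}\ker(x)$, hence $\im(x)\subsetsim^{\oplus}\ker(x)$, as required.

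I expect the only genuine obstacle to be the left-repeater half, and within it the need to extract the stronger conclusion $\im(x)\subsetsim^{\oplus}\im(y)$ from the \emph{proof} of Proposition \ref{Prop:SpecialImageRepeaters} rather than from its bare statement; once that refinement and the summand-of-a-summand fact are in place, the two chains of $\subsetsim^{\oplus}$ close the argument. It then remains only to assemble both halves to conclude that $x$ is a (two-sided) repeater.
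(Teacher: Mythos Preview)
Your argument is correct, but it diverges from the paper's proof in the left-repeater half. Both proofs handle the right-repeater property identically, via Proposition~\ref{Prop:SpecialImageRepeaters} applied to $x=(1-yy')by$. For the left-repeater property, however, the paper does \emph{not} pass through the left-repeater status of $y$ or the chain $\im(x)\subsetsim^{\oplus}\im(y)\subsetsim^{\oplus}\ker(y)\subseteq^{\oplus}\ker(x)$. Instead it applies the flip-and-double trick once more, directly to the idempotent $x'x$: writing $x'x=\bigl(x'(1-yy')ba\bigr)\cdot\bigl(a'ac(1-a'a)\bigr)$, the flipped idempotent lands in $a'aR$, and since $(1-a'a)\cdot a'a=0$ one has $a'aR\subseteq\ker(x)$, giving $x'xR\subsetsim^{\oplus}\ker(x)$ in a single stroke.

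The paper's route is shorter and entirely self-contained: one observation ($a'aR\subseteq\ker(x)$) plus one flip-and-double, with no need for the modular-law summand argument or for reaching back into the \emph{proof} of Proposition~\ref{Prop:SpecialImageRepeaters} to extract $\im(x)\subsetsim^{\oplus}\im(y)$. Your route is a bit more structural---it makes visible how the left-repeater property of $y$ feeds into that of $x$---but it costs you the extra dependence on internal details of an earlier proof rather than just its statement, plus the transitivity and summand-of-a-summand bookkeeping.
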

\begin{proof}
By the left-right symmetric version of Proposition \ref{Prop:SpecialImageRepeaters}, we know that $y$ is a left repeater, and hence regular.  Thus, by a second application of Proposition \ref{Prop:SpecialImageRepeaters}, we know that $x=(1-yy')by$ is an right repeater.  It just remains to show that $x$ is a left repeater.

Notice that $a'aR\subseteq \ker(x)$.  By the ``flip-and-double trick'' (mentioned in the proof of Proposition \ref{Prop:SpecialImageRepeaters}, but this time with $c:=x'(1-yy')ba$ and $d:=a'ac(1-a'a)$, so that $cd=x'x$), we then know that
\[
x'xR\cong a'ac(1-a'a)x'xx'(1-yy')baR\subseteq^{\oplus}\ker(x)=(1-x'x)R. \qedhere
\]
\end{proof}

We can modify Proposition \ref{Prop:BasicStatementAboutRightRepeaters} and Theorem \ref{Thm:ImageRepeatUnitRegCor} to work for core separativity, by essentially the same proofs.

\begin{thm}\label{Thm:SpecialRepeaters2}
Let $R$ be a ring, and fix a distinguished inner inverse operation $'$ on the set of regular elements of $R$.  Then $R$ is core separative if and only if all repeaters in $R$ are unit-regular, if and only if the repeaters from \textup{Proposition \ref{Prop:SpecialRepeaters}} are all unit-regular.
\end{thm}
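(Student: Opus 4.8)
The plan is to establish the cycle of implications: core separative $\Rightarrow$ every repeater is unit-regular $\Rightarrow$ the special repeaters of Proposition~\ref{Prop:SpecialRepeaters} are unit-regular $\Rightarrow$ core separative. The first two steps adapt the discussion preceding Proposition~\ref{Prop:BasicStatementAboutRightRepeaters}, and the third adapts the proof of Theorem~\ref{Thm:ImageRepeatUnitRegCor}, in each case upgrading the one-sided arguments to the two-sided setting.

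First I would set up the dictionary between repeaters and the premise of core separativity (Definition~\ref{Def:CoreSep}). Let $x$ be any repeater, with inner inverse $x'$, and put $C:=\im(x)$. Applying Lemma~\ref{Lemma:RightRepeaterDef}(3) to the internal splitting $R_R=xx'R\oplus(1-xx')R$ lets me write $R_R=C\oplus A\oplus C'$ with $C'\cong C$, while the left-right dual of that lemma, combined with the symmetry of isomorphic idempotents, lets me write the splitting $R_R=x'xR\oplus(1-x'x)R$ as $R_R=x'xR\oplus B\oplus C''$ with $x'xR\cong C$, $C''\cong C$, and $B\oplus C''=\ker(x)$. Thus $R_R\cong A\oplus C\oplus C\cong B\oplus C\oplus C$, so every repeater instantiates the premise; moreover $A\oplus C\cong(1-xx')R\cong\coker(x)$ and $B\oplus C\cong\ker(x)$, so the conclusion $A\oplus C\cong B\oplus C$ holds exactly when $\coker(x)\cong\ker(x)$, that is, exactly when $x$ is unit-regular. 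This gives the first implication. The second implication is immediate, since the elements produced by Proposition~\ref{Prop:SpecialRepeaters} are repeaters.

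The substance of the proof is the third implication. Assume the special repeaters are unit-regular and suppose $R_R\cong A\oplus C\oplus C\cong B\oplus C\oplus C$; passing to isomorphic copies I may take internal decompositions $R_R=A\oplus C\oplus C_1=B\oplus C_2\oplus C_3$ with $C\cong C_1\cong C_2\cong C_3$. The goal is to build one special repeater $x=(1-yy')bac(1-a'a)$ whose unit-regularity forces $A\oplus C\cong B\oplus C$. In the first stage I would manufacture the left repeater $y=ac(1-a'a)$ by taking $a$ to be the idempotent projection of $R_R$ onto $B\oplus C_2$ along $C_3$ (so that $a'=a$ and $(1-a'a)R=C_3$) and taking $c$ to carry $C_3$ isomorphically onto $C_2$. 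A direct computation should then give $\ker(y)=B\oplus C_2\cong B\oplus C$ and $\im(y)=C_2\cong C$, with $y$ regular; in particular $a'ac(1-a'a)=y$ is regular, so the hypotheses of Proposition~\ref{Prop:SpecialRepeaters} on $a$ and $c$ are met.

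In the second stage I would choose an inner inverse $y'$ and, exactly as $b$ is chosen in the proof of Theorem~\ref{Thm:ImageRepeatUnitRegCor} (namely with $b=(1-yy')byy'$), an endomorphism $b$ for which $(1-yy')b$ carries $\im(y)$ isomorphically onto $C_1$; then $(1-yy')byy'$ has direct-summand image, supplying the last hypothesis of Proposition~\ref{Prop:SpecialRepeaters}. Setting $x:=(1-yy')by=(1-yy')bac(1-a'a)$, the bookkeeping should yield $\im(x)=C_1\cong C$, $\ker(x)=\ker(y)=B\oplus C_2\cong B\oplus C$, and $\coker(x)\cong R_R/C_1\cong A\oplus C$. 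As $x$ is a special repeater it is unit-regular by hypothesis, so $\ker(x)\cong\coker(x)$ and hence $B\oplus C\cong A\oplus C$, the required conclusion. The step I expect to be the main obstacle is this two-stage construction: one must choose $a,c$ and then $y',b$ so as to respect the algebraic normal form of Proposition~\ref{Prop:SpecialRepeaters}, satisfy all three regularity side-conditions, and---most delicately---ensure that the outer factor $1-yy'$ followed by $b$ neither enlarges the kernel (so that $\ker(x)=\ker(y)$) nor collapses the image (so that $\im(x)$ is a full copy of $C$ with complement $A\oplus C$). Checking that these module-level prescriptions are realized simultaneously by honest ring elements, and that the left- and right-repeater structures interact as claimed, is the crux; the rest is formal.
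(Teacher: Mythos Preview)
Your cycle of implications is set up correctly, and the first two steps are fine. The third step, however, has a genuine gap: the special repeater $x$ you construct never interacts with the module $A$, so its unit-regularity cannot possibly yield $A\oplus C\cong B\oplus C$.

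Concretely, with $a$ the projection onto $B\oplus C_2$ along $C_3$, you get $\im(y)=C_2$, hence $yy'R=C_2$ and $(1-yy')R$ is a complement of $C_2$ in $R_R$, so $(1-yy')R\cong B\oplus C_3\cong B\oplus C$ (not $A\oplus C$). Since $\im\bigl((1-yy')b\bigr)\subseteq(1-yy')R$, you cannot arrange $\im(x)=C_1$ as claimed; at best $\im(x)$ is a copy of $C$ inside $(1-yy')R\cong B\oplus C$, and choosing that copy carefully gives $\coker(x)\cong yy'R\oplus B'\cong C\oplus B$. Combined with $\ker(x)=\ker(y)\cong B\oplus C$, unit-regularity of $x$ asserts only the tautology $B\oplus C\cong B\oplus C$. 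The decomposition $R_R=A\oplus C\oplus C_1$ is never used.

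The repair (and the adaptation the paper intends) is to let $a$ bridge the two decompositions: take $a$ with $\ker(a)=B\oplus C_3$ sending $C_2$ isomorphically onto $C$, so that $(1-a'a)R=B\oplus C_3$ while $\im(a)=C$ lies in the $A$-decomposition. Choose $c$ to kill $B$ and carry $C_3$ isomorphically to $a'aR$; then $y=ac(1-a'a)$ has $\ker(y)=a'aR\oplus B\cong C\oplus B$ and $\im(y)=C$, and the regularity hypothesis on $a'ac(1-a'a)$ is met. Now $(1-yy')R\cong A\oplus C_1$; write it as $A'\oplus C_1'$ and choose $b$ exactly as in the proof of Theorem~\ref{Thm:ImageRepeatUnitRegCor}, sending $C$ isomorphically to $C_1'$ and killing $(1-yy')R$. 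Then $x=(1-yy')by$ has $\ker(x)=\ker(y)\cong B\oplus C$ and $\im(x)=C_1'$, so $\coker(x)\cong C\oplus A'\cong A\oplus C$, and unit-regularity of $x$ gives the desired conclusion.
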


The following lemma will help us give a simpler set of repeaters in regular rings.

\begin{lemma}\label{Lemma:SpecialRegularRepeaters}
If $(R,\, ')$ is a regular ring, then any right multiple of a right repeater is a right repeater. Similarly, left multiples of left repeaters are left repeaters.

Consequently, for any $a,b,c\in R$, then $(1-aa')bac(1-a'a)$ is a repeater.
\end{lemma}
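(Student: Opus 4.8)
The plan is to establish the two multiplicative stability statements first and then read off the ``consequently'' clause by two applications. For the last assertion, I would factor $(1-aa')bac(1-a'a)$ in two ways. Writing it as $\bigl[(1-aa')ba\bigr]\cdot\bigl[c(1-a'a)\bigr]$ exhibits it as a \emph{right} multiple of $(1-aa')ba$, which is a right repeater by Proposition \ref{Prop:SpecialImageRepeaters} (every regularity hypothesis there is automatic in a regular ring). Writing it instead as $\bigl[(1-aa')b\bigr]\cdot\bigl[ac(1-a'a)\bigr]$ exhibits it as a \emph{left} multiple of $ac(1-a'a)$, which is a left repeater by the left--right symmetric version of that proposition. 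Once the two stability claims are in hand, the first factorization forces the element to be a right repeater and the second forces it to be a left repeater, so it is a repeater.

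It thus remains to prove that in a regular ring any right multiple $xr$ of a right repeater $x$ is again a right repeater; the statement for left multiples of left repeaters is then the left--right dual. First, $xr$ is regular since $R$ is regular, so $\im(xr)=xrR$ is a direct summand of $R_R$ and $xr$ is a legitimate candidate. The crux is a summand-within-a-summand observation: since $xrR\subseteq xR$ and both are direct summands of $R_R$, the modular law gives an internal decomposition $xR=xrR\oplus M$, where $M:=xR\cap K$ for any complement $R=xrR\oplus K$. Combining this with a complement $R=xR\oplus N$ in which $N\cong\coker(x)$ yields $R=xrR\oplus M\oplus N$, and hence $\coker(xr)\cong M\oplus\coker(x)$.

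Finally I would feed in the repeater hypothesis on $x$. Because $x$ is a right repeater, $\im(x)=xR$ is isomorphic to a direct summand of $\coker(x)$, say $\coker(x)\cong xR\oplus Q$. Substituting into the previous isomorphism and using $xR\cong xrR\oplus M$ gives
\[
\coker(xr)\cong M\oplus \coker(x)\cong M\oplus xR\oplus Q\cong \im(xr)\oplus(M\oplus M\oplus Q),
\]
so $\im(xr)\subsetsim^{\oplus}\coker(xr)$, which by Lemma \ref{Lemma:RightRepeaterDef}(1) says exactly that $xr$ is a right repeater. The analogous argument over left modules (replacing $xR$ by $Rx$, right multiplication by left multiplication, and $\coker$ by its left-hand counterpart) proves the claim for left repeaters. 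The main obstacle I anticipate is purely bookkeeping: one must take care that $\subsetsim^{\oplus}$ is manipulated only up to isomorphism, and that the summand-within-a-summand step is genuinely justified by the modular law rather than merely assumed. Everything else is routine direct-sum chasing valid in any regular ring.
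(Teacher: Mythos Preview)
Your proof is correct and follows essentially the same approach as the paper's. The paper argues slightly more tersely---it observes that $xyR\subseteq^{\oplus}xR$ (summand within a summand), hence $xyR$ is isomorphic to a summand of $(1-xx')R$, and then notes that $(1-xx')R$ is itself a summand of the complement $M\oplus(1-xx')R$ of $xyR$---thereby using characterization~(2) of Lemma~\ref{Lemma:RightRepeaterDef} rather than your explicit cokernel computation via characterization~(1); but the underlying idea and the factorizations for the ``consequently'' clause are identical.
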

\begin{proof}
Assume that $x\in R$ is a right repeater, so $xR$ is isomorphic to a summand in $(1-xx')R$.  Let $y\in R$ be arbitrary.  Now, since $xy$ is regular, we know that $xyR\subseteq^{\oplus}R_R$.  Thus, $xyR\subseteq^{\oplus}xR$.  Hence, $xyR$ is isomorphic to a summand in $(1-xx')R$.  As $(1-xx')R$ is a direct summand in a complement of $xyR$, we have $xyR$ is isomorphic to a summand in a complement of $xyR$.

The second sentence of the lemma follows by symmetry.

Since $(1-aa')bac(1-a'a)$ is a right multiple of the right repeater $(1-aa')ba$, it is also a right repeater.  By symmetry, it is also a left repeater.
\end{proof}

\begin{cor}\label{Cor:CoreSepReg}
A regular ring $R$ is core separative if and only if $(1-aa')bac(1-a'a)$ is unit-regular, for every $a,b,c\in R$.
\end{cor}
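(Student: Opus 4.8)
The plan is to prove both implications by tying the family $\{(1-aa')bac(1-a'a)\}$ to repeaters that are already understood. The forward direction is immediate: if $R$ is core separative, then by Theorem \ref{Thm:SpecialRepeaters2} every repeater in $R$ is unit-regular, and by the consequence recorded in Lemma \ref{Lemma:SpecialRegularRepeaters} each element $(1-aa')bac(1-a'a)$ is a repeater, so each such element is unit-regular. The substance is in the converse, where working in a \emph{regular} ring will let us keep the simpler left idempotent $(1-aa')$ in place of the $(1-yy')$ appearing in Proposition \ref{Prop:SpecialRepeaters}.

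For the converse, I would assume that every $(1-aa')bac(1-a'a)$ is unit-regular and verify core separativity directly from Definition \ref{Def:CoreSep}, mirroring the instantiation in the proof of Theorem \ref{Thm:ImageRepeatUnitRegCor}. Starting from $R_R\cong A\oplus C\oplus C\cong B\oplus C\oplus C$, fix internal decompositions $R_R=A\oplus C_1\oplus C_2=B\oplus C_3\oplus C_4$ with $C\cong C_i$ for all $i$. The aim is to produce a single element of the designated form whose unit-regularity yields $A\oplus C_1\cong B\oplus C_3$. I would construct $a,b,c\in R=\End(R_R)$ with each acting as an isomorphism between copies of $C$: choose $a$ with $\ker(a)=B\oplus C_3$ acting as an isomorphism $C_4\xrightarrow{\sim}C_2$, so $\im(a)=C_2$; choose $c$ killing $B$ and sending $C_3\xrightarrow{\sim}C_4$; and choose $b$ sending $C_2$ isomorphically onto a prescribed copy of $C$ inside $(1-aa')R$. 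Reading $x:=(1-aa')bac(1-a'a)$ right to left as a composite of endomorphisms, one checks that $(1-a'a)$ projects onto $\ker(a)=B\oplus C_3$, that $x$ annihilates $B\oplus C_4$, and that $x$ carries $C_3$ isomorphically onto its image. Thus $\ker(x)\cong B\oplus C$ and $\coker(x)\cong A\oplus C$, and the hypothesis that $x$ is unit-regular gives $\ker(x)\cong\coker(x)$, i.e. $A\oplus C\cong B\oplus C$, as required.

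The hard part will be control of the cokernel, which dictates the careful choices above. The idempotent $(1-aa')$ projects onto the \emph{fixed} complement $(1-aa')R$ determined by the inner inverse operation, not onto a complement of our choosing, so $\coker(x)\cong A\oplus C$ is not automatic. To pin this down I would arrange $\im(a)$ to be one of the original summands $C_2$; then $A\oplus C_1$ is a visible complement of $\im(a)$, and since any two complements of a direct summand are isomorphic, $(1-aa')R\cong A\oplus C_1\cong A\oplus C$. Decomposing this complement as $A'\oplus C_1'$ with $A'\cong A$ and $C_1'\cong C$, I would route $\im(x)$ exactly onto the summand $C_1'$, so that $\coker(x)=R/C_1'\cong C_2\oplus A'\cong A\oplus C$ from the honest decomposition $R=C_2\oplus A'\oplus C_1'$.

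A final point I would emphasize is that keeping all of $a,b,c$ as isomorphisms between copies of $C$—with $A$ and $B$ appearing only inside kernels and complements—is precisely what prevents the construction from presupposing the cancellation $A\oplus C\cong B\oplus C$ that we are trying to establish; that isomorphism is delivered only at the very end, by the unit-regularity of $x$. This is where regularity is genuinely used: all the summands $C_i$, $A'$, $C_1'$ needed to place $\im(a)$, $(1-aa')R$, and $\im(x)$ exist as honest direct summands, which is what allows the single family $\{(1-aa')bac(1-a'a)\}$ to suffice.
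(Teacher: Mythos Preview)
Your approach matches the paper's: the paper also proves the converse by constructing specific $a,b,c$ so that $x=(1-aa')bac(1-a'a)$ has $\ker(x)\cong B\oplus C$ and $\coker(x)\cong A\oplus C$, mirroring the proof of Theorem~\ref{Thm:ImageRepeatUnitRegCor}.

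There is one slip worth correcting. You assert that $x$ annihilates $B\oplus C_4$, but the rightmost factor $(1-a'a)$ need not kill $C_4$: it kills $a'aR$, the complement of $\ker(a)=B\oplus C_3$ determined by the \emph{fixed} inner inverse operation, and there is no reason for $a'aR$ to coincide with your chosen $C_4$. The repair is immediate and in the same spirit as your own cokernel analysis: since $R=(B\oplus C_3)\oplus a'aR$ and $x$ is injective on $C_3$ while killing $B$, one has $\ker(x)=B\oplus a'aR$; and because $a'aR$ and $C_4$ are both complements of $B\oplus C_3$ in $R_R$, they are isomorphic, giving $\ker(x)\cong B\oplus C$ as required. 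You already handle the mirror-image issue correctly on the cokernel side, carefully working inside $(1-aa')R=A'\oplus C_1'$ rather than inside $A\oplus C_1$.
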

\begin{proof}
The repeaters in the statement of Theorem \ref{Thm:SpecialRepeaters2} can be replaced with the simpler ones from Lemma \ref{Lemma:SpecialRegularRepeaters}, using essentially the same proof.
\end{proof}

Our work with corner rings also generalizes from the strongly separative case to the separative case.  In particular, (again, by modifying proofs in trivial ways, as needed) we have:

\begin{thm}
An exchange ring $R$ is separative if and only if $eRe$ is core separative for each idempotent $e\in R$.
\end{thm}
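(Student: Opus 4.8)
The plan is to mirror the proof of Theorem~\ref{Thm:CornersStrongSeparativeCore} almost verbatim, the only structural change being that one extra copy of the cancelled summand $C$ now appears on each side.

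For the forward implication I would invoke the standard fact that separativity in exchange rings descends to corner rings; this is recorded in \cite{AGOP} in the same spirit as the remark on page~125 used for the strongly separative case. Granting that each $eRe$ is separative, core separativity is then automatic: comparing the conclusion of separativity with Definition~\ref{Def:CoreSep}, core separativity for $eRe$ is just separativity restricted to those instances where the common sum is the whole module $(eRe)_{eRe}$, hence a formal weakening.

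For the reverse implication, assuming every corner is core separative, I would first apply Lemma~\ref{Lemma:MainRestriction}(2) to reduce the goal to the implication
\[
A\oplus C\oplus C\cong B\oplus C\oplus C\subsetsim R_R\ \Rightarrow\ A\oplus C\cong B\oplus C.
\]
Replacing the modules by isomorphic internal copies, I would arrange internal decompositions
\[
A\oplus C_0\oplus C_1=B\oplus C_2\oplus C_3=X\subseteq^{\oplus}R_R,
\]
with $C_0\cong C_1\cong C_2\cong C_3\cong C$, and write $X=eR$ for an idempotent $e\in R$. The Morita correspondence between right ideals of $R$ contained in $eR$ and right ideals of $eRe$, via \cite[Theorem~21.11(1)]{Lam}, respects isomorphism of cyclic projective modules exactly as in the proof of Theorem~\ref{Thm:CornersStrongSeparativeCore}, and so transports the decompositions to
\[
Ae\oplus C_0e\oplus C_1e=Be\oplus C_2e\oplus C_3e=eRe,
\]
with all four summands $C_ie$ mutually isomorphic. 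Setting $C':=C_0e$ exhibits $(eRe)_{eRe}$ in the form $Ae\oplus C'\oplus C'\cong Be\oplus C'\oplus C'$, so core separativity of $eRe$ yields $Ae\oplus C'\cong Be\oplus C'$, that is, $Ae\oplus C_0e\cong Be\oplus C_0e$. Transporting back across the correspondence gives $A\oplus C_0\cong B\oplus C_0$, and since $C_0\cong C$ this is the desired $A\oplus C\cong B\oplus C$.

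The two points that genuinely require care, rather than being purely mechanical, are: confirming the descent of separativity to corners cited in the forward direction, and checking that the hypothesis of core separativity is correctly met once an extra copy of $C$ is present on each side—here one must use that $C_0e,\dots,C_3e$ are all isomorphic to a single module $C'$, not merely pairwise isomorphic, so that $(eRe)_{eRe}$ really is presented as $Ae\oplus C'\oplus C'\cong Be\oplus C'\oplus C'$ with one common $C'$. I expect neither to be a serious obstacle, since the correspondence argument is identical to the one already executed for Theorem~\ref{Thm:CornersStrongSeparativeCore} and the additional summand alters nothing structurally.
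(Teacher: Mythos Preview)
Your proposal is correct and follows exactly the approach the paper indicates: the paper does not spell out a proof but simply says to modify the proof of Theorem~\ref{Thm:CornersStrongSeparativeCore} ``in trivial ways, as needed,'' and your argument carries out precisely those modifications, including the use of Lemma~\ref{Lemma:MainRestriction}(2) in place of (1). Your attention to the point that all four $C_ie$ are isomorphic to a single module, so that the core separativity hypothesis on $eRe$ is genuinely instantiated, is well placed and correct.
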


We thus arrive at a new characterization of separative regular rings.

\begin{cor}\label{Cor:RegSepUReg}
A regular ring $(R,\, ')$ is separative if and only if for every idempotent $e\in R$, and for every $a,b,c\in eRe$, the element
\[
(1-aa')bac(1-a'a)
\]
is unit-regular in $eRe$.
\end{cor}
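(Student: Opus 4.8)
The plan is to obtain this corollary by combining the theorem that an exchange ring is separative if and only if each corner $eRe$ is core separative with Corollary \ref{Cor:CoreSepReg}, which characterizes core separativity of a regular ring through unit-regularity of the elements $(1-aa')bac(1-a'a)$. This is exactly the route by which Corollary \ref{Cor:RegStrongSepUReg} was derived from Theorem \ref{Thm:FullStronglySepOps}, only now in the separative (two-sided repeater) setting rather than the strongly separative (right repeater) one. Since every regular ring is an exchange ring, the corner theorem applies immediately and reduces the statement to expressing core separativity of each corner $eRe$ in the desired elementwise form.

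First I would record that each corner $eRe$ is again a regular ring. As observed in the discussion preceding Theorem \ref{Thm:FullStronglySepOps}, any $a\in eRe$ that is regular in $R$ is regular in $eRe$ with inner inverse $ea'e$; because $R$ is regular this makes every element of $eRe$ regular, so that $a^{\dagger}:=ea'e$ is a genuine inner inverse operation on the regular (hence all) elements of $eRe$, whose identity is $e$. Applying Corollary \ref{Cor:CoreSepReg} to the regular ring $(eRe,\,{}^{\dagger})$ then yields that $eRe$ is core separative if and only if $(e-aa^{\dagger})bac(e-a^{\dagger}a)$ is unit-regular in $eRe$ for all $a,b,c\in eRe$.

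The final step is to verify that, for $a,b,c\in eRe$, the intrinsic corner expression $(e-aa^{\dagger})bac(e-a^{\dagger}a)$ coincides with the ambient expression $(1-aa')bac(1-a'a)$ written in the statement. This is the same bookkeeping already performed in the proof of Theorem \ref{Thm:FullStronglySepOps}: using $ae=a=ea$ and $ce=c$, one checks $(e-aa^{\dagger})ba=(1-aa')ba$ and $c(e-a^{\dagger}a)=c(1-a'a)$, and in particular the product lands in $eRe$ even though it is displayed with the ambient $1$. I expect this elementwise identification to be the only place demanding any care — keeping straight which identity ($1$ versus $e$) and which inner inverse ($'$ versus ${}^{\dagger}$) is in force — but it is entirely routine and strictly parallel to the strongly separative case. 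With that identity established, the unit-regularity condition from Corollary \ref{Cor:CoreSepReg} applied to every corner matches the condition in the statement verbatim, giving both implications at once and completing the proof.
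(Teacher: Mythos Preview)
Your proposal is correct and follows precisely the route the paper intends: combine the corner theorem (an exchange ring is separative iff every $eRe$ is core separative) with Corollary~\ref{Cor:CoreSepReg} applied to each corner, using the derived inner inverse $a^{\dagger}=ea'e$ and the bookkeeping identity $(e-aa^{\dagger})bac(e-a^{\dagger}a)=(1-aa')bac(1-a'a)$ for $a,b,c\in eRe$. The paper does not write out a separate proof for this corollary, but your argument is exactly the parallel of the strongly separative case it points to.
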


By passing to a corner ring, if necessary, we thus see that the separativity problem is equivalent to deciding whether or not every element in a regular ring of the form
\[
(1-aa')bac(1-a'a)
\]
is unit-regular.

\section{Viewing algebras as varieties}\label{Sec:FirstVariety}

To begin this section we quickly review the notion of a \emph{variety}, in the sense of universal algebra. A variety is a collection of algebras defined using two pieces of information.  First, we need a set of operations, called the \emph{signature}.  For instance, the signature of rings is usually given by $\{+,\cdot, -,0,1\}$, where $+,\cdot$ are binary operations, $-$ is a unary operation, and $0,1$ are nullary operations (i.e., constants).  Second, we need a set of universal identities satisfied under these operations.  The standard identities for rings, each universally quantified over the variables, are:
\begin{eqnarray*}
x+y=y+x & & \text{(commutative law for addition)}\\
(x+y)+z=x+(y+z) & & \text{(associative law for addition)}\\
x+0=x && \text{($0$ is a right additive identity)}\\
x+(-x)=0& & \text{(negation yields a right additive inverse)}\\
(x\cdot y)\cdot z=x\cdot(y\cdot z)& & \text{(associative law for multiplication)}\\
x\cdot 1=x & & \text{($1$ is a right multiplicative identity)}\\
1\cdot x=x & & \text{($1$ is a left multiplicative identity)}\\
x\cdot(y+z)=(x\cdot y)+(x\cdot z)& & \text{(left distributivity of multiplication over addition)}\\
(y+z)\cdot x=(y\cdot x)+(z\cdot x)& & \text{(right distributivity of multiplication over addition)}
\end{eqnarray*}
The list of identities will sometimes vary, since different combinations of identities will give the same consequences.  For instance, we didn't include the fact $0$ is a left additive identity because this follows from commutativity of addition in conjunction with $0$ being a right additive identity.  Also, the signature of rings will sometimes vary, as we can (for instance) avoid referring to $0$ if we like.  It is often more important to pick a convenient representation, rather than attempting to minimize the axioms and signature.

The variety of rings is the collection of $6$-tuples $(R,+,\cdot,-,0,1)$, where $R$ is a set, and the other symbols are operations on $R$ satisfying the identities above.  We will denote this variety as {\bf Ring}.

Varieties form a category, where a morphism is a map that respects each of the operations.  For instance, a morphism in the variety of rings $(R,+,\cdot,-,0,1)\to (S,+,\cdot,-,0,1)$ is a map $f\colon R\to S$ where
\[
f(x+y)=f(x)+f(y),\ f(x\cdot y)=f(x)\cdot f(y),\ f(-x)=-f(x),\ f(0)=0,\ \text{and}\ f(1)=1.
\]
We refer to $f$ as a {\bf Ring}-homomorphism when emphasizing its connection to the variety.

Beginning students in algebra sometimes learn a definition of rings where, say, the axiom of ``right multiplicative identity'' is given as $\exists y\, \forall x\, (x\cdot y=x)$.  However, when defining varieties we are not allowed to use existential quantification, nor implications, nor other Boolean connectives.  Thus, varieties are sometimes referred to as \emph{equational classes}.  Fortunately, the axioms of rings are amenable to being rewritten as pure identities, after adjoining a few additional operations, such as the nullary operation $1$.

The universal formula
\[
x\cdot y=0\Rightarrow (x=0 \lor y=0),
\]
which asserts that there are no nonzero zero-divisors, is not a pure identity.  Moreover, unlike for rings, the class of domains cannot be made into a variety, due to:

\begin{thm}[{Birkhoff's theorem; see \cite[Theorem 9.6.1]{BergmanBook}}]
Given any signature, a collection of algebras under that signature forms a variety if and only if the collection is closed under homomorphic images, subalgebras, and direct products.
\end{thm}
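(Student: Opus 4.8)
The plan is to prove the two implications separately. The forward direction (a variety is closed under $H$, $S$, $P$) is routine, so most of the work goes into the converse, which is the substance of the theorem (the HSP theorem).

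For the easy direction I would assume $\mathcal{K}$ is the class of all algebras satisfying some set $\Sigma$ of identities, and check directly that $\mathcal{K}$ is closed under the three operations. A single identity $s(x_1,\dots,x_n)=t(x_1,\dots,x_n)$ survives passage to a subalgebra, since the universally quantified variables then merely range over a subset; it survives direct products, since both sides are computed coordinatewise and equality in each factor forces equality in the product; and it survives a surjective homomorphism, since every tuple in the image lifts and the homomorphism commutes with the term operations. Each verification is a one-line computation, and closure of $\mathcal{K}$ follows because $\mathcal{K}$ is the intersection over all identities in $\Sigma$.

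For the converse, suppose $\mathcal{K}$ is closed under $H$, $S$, $P$. I would let $\Sigma=\mathrm{Id}(\mathcal{K})$ be the set of all identities holding throughout $\mathcal{K}$ and let $\mathcal{W}$ be the variety they define. By construction $\mathcal{K}\subseteq\mathcal{W}$, and the entire content is the reverse inclusion $\mathcal{W}\subseteq\mathcal{K}$. The engine is the relatively free algebra: for a set $X$ of variables, form the absolutely free (term) algebra $T(X)$ together with the congruence $\theta$ defined by $(s,t)\in\theta$ exactly when $s=t$ lies in $\Sigma$, and set $F(X)=T(X)/\theta$. The crucial lemma is that $F(X)\in\mathcal{K}$. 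One proves this by observing that $\theta$ is precisely the intersection of the kernels of all homomorphisms $T(X)\to B$ with $B\in\mathcal{K}$, so the induced map of $F(X)$ into the product of those target algebras is injective; closure under $P$ puts the product in $\mathcal{K}$, and closure under $S$ then puts $F(X)$ in $\mathcal{K}$. With the free algebra in hand I would finish quickly: given $A\in\mathcal{W}$, choose a generating set (for instance $X=A$) to present $A$ as a surjection $\phi\colon T(X)\to A$. If $(s,t)\in\theta$, then $s=t$ is an identity of $\mathcal{K}$, hence holds in $A$ because $A\in\mathcal{W}$, and evaluating along $\phi$ gives $\phi(s)=\phi(t)$; thus $\theta\subseteq\ker\phi$, so $\phi$ factors through a surjection $F(X)\to A$. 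Since $F(X)\in\mathcal{K}$, closure under $H$ yields $A\in\mathcal{K}$.

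The main obstacle is the free-algebra lemma, and within it a set-theoretic point that must be handled carefully: the members of $\mathcal{K}$ form a proper class, so one cannot literally take the product over all of them. The remedy is to bound cardinalities. The separation of elements of $F(X)$ is already detected by homomorphisms into $\mathcal{K}$-algebras of cardinality at most $\lvert T(X)\rvert$ (any witness that $\phi(s)\neq\phi(t)$ can be replaced by the subalgebra it generates, still in $\mathcal{K}$ by $S$), and representatives of such algebras form a genuine set. Checking that this restricted product still has kernel exactly $\theta$, rather than some coarser congruence, is the step I would be most careful about; once that is secured, the argument closes.
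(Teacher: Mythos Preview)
Your outline is a correct and standard proof of Birkhoff's HSP theorem, including the usual cardinality bound needed to make the product a set. However, there is nothing to compare against: the paper does not prove this statement at all. It is quoted as a known background result, with a citation to \cite[Theorem 9.6.1]{BergmanBook}, and is used only to motivate why certain classes (like domains) cannot be varieties. So your proposal is fine as a self-contained argument, but the paper's ``proof'' is simply the reference.
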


Powerful results like Birkhoff's theorem make the study of varieties especially interesting.  Another such result, which we will make use of in this paper, is the following:

\begin{thm}[{\cite[Theorem 9.3.7]{BergmanBook}}]
Given any variety and given any set $X$, there is a free algebra on the set $X$ in the variety.
\end{thm}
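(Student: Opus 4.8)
The plan is to build the free algebra as a quotient of the absolutely free (term) algebra, following the classical route in universal algebra. Fix a signature $\Sigma$ and a variety $\mathcal{V}$ defined by a set of identities, and fix the set $X$. First I would construct the \emph{term algebra} $T(X)$: its underlying set consists of all formal terms built up from the elements of $X$ and the nullary operations by repeated application of the operation symbols, and the operations act on $T(X)$ syntactically (an $n$-ary symbol $f$ sends $(t_1,\dots,t_n)$ to the term $f(t_1,\dots,t_n)$). The key feature of $T(X)$ is its universal property among \emph{all} algebras of the given signature: any function $X\to A$ extends uniquely to a homomorphism $T(X)\to A$, by structural recursion on terms. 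At this stage $T(X)$ need not lie in $\mathcal{V}$, since it satisfies no identities beyond the trivial ones.

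Next I would cut $T(X)$ down to an algebra in $\mathcal{V}$. The natural candidate is a quotient $F:=T(X)/\theta$, where $\theta$ is the smallest congruence on $T(X)$ whose quotient satisfies every identity of $\mathcal{V}$; concretely, declare $(s,t)\in\theta$ precisely when $h(s)=h(t)$ for every algebra $A\in\mathcal{V}$ and every homomorphism $h\colon T(X)\to A$. One checks routinely that $\theta$ is a congruence. To see that $F\in\mathcal{V}$, and to avoid quantifying over a proper class of algebras, I would use a cardinality bound: the subalgebra of any $\mathcal{V}$-algebra generated by the image of $X$ has cardinality at most $\kappa:=\max(|X|,|\Sigma|,\aleph_0)$, since it is exhausted by the (at most $\kappa$-many) interpretations of terms. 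Hence it suffices to range over a \emph{set} $S$ of pairs $(A,g)$ with $A\in\mathcal{V}$ generated by $g(X)$ and $|A|\le\kappa$. Forming the product $P:=\prod_{(A,g)\in S}A$ and letting $F$ be the subalgebra generated by the diagonal image of $X$, we get $F\in\mathcal{V}$ because identities pass to direct products (componentwise) and to subalgebras (by restriction)---exactly the closure properties furnished by Birkhoff's theorem.

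Finally I would verify the universal property of $F$ together with the map $\iota\colon X\to F$. Given any $A\in\mathcal{V}$ and any function $g\colon X\to A$, uniqueness of an extending homomorphism $\overline{g}\colon F\to A$ is immediate because $\iota(X)$ generates $F$. For existence, replace $A$ by the subalgebra generated by $g(X)$; this has cardinality at most $\kappa$, so $(A,g)$ (up to isomorphism) is one of the indices in $S$, and the corresponding coordinate projection $P\to A$ restricts to the desired homomorphism $F\to A$ sending $\iota(x)\mapsto g(x)$.

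The main obstacle is not any single computation but the set-theoretic bookkeeping: a priori one wants to test equality of terms against the proper class of all $\mathcal{V}$-algebras, and to form a product over that class, neither of which is legitimate. The cardinality bound $\kappa$ is what makes the argument go through, reducing every such quantification to a quantification over a genuine set of test algebras; everything else is a routine verification that the syntactic operations on terms are compatible with the congruence and with the universal property.
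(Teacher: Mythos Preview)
The paper does not prove this statement at all: it is quoted as a background fact with a citation to \cite[Theorem 9.3.7]{BergmanBook}, and no argument is given. So there is no ``paper's proof'' to compare against; you have supplied a proof where the paper simply appeals to the literature.

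Your sketch is the standard construction and is essentially correct. One presentational wrinkle: you define $F$ twice, first as the quotient $T(X)/\theta$ and then, a few lines later, as the subalgebra of $P=\prod_{(A,g)\in S}A$ generated by the diagonal image of $X$. These are canonically isomorphic (the diagonal map $T(X)\to P$ has kernel exactly $\theta$, once the cardinality bound shows that testing against $S$ suffices), but as written the reader may wonder whether the two $F$'s are the same object. It would be cleaner either to stick with the quotient description throughout and argue directly that $T(X)/\theta$ satisfies each identity of $\mathcal{V}$ (substituting terms for variables and using the definition of $\theta$), or to drop $\theta$ entirely and work only with the subalgebra-of-a-product model. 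Either route avoids the apparent redefinition.
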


With this short introduction out of the way, we now connect these ideas to the work in Section \ref{Section:NewInnerInverses}.  We can view the class of regular rings with a distinguished inner inverse operation as a variety, denoted {\bf Reg}, by using the same signature as for rings, except that we adjoin an extra unary operation $'$.  The identities are the same as for rings, except that we adjoin one extra identity
\[
aa'a=a,
\]
which forces $a'$ to be an inner inverse for $a$.  Unless context suggests otherwise, other varieties in this section are understood to include the signatures and identities of {\bf Reg}.  Whenever possible, we will use exactly the same signature, and there is a special name for this situation.

\begin{definition}
Given a variety {\bf V}, a \emph{subvariety} is another variety with the same signature as {\bf V} that is subject to the identities of {\bf V}, but there may also be additional identities.
\end{definition}

For example, by adjoining the one new identity
\[
a'aa'=a'
\]
we have a subvariety (of {\bf Reg}) where the distinguished inner inverse operation always yields a reflexive inverse.  Similarly, unit regular rings (or, more properly, regular rings with a distinguished unit inner inverse) form a subvariety by adjoining the one new identity
\[
a'a''a''a'=1.
\]

Thinking of the symbols $a$ and $b$ as parameters, we can put $x:=(1-aa')ba$.  Then the core strongly separative regular rings form a subvariety by adjoining the one new identity
\[
x'x''x''x'=1,
\]
which forces the special right repeaters of Corollary \ref{Cor:SpecialImageRepeatersUnitReg} to be unit-regular.  Similarly, putting $x:=(1-aa')bac(1-a'a)$, and asserting $x'x''x''x'=1$, we have defined the core separative regular rings as a subvariety, according to Corollary \ref{Cor:CoreSepReg}.

Our next goal is to explain how to similarly varietize the strongly separative, as well as separative, regular rings.  We need a uniform method for referring to idempotents.  This is done, as follows:

\begin{lemma}\label{Lemma:RegularIdempotents}
Given a regular ring $(R,\, ')$, the set of idempotents of $R$ equals the set of elements of the form $e:=aa'+a(1-aa')$ for some $a\in R$.
\end{lemma}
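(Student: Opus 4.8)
The plan is to prove a set equality, so I would establish the two inclusions separately. The statement claims that in a regular ring $(R,\,')$, the idempotents are exactly the elements of the form $e := aa' + a(1-aa')$ for $a \in R$. First I would check that every such $e$ is genuinely an idempotent, and then conversely that every idempotent arises this way for a suitable choice of $a$.

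For the forward inclusion, I would take an arbitrary $a \in R$ and compute $e^2$ directly, where $e = aa' + a(1-aa')$. The key algebraic facts to exploit are that $aa'$ is an idempotent and that $aa'a = a$, which gives $aa' \cdot a(1-aa') = a(1-aa')$ and also lets me simplify the cross terms. Expanding $e^2 = (aa' + a - aaa')(aa' + a - aaa')$ and repeatedly applying $aa'a = a$ to collapse the products should return $e$. This is a routine but slightly delicate calculation; the one thing to be careful about is that $a'a$ is \emph{not} assumed to equal $aa'$, so I must only use the identity $aa'a = a$ and the idempotency of $aa'$, never commuting $a$ past $a'$.

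For the reverse inclusion, I would start with an arbitrary idempotent $f \in R$ and produce an $a$ with $aa' + a(1-aa') = f$. The natural guess is to take $a$ built from $f$ itself. Since $f$ is idempotent, $f$ is regular with $f f f = f$, so $f$ is its own inner inverse; the subtlety is that the \emph{distinguished} operation $'$ need not satisfy $f' = f$. So I would instead choose $a$ so that the formula forces the answer to be $f$ regardless of what $a'$ is. A clean choice is to note that if I can arrange $aa' = f$ and $a(1-aa') = 0$, then $e = f$; the second condition says $a = a \cdot aa' = af$, i.e. $a \in Rf$-compatible, while $aa' = f$ requires $\im(a) = fR$. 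Taking $a$ to be an element with $aa' = f$ and $af = a$ (for instance $a = fa_0$ for an appropriate regular $a_0$, or simply verifying that $a := f$ together with the given operation can be adjusted) should work, but since $'$ is fixed I expect to need the flexibility that $a'$ is some inner inverse of the chosen $a$, not a freely chosen one.

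The main obstacle I anticipate is precisely this rigidity of the distinguished operation $'$ in the reverse inclusion: I cannot simply declare $a' := a$, so I must pick $a$ cleverly enough that the \emph{prescribed} value of $a'$ still yields $aa' = f$ and $a(1-aa') = 0$. The likely resolution is to observe that the term $a(1-aa')$ vanishes exactly when $aa'a = a$ forces $a = aaa' \cdot(\text{something})$; more concretely, since every idempotent is regular and $aa'$ can be made to equal any idempotent isomorphic to $\im(a)$, I would choose $a := f$ and then verify that $ff' + f(1-ff') = f$ follows from $fff = f$ together with $ff'f = f$, checking that the terms involving $f'$ cancel. I expect the identity $ff'f = f$ to make $f(1-ff') = f - ff'f \cdot(\dots)$ collapse, and the cross-cancellation to deliver $f$ on the nose. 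Confirming that this cancellation goes through for the fixed operation $'$, rather than a convenient one, is where I would spend the most care.
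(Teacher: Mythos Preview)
Your approach is the same as the paper's: show $e^2=e$ for the forward direction, and take $a=f$ for the reverse. The forward computation goes exactly as you describe, using $(1-aa')a=0$.

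For the reverse inclusion, however, you are making life much harder than necessary. With $a=f$ idempotent, simply expand:
\[
ff' + f(1-ff') = ff' + f - f\cdot ff' = ff' + f - f^2 f' = ff' + f - ff' = f,
\]
using only $f^2=f$. The terms involving $f'$ cancel outright, so the distinguished inner inverse operation plays no role whatsoever, and there is no ``rigidity'' to overcome. You do not need $ff'f=f$, nor any argument about images or isomorphic idempotents; the identity is purely formal once you recognize that $aa' + a(1-aa') = a + aa' - a\cdot aa'$ and that the last two terms coincide when $a$ is idempotent. All the anticipated care in your final paragraph can be replaced by this one line.
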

\begin{proof}
First, since $aa'a=a$, we have $(1-aa')a=0$.  Thus
\[
e^2=aa'aa'+aa'a(1-aa') + a(1-aa')aa' + a(1-aa')a(1-aa')=aa'+a(1-aa')=e.
\]
Conversely, let $f\in R$ be an arbitrary idempotent.  Taking $a=f$, then
\[
e=ff'+f(1-ff')=ff'+f-ff'=f.
\]
So the expression represents all idempotents.
\end{proof}

We also must describe unit-regularity in corner rings.  The following result of Lam and Murray from \cite{LamMurray} is exactly what is needed to transfer that information back to the original ring.

\begin{lemma}\label{Lemma:ShiftBackCorner}
Let $R$ be a ring, let $e\in R$ be an idempotent, and let $x\in eRe$.  Then $x$ is unit-regular in $eRe$ if and only if $x+(1-e)$ is unit-regular in $R$.
\end{lemma}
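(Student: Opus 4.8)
The plan is to reduce both unit-regularity statements to the module-theoretic criterion recorded in Section~\ref{Section:InnerInverses} (a regular element is unit-regular exactly when its kernel and cokernel are isomorphic) and then to match the relevant kernels and cokernels across the corner correspondence already exploited in the proof of Theorem~\ref{Thm:CornersStrongSeparativeCore}. Writing $y := x + (1-e)$, I would first observe that $x$ is regular in $eRe$ if and only if $y$ is regular in $R$: if $u \in eRe$ satisfies $xux = x$ then $W := u + (1-e)$ satisfies $yWy = y$, and conversely if $ywy = y$ in $R$ then $u := ewe$ satisfies $xux = x$ in $eRe$ (both verifications are short block computations using $ex = xe = x$ and $e(1-e) = 0$). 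Since unit-regularity forces regularity, I may assume from the outset that $x$ (equivalently $y$) is regular and fix a single inner inverse $u \in eRe$ of $x$, together with $W := u + (1-e)$, which is then an inner inverse of $y$.

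The heart of the argument is one short computation. Setting $p := xu$ and $q := ux$ (idempotents of $eRe$), one checks that $yW = p + (1-e)$ and $Wy = q + (1-e)$, so that $1 - yW = e - p$ and $1 - Wy = e - q$. Consequently, as right $R$-modules, $\ker(y) = (e-q)R$ and $\coker(y) \cong (1-yW)R = (e-p)R$; while the identical bookkeeping inside $eRe$ gives $\ker_{eRe}(x) = (e-q)(eRe)$ and $\coker_{eRe}(x) \cong (e-p)(eRe)$ as right $eRe$-modules. Thus the two kernels and the two cokernels are governed by the very same pair of idempotents $e-q$ and $e-p$, the only difference being whether one multiplies them out against $R$ or against $eRe$.

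Finally I would invoke the unit-regularity criterion on each side: $y$ is unit-regular in $R$ exactly when $(e-q)R \cong (e-p)R$, and $x$ is unit-regular in $eRe$ exactly when $(e-q)(eRe) \cong (e-p)(eRe)$. These two conditions are equivalent by the isomorphism-preserving correspondence between cyclic summands of $eR$ and cyclic summands of $eRe$ established (via \cite[Theorem~21.11(1) and Proposition~21.20]{Lam}) in the proof of Theorem~\ref{Thm:CornersStrongSeparativeCore}, applied to the idempotents $e-q$ and $e-p$ of $eRe$. I expect this last step to be the main obstacle: one must ensure that the correspondence transfers the isomorphism $(e-q)R \cong (e-p)R$ back and forth to $(e-q)(eRe) \cong (e-p)(eRe)$ in \emph{both} directions and not merely one way, which is exactly the Morita-theoretic point that the earlier proof handles and that I would cite rather than re-derive. (For the forward implication alone there is also a quicker route, bypassing modules entirely: if $u$ is a \emph{unit} inner inverse of $x$ in $eRe$, then $w := u + (1-e)$ is directly checked to be a unit of $R$ with inverse $u^{-1} + (1-e)$ satisfying $ywy = y$.)
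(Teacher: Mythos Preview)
Your proof is correct. Note, however, that the paper does not actually prove this lemma: it simply attributes the result to Lam and Murray \cite{LamMurray} and moves on. So there is no ``paper's own proof'' to compare against.

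That said, your argument stands on its own and is well suited to the paper's internal toolkit. The reduction to the module-theoretic criterion (kernel isomorphic to cokernel) is exactly the right move, and your block computation showing $1-yW=e-p$ and $1-Wy=e-q$ is clean. The final step---that $(e-p)R\cong (e-q)R$ as $R$-modules if and only if $(e-p)(eRe)\cong (e-q)(eRe)$ as $eRe$-modules---does follow from the Morita-type correspondence you cite, and both directions are genuinely available: since $e-p,\,e-q\in eRe$, any pair $a,b$ witnessing isomorphism of the idempotents (via $ab=e-p$, $ba=e-q$) can be taken in $(e-p)R(e-q)$ and $(e-q)R(e-p)$, hence in $eRe$, so the same pair works over either ring. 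Your parenthetical shortcut for the forward direction (take a unit inner inverse $u$ of $x$ in $eRe$ and set $w=u+(1-e)$) is also correct and is essentially how Lam and Murray handle that half.
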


Now, using three parameters $a,b,e$, where $e$ is an idempotent, set
\[
x_e:=(1-eae(eae)')ebeae +(1-e).
\]
A ring is strongly separative exactly when all $x_e$ are unit-regular in $R$, by Corollary \ref{Cor:RegStrongSepUReg}.  So, we may varietize strongly separative regular rings by merely asserting
\[
x_e'x_e''x_e''x_e'=1.
\]
Replacing $x_e$ with the four parameter expression
\begin{equation}\label{Eq:xe}
x_e:=(1-eae(eae)')ebeaece(1-(eae)'eae) +(1-e),
\end{equation}
and asserting the same relation, we have varietized the separative regular rings, according to Corollary \ref{Cor:RegSepUReg}.

To end this section, we mention three useful applications of these varieties.  First, they point us towards a concrete (but somewhat complicated) regular ring that could help to answer the separativity problem.

\begin{thm}\label{Thm:ThreeGensNotSepIfAny}
If there is a nonseparative regular ring, then the free {\bf Reg}-algebra on three \textup{(}or more\textup{)} generators is such a ring.
\end{thm}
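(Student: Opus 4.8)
The plan is to exhibit a single element of the free \textbf{Reg}-algebra that witnesses the failure of separativity, by pulling back a witness from an arbitrary nonseparative regular ring along the universal map. Let $F$ denote the free \textbf{Reg}-algebra on a generating set containing three distinguished generators $x_1,x_2,x_3$ (if there are more generators, they will play no role). Since every algebra in \textbf{Reg} is by definition a regular ring, $F$ is itself regular. The candidate witness is
\[
W:=(1-x_1x_1')x_2x_1x_3(1-x_1'x_1)\in F,
\]
which by Lemma \ref{Lemma:SpecialRegularRepeaters} is a repeater. I would show that $W$ is not unit-regular in $F$, which by Corollary \ref{Cor:CoreSepReg} shows that $F$ is not core separative, and hence (taking $e=1$ in Corollary \ref{Cor:RegSepUReg}) not separative.

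First I would produce a concrete witness in an ordinary regular ring. Suppose some regular ring is nonseparative. By the reduction recorded after Corollary \ref{Cor:RegSepUReg}---that is, by Corollary \ref{Cor:RegSepUReg} applied to a suitable corner, which is again a regular ring---there is a regular ring $T$, equipped with an inner inverse operation $'$, and elements $a,b,c\in T$ for which
\[
w:=(1-aa')bac(1-a'a)
\]
is not unit-regular in $T$. Viewing $T$ as an object of \textbf{Reg}, with its inner inverse operation chosen so that the inner inverse assigned to $a$ is exactly the one appearing in $w$, the universal property of the free algebra yields a \textbf{Reg}-homomorphism $\phi\colon F\to T$ with $\phi(x_1)=a$, $\phi(x_2)=b$, and $\phi(x_3)=c$ (sending any further generators to $0$). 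Because a \textbf{Reg}-homomorphism respects the operation $'$, we have $\phi(x_1')=\phi(x_1)'=a'$, and therefore $\phi(W)=w$.

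Next I would invoke the fact that unit-regularity passes forward along \textbf{Reg}-homomorphisms. If $W$ were unit-regular in $F$, there would be a unit $v\in F$ with $WvW=W$; applying $\phi$ gives $w\,\phi(v)\,w=w$, while $\phi(v)$ is a unit in $T$ since $\phi(v)\phi(v^{-1})=\phi(v^{-1})\phi(v)=1$. Thus $w$ would be unit-regular in $T$, contradicting the choice of $w$. Hence $W$ is not unit-regular in $F$, completing the argument. Each step is routine once the framework is in place; the one point demanding care---and the only real obstacle---is the bookkeeping with the distinguished operation $'$: one must fix the inner inverse structure on the witness ring $T$ so that $\phi(x_1')$ equals the particular inner inverse of $a$ used to form $w$, since the expression $(1-aa')bac(1-a'a)$ genuinely depends on that choice. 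As any inner inverse operation on $T$ makes $T$ an object of \textbf{Reg}, such a choice is always available, so this causes no difficulty.
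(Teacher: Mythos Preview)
Your argument is correct and follows essentially the same route as the paper: pass to a corner to obtain a regular ring $T$ with $a,b,c$ such that $(1-aa')bac(1-a'a)$ is not unit-regular, use the universal property of the free \textbf{Reg}-algebra to get a \textbf{Reg}-homomorphism hitting $a,b,c$, and pull back the failure of unit-regularity to the corresponding repeater in the free algebra. Your added remarks on choosing the inner inverse operation on $T$ compatibly and on handling extra generators by sending them to $0$ are reasonable elaborations of points the paper leaves implicit.
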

\begin{proof}
Assume there is a nonseparative regular ring $(R,\, ')$.  After passing to a corner if necessary, then by Corollary \ref{Cor:RegSepUReg} there are three elements $a,b,c\in R$ such that the repeater $x:=(1-aa')bac(1-a'a)$ is not unit-regular in $R$.

Let $S$ be the free {\bf Reg}-algebra on three generators $p,q,r$.  There is a unique {\bf Reg}-homomorphism $\varphi\colon S\to R$ determined by sending $p,q,r$ respectively to $a,b,c$.  Fixing $y:=(1-pp')qpr(1-p'p)\in S$, then $\varphi(y)=x$.  Now $y$ is not unit-regular in $S$, since $x$ is not unit-regular in $R$.  But $y$ is a repeater in $S$, so $S$ is not separative.
\end{proof}

Second, they bring uniformity and explicitness to constructions.  For example, it was known by combining \cite[Theorem 3.4]{AGOPEarly} and \cite[Theorem 2.10]{AGOR} that:

\begin{prop}\label{Prop:CharacterizeExchangeSeparativeRings}
An exchange ring $R$ is separative if and only if for each idempotent $e\in R$, then each regular matrix in $\M_{2}(eRe)$ can be reduced to a diagonal matrix by a finite number of elementary row and column operations over the ring $eRe$.
\end{prop}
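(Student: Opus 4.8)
The plan is to present this as a synthesis: combine the corner-ring reduction of separativity recorded above with the two cited results, organizing everything around the dictionary between regular $2\times 2$ matrices and direct-sum decompositions of the rank-two free module. The central observation I would set up first is that a regular matrix $M\in\M_2(eRe)$ is exactly an endomorphism of $eRe\oplus eRe$; an elementary row (resp.\ column) operation is left (resp.\ right) multiplication by an elementary, hence invertible, matrix, i.e.\ pre- or post-composition with an automorphism of $eRe\oplus eRe$; and a diagonal reduction $PMQ=\mathrm{diag}(d_1,d_2)$ expresses $M$, up to such automorphisms, as a matrix whose image and kernel split along the two coordinate axes. Thus diagonal reducibility of $M$ is precisely a statement about how $\im(M)$, $\ker(M)$, and $\coker(M)$ sit relative to a coordinate decomposition of $(eRe)^2$, which is the form in which a cancellation of finitely generated projectives can be read off.

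For the forward implication I would argue as follows. Suppose $R$ is separative. Separativity of an exchange ring is inherited by its corners (see \cite{AGOP}), so every $eRe$ is again a separative exchange ring. I would then invoke \cite[Theorem 2.10]{AGOR}, the diagonalization theorem that ties separativity to the reducibility of regular matrices by elementary row and column operations, and apply it to the regular elements of $\M_2(eRe)$. The only thing to verify is that the diagonalization machinery of \cite{AGOR} is available over $eRe$: this is automatic when $R$ is regular, and in the merely-exchange case it is exactly why the statement restricts to regular matrices, since those are the elements to which the machinery speaks.

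For the reverse implication I would feed the matrix hypothesis into \cite[Theorem 3.4]{AGOPEarly}. Assume that over every corner $eRe$ each regular matrix in $\M_2(eRe)$ diagonally reduces. Fixing $e$, I would instantiate the premise of core separativity by realizing three modules $A,B,C\in\FP(eRe)$ with $A\oplus C\oplus C\cong B\oplus C\oplus C$ as coordinate summands of $(eRe)^2$ and encoding the situation as a single regular $2\times 2$ matrix; a diagonal reduction of that matrix then produces, through the underlying elementary operations, the isomorphism $A\oplus C\cong B\oplus C$. Hence each $eRe$ is core separative, and the corner characterization of separativity established above upgrades this to separativity of $R$. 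The role of \cite[Theorem 3.4]{AGOPEarly} is to make this translation between the existence of a diagonal reduction and the existence of the desired module isomorphism precise, and to certify that a single $2\times 2$ reduction suffices to witness the cancellation.

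I expect the reverse implication to be the main obstacle, and within it the issue of sufficiency: full separativity is a statement about arbitrary $A,B,C\in\FP(R)$, hence about $n\times n$ data for all $n$, whereas the hypothesis only controls $2\times 2$ matrices over corners. Two points carry the weight. First, the reduction to corners must be faithful, so that isomorphisms of cyclic projective $eRe$-modules correspond exactly to elementary row and column manipulations of a matrix over $eRe$; this is the same Morita-style correspondence between right ideals contained in $eR$ and right ideals of $eRe$ used in the corner arguments above, and it guarantees that a module cancellation is genuinely witnessed by, and only by, a diagonal reduction. Second, one must choose the idempotent $e$ so that the three given modules become coordinate summands inside a single $(eRe)^2$, which is what allows the apparently narrow $2\times 2$ condition to capture general separative cancellation. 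Pinning down this choice of $e$, and checking that the particular elementary operations returned by the reduction realize the intended isomorphisms rather than merely some equivalence of matrices, is where the combined arguments of \cite{AGOPEarly} and \cite{AGOR} do their real work.
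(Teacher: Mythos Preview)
The paper does not actually prove this proposition: it is presented as a known fact, introduced with ``it was known by combining \cite[Theorem 3.4]{AGOPEarly} and \cite[Theorem 2.10]{AGOR} that,'' and no further argument is given. Your proposal invokes exactly the same two citations, so at the level of strategy you are aligned with the paper; indeed you supply far more scaffolding than the paper does.

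That said, your elaboration of the reverse implication has a genuine gap that you yourself flag but do not close. You propose to realize modules $A,B,C$ with $A\oplus C\oplus C\cong B\oplus C\oplus C\cong eRe$ ``as coordinate summands inside a single $(eRe)^2$'' and to encode the situation as a single regular $2\times 2$ matrix whose diagonal reduction witnesses $A\oplus C\cong B\oplus C$. But $A\oplus C\oplus C$ is a three-summand decomposition of the rank-one module $eRe$, not of $(eRe)^2$, and you give no mechanism for producing the matrix or for reading the desired isomorphism off a diagonal form. You are also imprecise about which direction \cite[Theorem 3.4]{AGOPEarly} supplies: the actual content needed is that separativity is \emph{equivalent} to diagonal reducibility of regular square matrices by invertible matrices (this handles both implications at once for the passage between module cancellation and diagonalization), with \cite[Theorem 2.10]{AGOR} then upgrading the invertible matrices to products of elementaries. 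Your sketch instead routes the reverse direction through core separativity of corners plus the paper's own corner theorem, which is circuitous and leaves the matrix-encoding step unjustified. In the end you defer the hard step to ``where the combined arguments of \cite{AGOPEarly} and \cite{AGOR} do their real work'' --- which is precisely the paper's position, so the extra narrative neither adds to nor detracts from the paper's one-line appeal to the literature.
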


We show, in Section \ref{Section:6NowWorks}, that for a (core) separative regular ring $R$, then each matrix $A\in \M_2(R)$ needs only three row operations and three column operations to diagonally reduce.  Moreover, these operations are explicitly definable via regular ring expressions, treating the entries of $A$ as parameters, as long as the inner inverse operation satisfies the new identity used to varietize the core separative regular rings.  More precisely, we only need the unit-regularity of exactly one repeater, easily defined in terms of the entries of $A$.  Similarly, the $2\times 1$ matrices over a (core) strongly separative ring can be explicitly diagonally reduced using the unit-regularity of only one right repeater.

Third, these varieties can be used to strengthen results in the literature, while giving added evidence that there are nonseparative regular rings.  Here is one example.  Fix a field $F$, and let $n\in \Z_{\geq 1}$ be a parameter that we allow to vary.  Recall that in Section \ref{Section:InnerInverses} we mentioned the fact, appearing in \cite[Proposition 2.15]{OR}, that there is no uniformly defined derived operation that is a unit inner inverse operation for all of the rings $\M_n(F)$, in terms of a fixed (but arbitrary) inner inverse operation $'$ on $\M_n(F)$.  The proof shows more; there is no way to diagonally reduce all $2\times 1$ matrices uniformly over these rings.  But, as we mentioned at the end of the previous paragraph, there is a uniform way to diagonally reduce $2\times 1$ matrices in the variety of core strongly separative regular rings.  Thus, there is no uniform way to view $(\M_n(F),\, ')$ as an element of that variety under some derived operation.

In other words, given access to a distinguished (but arbitrary) inner inverse $'$ on $\M_n(F)$, and given a matrix $x\in \M_n(F)$ that we know satisfies ${\rm rank}(x)\leq n/2$ because we also have access to an explicit isomorphism from $xR$ to a summand of $(1-xx')R$, then we cannot uniformly define an isomorphism $(1-x'x)R\to (1-xx')R$ from this information.   The separativity problem reduces to the question of whether we could define such an isomorphism uniformly when given just one additional small piece of information; namely, that $(1-x'x)R$ contains an isomorphic copy of $xR$.  Frankly, this seems unlikely.

There are other properties of separative regular rings, besides the cancellation condition
\[
A\oplus 2C\cong B\oplus 2C\ \Rightarrow\ A\oplus C\cong B\oplus C,
\]
that allow for varietization.  Indeed, as noted on pages 398--399 of \cite{OR}, one can varietize the class of rings where $2\times 2$ matrices are reducible to diagonal matrices, by introducing sixteen new diagonal reduction operations.  Then, after adjoining an inner inverse operation and after forcing the diagonal reduction identities to hold in corner rings, one has an alternative varietal definition of separative regular rings.  Our work in Section \ref{Section:6NowWorks} describes those sixteen operations using derived expressions.

\section{Varieties of exchange rings}\label{Sec:SecondVariety}

Much of the work from the previous section likewise allows us to define a variety of separative exchange rings.  We first make exchange rings a variety, {\bf Exch}.  One way this can be done is by using the description of exchange rings found in \cite{KLN}, as follows.  Adjoin to the signature of rings a unary operation $^{s}$, and adjoin to the identities of rings the equality
\begin{equation}\label{Eq:SuitableEq}
(1-a^s a)(1+(1-a)a^s)=0.
\end{equation}
The element $a^s$ is called a \emph{suitabilizer} for $a$, and the theory of such elements was first studied in \cite{KLN}, although suitabilizers had appeared implicitly in previous papers.  Think of the suitabilizer map as a replacement for the inner inverse operation in regular rings.

Next, we need to show that idempotents and regular elements can be uniformly described in exchange rings.

\begin{lemma}\label{Lemma:ExchangeRegular}
Let $R$ be an exchange ring with suitabilizer map $^s$.
\begin{itemize}
\item[\textup{(1)}] The set of idempotents of $R$ equals the set of elements of the form
\[
e:=(1+a^s(1-a))a
\]
for some $a\in R$.
\item[\textup{(2)}] The set of regular elements of $R$ equals the set of elements of the form
\[
p=p(a,b):=a(1+(ba)^s(1-ba))ba
\]
for some $a,b\in R$.
\end{itemize}
\end{lemma}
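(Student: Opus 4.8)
The plan is to prove each part by a double inclusion, and to set things up so that part (2) rests on part (1). For part (1) the easy direction is that every idempotent has the displayed form: given an idempotent $f$, I would simply take $a=f$, and since $(1-f)f=f-f^2=0$ the suitabilizer term disappears, leaving $(1+f^s(1-f))f=f$ no matter what the value of $f^s$ is. The substance is the reverse inclusion, namely that $e:=(1+a^s(1-a))a$ is idempotent for every $a$.

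First I would rewrite $e=a+a^s(1-a)a$ and collect terms to obtain the factorization $1-e=(1-a^sa)(1-a)$; it then suffices to check that $1-e$ is idempotent. Next I would establish the auxiliary identity $(1-a)(1-a^sa)-1=-(1+(1-a)a^s)a$, which comes from expanding the left-hand side to $-(a+a^sa-aa^sa)$ and using $a^sa-aa^sa=(1-a)a^sa$. Combining these,
\[
(1-e)^2-(1-e)=(1-a^sa)\big[(1-a)(1-a^sa)-1\big](1-a)=-(1-a^sa)(1+(1-a)a^s)\,a(1-a),
\]
and the right-hand side is zero because $(1-a^sa)(1+(1-a)a^s)=0$ is precisely the suitabilizer identity \eqref{Eq:SuitableEq}. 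Hence $e^2=e$, which finishes part (1).

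For part (2) I would reuse part (1) on both inclusions. Put $\varepsilon:=(1+(ba)^s(1-ba))ba$, which is an idempotent by part (1) applied to the element $ba$, and note that $p(a,b)=a\varepsilon$. To see that each $p(a,b)$ is regular, I claim $q:=(1+(ba)^s(1-ba))b$ is an inner inverse: writing out $pqp$, each block $(1+(ba)^s(1-ba))\,b\,a$ equals $\varepsilon$, so $pqp$ telescopes as $a\varepsilon^3=a\varepsilon=p$. Conversely, given a regular element $x$ with inner inverse $x'$, I would take $a=x$ and $b=x'$; then $ba=x'x$ is idempotent, so by the easy direction of part (1) the middle factor collapses, $(1+(x'x)^s(1-x'x))x'x=x'x$, whence $p(x,x')=x\cdot x'x=xx'x=x$. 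This realizes every regular element in the required form.

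The hard part will be the forward inclusion of part (1): spotting the clean factorization $1-e=(1-a^sa)(1-a)$ together with the auxiliary identity that lets \eqref{Eq:SuitableEq} be applied verbatim. Once that is in hand, part (2) is essentially bookkeeping—the rewriting $p(a,b)=a\varepsilon$, the telescoping choice of $q$, and the witnesses $a=x$, $b=x'$ all reduce to the two halves of part (1).
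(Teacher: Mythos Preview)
Your proof is correct. For part (2) it coincides with the paper's argument essentially verbatim: the paper also sets $g:=(1+(ba)^s(1-ba))ba$, observes it is idempotent by part (1), takes the same $q$, and computes $pqp=ag^3=ag=p$; for the converse it likewise substitutes a regular element and an inner inverse for $a$ and $b$.

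The only difference is in the forward inclusion of part (1): the paper simply cites \cite[Proposition 3.3(C)]{KLN} for the fact that $e$ is idempotent, whereas you supply a self-contained verification by factoring $1-e=(1-a^sa)(1-a)$ and showing $(1-e)^2-(1-e)=-(1-a^sa)(1+(1-a)a^s)\,a(1-a)$, which vanishes by the suitabilizer identity \eqref{Eq:SuitableEq}. Your computation is correct and makes the lemma independent of the external reference; the paper's version is shorter but outsources the work.
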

\begin{proof}
(1) The complement of the idempotent from \cite[Proposition 3.3(C)]{KLN} is exactly $e$.  Moreover, given an idempotent $f\in R$, then taking $a=f$ we immediately get $e=f$.

(2) First, if we set $g:=(1+(ba)^s(1-ba))ba$, then this is an idempotent by part (1).  Now, taking
\[
q=q(a,b):=(1+(ba)^s(1-ba))b
\]
we find
\[
pqp=ag^3=ag=p.
\]
Thus $p$ is a regular element with inner inverse $q$.  Conversely, suppose $c$ is a regular element of $R$ with an inner inverse $d$.  Taking $a=c$ and $b=d$, then $p=c$, so it represents all regular elements.
\end{proof}

Let $(R,\, ^{s})$ be an exchange ring.  For any two elements $a,b\in R$ define $p(a,b)$ and $q(a,b)$ as in the previous lemma.  Given any regular element $x\in R$, then we can fix (once and for all) elements $a_x,b_x\in R$ such that $p(a_x,b_x)=x$.  Define an inner inverse operation $'$ on the regular elements of $R$ by the rule $x'=q(a_x,b_x)$.  Now, as we run over all pairs of elements $a,b\in R$, not only does $p(a,b)$ run over all regular elements, but $q(a,b)$ runs over enough inner inverses to cover all of the inner inverses under the operation $'$.

Next, suppose that $c\in (1-xx')Rxx'$ is regular.  Then
\[
p((1-xx')cxx',c')=p(c,c')=c.
\]
On the other hand, for any $a,b\in R$, we have $p((1-xx')axx',b)\in (1-xx')Rxx'$.  So, elements of the form $p((1-xx')axx',b)$ represent all regular elements of $(1-xx')Rxx'$.

Putting this all together, given parameters $a,b,c,d$, then
\begin{equation}
z:=p\Big(\big(1-p(a,b)q(a,b)\big)cp(a,b)q(a,b),d\Big)p(a,b)
\end{equation}
is a right repeater, by Proposition \ref{Prop:SpecialImageRepeaters}.  Moreover, as these four parameters range across the elements of a ring, the expression ranges across all the special (and possibly more) right repeaters needed in Theorem \ref{Thm:ImageRepeatUnitRegCor} under some fixed inner inverse operation on that ring.  We can assert that $z$ is unit-regular, thus varietizing the core strongly separative exchange rings, by adjoining two unary operations $u,v$, and by adding the two identities $z u(z) z = z$ and $u(a)v(a)u(a)=1$.

Similar arguments work for strongly separative exchange, core separative exchange, and separative exchange rings; the details are left to the motivated reader.

\begin{question}
Can the regular elements of an algebra in {\bf Exch} be represented by an expression using only one parameter?
\end{question}

\begin{question}
Can the varieties in this section be defined without new operations?
\end{question}

\section{Minimizing the number of elementary operations}\label{Section:MinimizeNumber}

The observation that separative exchange rings are GE-rings, by \cite[Theorem 2.8]{AGOR}, lets us diagonally reduce using elementary matrices, rather than using arbitrary units (which may be quite complicated).  The proof that separative exchange rings are GE-rings requires a large number of elementary transformations.  We will focus on the case of $2\times 2$ matrices, where this number can be significantly reduced, and in fact minimized.  An important fact that often lets us reduce this number, and which will be used repeatedly, is the normalizing equality
\begin{equation}\label{Eq:NormalizingEquality}
\begin{bmatrix}d_1 & 0\\ 0 & d_2\end{bmatrix}\begin{bmatrix}1 & x\\ 0 & 1\end{bmatrix}=\begin{bmatrix}1 & d_1xd_2^{-1}\\ 0 & 1\end{bmatrix}\begin{bmatrix}d_1 & 0\\ 0 & d_2\end{bmatrix}
\end{equation}
for invertible diagonal matrices.  Of course, there is a similar equality involving lower-triangular $2\times 2$ elementary matrices.

The rings where only a small number of elementary operations are needed to diagonally reduce $2\times 2$ matrices can be completely characterized.  Recall that a ring $R$ has \emph{stable rank $1$} when for any $a,b\in R$, if $aR+bR=R$, then there is some $x\in R$ with $a+bx\in \U(R)$.  The following theorem, tying stable rank $1$ to diagonal reduction, appears to be new.

\begin{thm}\label{Thm:DiagBy3Classification}
A ring $R$ has stable rank $1$ if and only if every matrix in ${\rm GL}_2(R)$ can be reduced to a diagonal matrix by at most three elementary operations.
\end{thm}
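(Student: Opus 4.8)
The plan is to prove the two implications separately. The forward implication is a short, explicit computation, so the substance of the argument lies in the converse, which I expect to be the main obstacle.

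For the forward direction, assume $R$ has stable rank $1$ and let $A=\left[\begin{smallmatrix} a & b\\ c & d\end{smallmatrix}\right]\in{\rm GL}_2(R)$. Writing $A^{-1}=\left[\begin{smallmatrix} p & q\\ r & w\end{smallmatrix}\right]$, the $(1,1)$-entry of $AA^{-1}=I$ reads $ap+br=1$, so $aR+bR=R$. Stable rank $1$ then supplies an $x\in R$ with $u:=a+bx\in\U(R)$. I would reduce in three steps: one column operation (adding $x$ times the second column to the first) places the unit $u$ in the $(1,1)$-position, giving $\left[\begin{smallmatrix} u & b\\ c+dx & d\end{smallmatrix}\right]$; a second column operation (subtracting $u^{-1}b$ times the first column from the second) clears the $(1,2)$-entry; and a single row operation then clears the $(2,1)$-entry. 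The result is diagonal, reached in three elementary operations, which establishes this direction (the bound being \emph{at most} three, so matrices needing fewer are covered).

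For the converse, assume every matrix in ${\rm GL}_2(R)$ can be diagonally reduced in at most three elementary operations, and let $a,b\in R$ with $aR+bR=R$, say $as+bt=1$. The goal is to produce $x$ with $a+bx\in\U(R)$. The strategy is to feed the hypothesis a suitable invertible matrix built from the data $a,b,s,t$, and then to read a unit of the form $a+bx$ off of a three-operation reduction of that matrix. Concretely, I would track a reduction $LAR=D$, where $D$ is diagonal with unit entries and $L$, $R$ are products of row- and column-transvections respectively, with $L$ and $R$ together involving at most three transvections. To organize the analysis I would use two reductions. First, transpose symmetry interchanges row and column operations, so it suffices to treat the distributions in which at least two of the operations are columns, namely (three columns) and (two columns, one row), together with the cases using fewer operations. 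Second, the normalizing equality \eqref{Eq:NormalizingEquality}, together with its lower-triangular analogue, lets me move the diagonal factor $D$ past a transvection at the cost of conjugating its off-diagonal entry by a unit; this keeps the diagonal on one side and collapses several superficially different reductions into a single normal form. In each resulting case I would match entries and exhibit the $(1,1)$-entry of an intermediate matrix as a unit lying in the coset $a+bR$, which is exactly an element $a+bx\in\U(R)$.

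The hard part will be the converse, and within it the construction of the correct invertible test matrix. Over a noncommutative ring a length-two right-unimodular row $(a,b)$ need not be completable to an invertible $2\times 2$ matrix, so one cannot simply take $A$ to have first row $(a,b)$; the relation $as+bt=1$ fails to force invertibility precisely because $ab\neq ba$ in general (over a commutative ring one would just use $\left[\begin{smallmatrix} a & b\\ -t & s\end{smallmatrix}\right]$). I therefore expect to need an always-invertible matrix that nonetheless forces the stable-rank move during any three-operation reduction, followed by careful noncommutative bookkeeping—using \eqref{Eq:NormalizingEquality} to control where units are introduced—to guarantee that the unit extracted from the terminal diagonal genuinely has the form $a+bx$. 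Verifying that the at-most-three-operation hypothesis leaves no room to avoid creating such a unit, across all admissible distributions of the operations, is the crux of the proof.
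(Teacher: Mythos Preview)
Your forward direction is correct and matches the paper. The converse outline has real gaps.

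First, the transpose-symmetry reduction is unsafe over a noncommutative ring: transposing a matrix identity over $R$ lands you in $R^{\rm op}$, so the ``at least two columns'' case for $A^{T}$ tells you about $R^{\rm op}$, not about the original pair $(a,b)$ in $R$. The paper avoids this by pushing the normalizing equality \eqref{Eq:NormalizingEquality} further than you do: if $M_1XM_2=D$ then $D=XM_2M_3$ with $M_3$ a product of the same number of elementary matrices as $M_1$, so one may assume \emph{all three} operations are column operations, leaving only two alternating patterns to check.

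Second, the test matrix you flag as ``the hard part'' is indeed the crux, and your proposal does not supply it. The paper's resolution is to work with the row $[a,\,bs]=[a,\,1-ar]$ rather than $[a,b]$; this row \emph{is} always completable (for instance by $\left[\begin{smallmatrix} a & 1-ar\\ -1 & r\end{smallmatrix}\right]$). A further lemma shows that whenever $[a,b]$ tops some invertible matrix and $ay+bz=1$, one can arrange a new invertible $Y$ with the same top row and with $Y^{-1}$ having left column $\left[\begin{smallmatrix} y\\ z\end{smallmatrix}\right]$; applying this with $a(2r-rar)+(bs)(bs)=1$ yields a $Y$ whose top row is $[a,bs]$ and whose inverse has left column $\left[\begin{smallmatrix} 2r-rar\\ bs\end{smallmatrix}\right]$. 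In the first column-reduction pattern the $(1,1)$-entry then gives $a+bsp_1\in\U(R)$ directly. In the second pattern, however, no unit of the form $a+bx$ appears: one only obtains $r+(r-p_1)bs\in\U(R)$ from the first column of $Y^{-1}$, and a separate stable-rank lemma (the left-right dual of \cite[(1.25)]{LamExercises}, applied to $ar+bs=1$) is needed to convert this into a unit in $a+bR$. Your plan of ``reading off the $(1,1)$-entry as a unit in $a+bR$'' would fail in this second case without that extra ingredient.
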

\begin{proof}
Throughout this proof, let $X\in {\rm GL}_2(R)$ be an arbitrary invertible matrix.

$(\Rightarrow)$: Assume $R$ has stable rank $1$.  Write the first row of $X$ as $[a,b]$ for some $a,b\in R$.  As $XX^{-1}$ is the identity matrix, we have $aR+bR=R$.  Thus, from the definition of stable rank $1$, we may fix some $x\in R$ with $a+bx\in \U(R)$.  This means that after one elementary column operation, we can force the northwest corner of $X$ to be a unit.  Using that unit, we can finish the diagonal reduction after two more elementary operations.

$(\Leftarrow)$: When $X$ can be diagonally reduced by elementary operations, we claim that the reduction can be accomplished by using only column operations, but the same number.  To see why, suppose that $M_1$ and $M_2$ are finite products of elementary $2\times 2$ matrices, and that $D:=M_1 X M_2$ is diagonal.  Then, $X=M_{1}^{-1}DM_{2}^{-1}$.  By using the normalizing identity \eqref{Eq:NormalizingEquality} finitely many times, we have $M_{1}D=DM_3$ for some matrix $M_3$ that is also a product of elementary matrices, with the same number of elementary matrices as in $M_1$.  Rearranging, we obtain $M_{1}^{-1}D=DM_3^{-1}$. Thus, $X=DM_3^{-1}M_2^{-1}$, and so $D=XM_2M_3$.

Now, assume every matrix in ${\rm GL}_2(R)$ can be diagonally reduced by at most three elementary operations.  The identity operation is an elementary operation, so we can use exactly three operations.  Further, note that a product of two upper-triangular elementary matrices is again elementary, since
\[
\begin{bmatrix}1 & p\\ 0 & 1\end{bmatrix}\begin{bmatrix}1 & q\\ 0 & 1\end{bmatrix}=\begin{bmatrix}1 & p+q\\ 0 & 1\end{bmatrix}.
\]
The same is true for lower-triangular elementary matrices.  So, without loss of generality, we may assume that the diagonal reduction operations are represented by alternatingly upper-triangular and lower-triangular matrices (by combining like types as necessary).

We first show that if $[a,b]$ is the top row of an invertible matrix $X$, and if $ay+bz=1$ for some $y,z\in R$, then $[a,b]$ is the top row of a new invertible matrix $Y\in {\rm GL}_2(R)$, where the left column of $Y^{-1}$ is $\left[\begin{smallmatrix}y \\ z\end{smallmatrix}\right]$.  Let $M_1$ be a product of three elementary matrices such that $XM_1=\left[\begin{smallmatrix}v & 0\\ 0 & w\end{smallmatrix} \right]$ for some $v,w\in R$.  Any diagonal matrix in ${\rm GL}_2(R)$ must have unit entries along the diagonal, and so $v,w\in \U(R)$.  Now,
\[
1=\begin{bmatrix}a & b\end{bmatrix}\begin{bmatrix}y \\ z\end{bmatrix}=\begin{bmatrix}a & b\end{bmatrix}M_1 M_1^{-1}\begin{bmatrix}y \\ z\end{bmatrix}
=\begin{bmatrix}v & 0\end{bmatrix}M_1^{-1}\begin{bmatrix}y\\ z\end{bmatrix}.
\]
Thus,
\[
M_1^{-1}\begin{bmatrix}y\\ z\end{bmatrix}=\begin{bmatrix}v^{-1}\\ t\end{bmatrix}
\]
for some $t\in R$.  Fix $M_2=\left[\begin{smallmatrix}1 & 0\\ -tv & 1\end{smallmatrix}\right]$, so $M_2M_1^{-1}\left[\begin{smallmatrix}y\\ z\end{smallmatrix}\right]=\left[\begin{smallmatrix}v^{-1}\\ 0\end{smallmatrix}\right]$.  Note that $[v,0]M_2=[v,0]$.  Fixing $Y:=\left[\begin{smallmatrix}v & 0\\ 0 & 1\end{smallmatrix}\right]M_2M_{1}^{-1}$, then the top row of $Y$ and the left column of $Y^{-1}$ are as desired.

Finally, we show that given $a,b\in R$ with $aR+bR=R$, then there is some element $x\in R$ with $a+bx\in \U(R)$.  Fix any two elements $r,s\in R$ such that $ar+bs=1$.  Since $ar + (bs)1 = 1$, the previous paragraph implies that there is an invertible matrix whose top row is $[a,\, bs]$.

Since $bs=(1-ar)$, we have $(bs)^2=1-2ar+arar$.  Thus,
\[
a(2r-rar)+(bs)(bs)=2ar-arar+(1-2ar+arar)=1.
\]
By another application of the work above, there is some matrix $Y\in {\rm GL}_2(R)$ whose first row is $[a,bs]$, and where the left column of $Y^{-1}$ is $\left[\begin{smallmatrix}2r-rar\\ bs\end{smallmatrix}\right]$.  There are now two cases to consider.

First, suppose that
\[
Y\begin{bmatrix}1 & 0\\ p_1 & 1\end{bmatrix}\begin{bmatrix}1 & p_2\\ 0 & 1\end{bmatrix}\begin{bmatrix}1 & 0\\ p_3 & 1\end{bmatrix}=\begin{bmatrix}v & 0\\ 0 & w\end{bmatrix}
\]
for some $p_1,p_2,p_3,v,w\in R$.  Comparing the upper right corners of both sides, we have $bs+(a+bsp_1)p_2=0$.  With this equality in hand, then comparing the upper left corners we have $a+bsp_1=v\in \U(R)$.  Hence, we may take $x=sp_1$.

Second, suppose that
\[
Y\begin{bmatrix}1 & p_1\\ 0 & 1\end{bmatrix}\begin{bmatrix}1 &0\\ p_2 & 1\end{bmatrix}\begin{bmatrix}1 & p_3\\ 0 & 1\end{bmatrix}=\begin{bmatrix}v & 0\\ 0 & w\end{bmatrix}
\]
for some $p_1,p_2,p_3,v,w\in R$.  Taking inverses of both sides, then the left-right dual of the computation from the previous paragraph shows that $(2r-rar)-p_1bs=v^{-1}\in \U(R)$.  So,
\[
v^{-1}=r+r(1-ar)-p_1bs=r+(r-p_1)bs.
\]
Applying the left-right symmetric version of \cite[(1.25)]{LamExercises} to the equality $ar+(bs)=1$, then there is some $z\in R$ with $a+bsz\in \U(R)$.  Hence, we may take $x=sz$.
\end{proof}

Over a nonzero ring, the matrix $\left[\begin{smallmatrix}0 & 1\\ 1 & 0\end{smallmatrix}\right]$ cannot be diagonally reduced using fewer than three elementary operations.  So, this provides a natural lower bound on how many elementary operations are generally needed.  Of course, individual matrices may require fewer than three elementary operations to diagonally reduce.

There are separative regular rings that do not have stable rank $1$, such as the ring of column-finite matrices over a field.  So, we have shown that for separative exchange rings, at least four elementary operations are necessary, in general, to diagonally reduce $2\times 2$ invertible matrices.  Our next goal is to show that this number of operations is optimal.  We begin with the following powerhouse lemma about exchange rings.

\begin{lemma}\label{Lemma:NewFactFullIdemExchange}
Let $R$ be an exchange ring, and let $a,b\in R$.  If $aR+bR=R$, then there exists an idempotent $e\in aR$ with $1-e\in bR$.  Moreover, given any idempotent $f\in aR$, we can choose $e$ \textup{(}depending on $f$\textup{)} so that $f\in eR$.  In particular, if $aR\subseteq^{\oplus} R_R$, then we can choose $e$ so that $eR=aR$.
\end{lemma}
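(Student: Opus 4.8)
The plan is to establish the three assertions in turn, using each to power the next. For the bare existence claim, I would unwind the hypothesis $aR+bR=R$ into a relation $1=ar+bs$ and then feed the single element $ar$ to the idempotent characterization of exchange rings (see \cite{Nicholson}): this returns an idempotent $e$ with $e\in(ar)R$ and $1-e\in(1-ar)R$. Since $(ar)R\subseteq aR$ and $1-ar=bs$, this $e$ already satisfies $e\in aR$ and $1-e\in(bs)R\subseteq bR$, which is exactly the first assertion. (Alternatively one can run $ar$ through a suitabilizer and read off the idempotent via Lemma \ref{Lemma:ExchangeRegular}(1).)

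For the \emph{moreover} clause I would pass to the corner ring $\bar R:=(1-f)R(1-f)$, which is again an exchange ring. Truncating $1=ar+bs$ on both sides by $1-f$ gives
\[
1-f=\alpha+\beta,\qquad \alpha:=(1-f)ar(1-f),\quad \beta:=(1-f)bs(1-f),
\]
an equation in $\bar R$ whose left side is the identity of $\bar R$. Applying the (already proved) first assertion inside $\bar R$ to the right ideals $\alpha\bar R$ and $\beta\bar R$ produces an idempotent $e'\in\alpha\bar R$ with $(1-f)-e'\in\beta\bar R$. I would then set $e:=f+e'$. Because $e'=(1-f)e'(1-f)$ is orthogonal to $f$ on both sides, $e$ is an idempotent with $ef=f$, so $f\in eR$; moreover $e'\in\alpha\bar R\subseteq(1-f)aR\subseteq aR$ (the last inclusion holding because $f\in aR$), and together with $f\in aR$ this gives $fR\subseteq eR\subseteq aR$.

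The main obstacle is the $b$-side, and it is where the argument has real content. The construction only yields $(1-f)-e'\in(1-f)bR$, and $(1-f)bR$ is not contained in $bR$, so the desired $1-e\in bR$ does not come for free. What does follow, by a short element chase from $(1-f)-e'\in(1-f)bR$ together with $fR\subseteq eR$ (which absorbs the stray term coming from $f\,bR$), is the weaker equality $eR+bR=R$. To convert this into a genuine complement sitting inside $bR$, I would invoke projectivity: left multiplication by $1-e$ carries $bR$ onto $(1-e)R$ (precisely because $eR+bR=R$), and since $(1-e)R$ is a direct summand of $R_R$ it is projective, so this surjection splits. A splitting produces a summand $C\subseteq bR$ with $R=eR\oplus C$; letting $e_2$ be the idempotent with $e_2R=eR$ and $(1-e_2)R=C$, I obtain $1-e_2\in bR$ while retaining $e_2\in aR$ and $f\in e_2R$, so $e_2$ is the required idempotent. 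Finally, the \emph{in particular} statement is immediate from the \emph{moreover} clause: when $aR\subseteq^{\oplus}R_R$ we may write $aR=fR$ for an idempotent $f$, and then $fR\subseteq eR\subseteq aR=fR$ forces $eR=aR$.
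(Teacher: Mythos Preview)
Your argument is correct, but it follows a different path from the paper's. The paper handles all three assertions in one stroke by invoking Stock's property $(N)_{\rm finite}$: since $R_R$ has this property, the decomposition $R=fR\oplus(1-f)R$ refines against the cover $R=aR+bR$ (via \cite[Lemma~2.5]{Stock}) to give $R=fR\oplus A'\oplus B'$ with $A'\subseteq aR$ and $B'\subseteq bR$, and one simply takes $e$ with $eR=fR\oplus A'$ and $(1-e)R=B'$. Your route is more element-level: you first settle the base case via Nicholson's idempotent characterization, then pass to the corner $(1-f)R(1-f)$ for the \emph{moreover} clause, and finally repair the $b$-side defect (namely that $1-e$ lands in $(1-f)bR$ rather than $bR$) by a projective-splitting argument that replaces $e$ by a new idempotent $e_2$ with the same image but a complement inside $bR$. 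This trade-off is genuine: the paper's proof is shorter and conceptually cleaner once Stock's lemma is in hand, and it generalizes painlessly to the relative version in Lemma~\ref{Lemma:StrongerExchangeFacts}; your approach avoids the external reference to Stock and uses only the bare Nicholson criterion plus elementary projectivity, at the cost of the extra fix-up step.
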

\begin{proof}
Assume $f\in aR$ is an idempotent.  Slightly modifying the terminology of \cite[p.\ 440]{Stock}, let us say that a module $P$ has property $(N)_{\rm finite}$ when given any module decomposition $P=\sum_{i\in I}N_i$ with $|I|<\infty$, then there exist submodules $N_i'\subseteq N_i$ with $P=\bigoplus_{i\in I}N_i'$.  From \cite[Proposition 2.9]{Nicholson}, we know that a projective module has the finite exchange property if and only if it has property $(N)_{\rm finite}$.  In particular, $R_R$ has this property.

Note that \cite[Lemma 2.5]{Stock} is still true with $(N)_{\rm finite}$ in place of $(N)$ after limiting the index set $I$ to be finite, using the same proof.  Consequently, since $R=fR\oplus (1-f)R=aR+bR$, then there exist right ideals $A'\subseteq aR$ and $B'\subseteq bR$ such that $R=fR\oplus A'\oplus B'$.

Letting $e\in R$ be the idempotent with $eR=fR\oplus A'$ and $(1-e)R=B'$, then $e\in aR$ and $1-e\in bR$.  Clearly, $f\in eR$.

To prove the last sentence, suppose that $aR\subseteq^{\oplus}R_R$.  Then there is some idempotent $f\in aR$ such that $aR=fR$.  In this case $A'=0$, and so $eR=fR\oplus A'=fR=aR$.
\end{proof}

Given any matrix $A:=\left[\begin{smallmatrix}a & b\\ c & d\end{smallmatrix}\right]\in {\rm GL}_2(R)$, the top row $[a,b]$ is \emph{right unimodular}, meaning
\[
aR+bR=R.
\]
In the opposite direction, if a right unimodular row is the top row of an invertible matrix, we say that it is \emph{completable}.  It is easy to find examples of right unimodular rows that are not completable.  For instance, in the ring ${\rm CFM}_{\omega}(F)$ of $\omega\times \omega$ column-finite matrices over a field $F$, the right unimodular row generated by
\begin{equation}\label{Eq:NotCompletable}
\begin{bmatrix}1 & \phantom{\ddots} & \phantom{\ddots} & \phantom{\ddots} & \phantom{\ddots}\\
0 & 0 & \phantom{\ddots} & \phantom{\ddots} & \phantom{\ddots}\\
\phantom{\ddots}& 1 & 0 & \phantom{\ddots} & \phantom{\ddots}\\
 \phantom{\ddots}& 0 & 0 & 0 & \phantom{\ddots}\\
\phantom{\ddots}& \phantom{\ddots}& 1 & 0 & 0\\
 \phantom{\ddots}& \phantom{\ddots} & 0 & 0 & 0 & 0 & \phantom{\ddots}\\
 \phantom{\ddots}& \phantom{\ddots}& \phantom{\ddots}& \phantom{\ddots}& \phantom{\ddots}& \phantom{\ddots} & \ddots \end{bmatrix}\ \text{ and }\
\begin{bmatrix}0 & \phantom{\ddots} & \phantom{\ddots} & \phantom{\ddots} & \phantom{\ddots}\\
1 & 0 & \phantom{\ddots} & \phantom{\ddots} & \phantom{\ddots}\\
\phantom{\ddots}& 0 & 0 & \phantom{\ddots} & \phantom{\ddots}\\
 \phantom{\ddots}& 1 & 0 & 0 & \phantom{\ddots}\\
\phantom{\ddots}& \phantom{\ddots}& 0 & 0 & 0\\
 \phantom{\ddots}& \phantom{\ddots} & 1 & 0 & 0 & 0 & \phantom{\ddots}\\
 \phantom{\ddots}& \phantom{\ddots}& \phantom{\ddots}& \phantom{\ddots}& \phantom{\ddots}& \phantom{\ddots} & \ddots \end{bmatrix}
\end{equation}
is not completable.  Before we give the proof, let us recall some facts about completability.

Given $m,n\in \Z_{>0}$, identify $R^m$ with the set of column matrices of size $m\times 1$, and do the same for $R^n$.  Then each right $R$-module homomorphism $R^n\to R^m$ corresponds to left multiplication by an $m\times n$ matrix.  In particular, by taking $m=1$, we have a correspondence between rows of length $n$ and their left multiplication action $R^n\to R$.  Clearly, any such row is right unimodular if and only if the corresponding left multiplication map is surjective.

In that case, from projectivity write $R^n\cong P\oplus R$, where $P$ is the kernel of the left multiplication action.  The given right unimodular row is completable if and only if $P\cong R^{n-1}$; see \cite[Corollary 4.9]{LamSerreBook} for the proof.  Specializing to the case when $n=2$, we have:

\begin{cor}
Let $R$ be a ring.  Every $1\times 2$ right unimodular row is completable if and only if $R$ is self-cancellative \textup{(}meaning that for any $P\in \FP(R)$, then $R^2\cong P\oplus R$ implies $P\cong R$\textup{)}.
\end{cor}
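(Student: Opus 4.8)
The plan is to read off both directions from the module-theoretic reformulation just recalled, using the cited criterion \cite[Corollary 4.9]{LamSerreBook} that a length-$2$ right unimodular row is completable precisely when the kernel $P$ of the associated surjection $R^2\to R$ satisfies $P\cong R$. The whole argument is then a matter of matching the two conditions through the correspondence between right unimodular rows and surjections $R^2\to R$; the substantive input is the cited criterion, so I expect no real obstacle beyond bookkeeping.

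For the backward implication, I would start from an arbitrary $1\times 2$ right unimodular row and pass to the corresponding surjective left-multiplication map $R^2\to R$. Since $R$ is projective this surjection splits, giving $R^2\cong P\oplus R$ with $P$ its kernel; in particular $P$ is a direct summand of $R^2$, hence $P\in\FP(R)$. Self-cancellativity of $R$ then forces $P\cong R$, and the cited criterion delivers completability of the row. As the row was arbitrary, every $1\times 2$ right unimodular row is completable.

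For the forward implication, I would start from any $P\in\FP(R)$ with $R^2\cong P\oplus R$ and produce a surjection realizing $P$ as a kernel: compose the isomorphism $R^2\to P\oplus R$ with the projection onto the $R$ summand. This surjection corresponds to a $1\times 2$ right unimodular row whose kernel is isomorphic to $P$. By hypothesis that row is completable, so the cited criterion gives $P\cong R$, which is exactly self-cancellativity. The only point requiring slight care is that every module $P$ appearing in the self-cancellative condition genuinely arises as the kernel of such a surjection, which is handled by the projection construction above; conversely every kernel is automatically finitely generated projective because the surjection splits. Thus the two conditions are literally translations of one another under \cite[Corollary 4.9]{LamSerreBook}, and no further machinery is needed.
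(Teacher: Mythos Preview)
Your proposal is correct and follows exactly the paper's approach: the corollary is stated immediately after the discussion of the cited criterion \cite[Corollary 4.9]{LamSerreBook} with the phrase ``Specializing to the case when $n=2$,'' and no separate proof is given. You have simply spelled out the bookkeeping that the paper leaves implicit, in particular that every $P\in\FP(R)$ with $R^2\cong P\oplus R$ arises as the kernel of a surjection $R^2\to R$ and conversely.
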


Taking $R={\rm CFM}_{\omega}(F)$, then there is an obvious isomorphism $R^2\to R$ given by sending the rows of the first coordinate of $R^2$ to the odd indexed rows in the output and similarly sending the rows of the second coordinate of $R^2$ to the even indexed rows in the output.  This isomorphism is exactly left multiplication by the row generated by \eqref{Eq:NotCompletable}, which explains how we found that row in the first place.  Lack of completability boils down to the fact that the kernel of the homomorphism is zero, and hence it is not isomorphic to $R$.

The next proposition gives us a strong way to complete matrices over exchange rings.

\begin{prop}\label{Prop:CompletableRows}
Let $R$ be an exchange ring, and let $[a,b]\in R^2$ be a completable row.  Then there exist $c,d\in R$ such that $A:=\left[\begin{smallmatrix}a & b\\ c & d\end{smallmatrix}\right]\in {\rm GL}_2(R)$, with $RcR=R$ and $c$ regular.
\end{prop}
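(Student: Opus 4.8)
The plan is to begin with an arbitrary completion and then adjust its bottom row until the lower-left entry becomes regular and full. Since $[a,b]$ is completable, fix some $M_0=\left[\begin{smallmatrix} a & b\\ c_0 & d_0\end{smallmatrix}\right]\in {\rm GL}_2(R)$. If $M$ is any other completion with the same top row, then $N:=MM_0^{-1}\in {\rm GL}_2(R)$ must have top row $[1,0]$ (because $M_0$ is invertible and $M,M_0$ share a top row), so $N=\left[\begin{smallmatrix}1 & 0\\ s & u\end{smallmatrix}\right]$ with $u\in \U(R)$; conversely every such $N$ yields a completion. Thus the candidate lower-left entries are exactly $c=sa+uc_0$ with $s\in R$ and $u\in\U(R)$, the lower-right entry $d=sb+ud_0$ being forced and invertibility automatic. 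The $(1,1)$-entry of $M_0^{-1}M_0=I$ reads $1=pa+qc_0$ for the first row $[p,q]$ of $M_0^{-1}$, so $Ra+Rc_0=R$; a one-line check then gives $Ra+Rc=R$ for every candidate $c$. Hence each candidate is already left-unimodular with $a$, and it remains only to select $(s,u)$ making $c$ regular and full.

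For that selection I would run the exchange hypothesis through the powerhouse Lemma \ref{Lemma:NewFactFullIdemExchange}. Recall from Section \ref{Section:InnerInverses} that a regular $c$ has $cR$ equal to a direct summand $gR$ of $R_R$ for an idempotent $g$, and that $RcR=RgR$; hence the target ``regular and full'' is precisely ``$cR=gR$ for a \emph{full} idempotent $g$''. The input $aR+bR=R$ feeds Lemma \ref{Lemma:NewFactFullIdemExchange} to produce an idempotent $e\in aR$ with $1-e\in bR$, and its refinements---the freedom to demand $f\in eR$ for a prescribed idempotent $f\in aR$, and the equality $eR=aR$ when $aR$ is a summand---are exactly what allow the idempotent governing regularity to be matched with one tethered to both generators $a$ and $b$. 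The plan is to use these refinements to pick $(s,u)$ so that the summand $cR$ absorbs such a tethered idempotent for each of $a$ and $b$, forcing $a,b\in RcR$; since $aR+bR=R$ yields $RaR+RbR=R$, this delivers $RcR=R$.

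The main obstacle is fullness, and specifically achieving it \emph{simultaneously} with regularity through a single pair $(s,u)$. Fullness is a genuinely global condition: $RcR=R$ must hold modulo every maximal two-sided ideal at once, so although for each such ideal a suitable $s$ is easy to find (using that $(\bar a,\bar c_0)\neq(0,0)$ in the simple quotient), these local choices cannot be patched together directly. This is the point at which the structure of exchange rings becomes indispensable, and where I expect the delicate work to lie: one must arrange that the idempotent $g$ controlling the regular image $cR$ and the full idempotent produced from $aR+bR=R$ coincide, which is exactly the role of the prescribed-$f$ clause of Lemma \ref{Lemma:NewFactFullIdemExchange}. Once $(s,u)$ is fixed, setting $d:=sb+ud_0$ completes $[a,b]$ to the required invertible matrix with $c$ regular and $RcR=R$.
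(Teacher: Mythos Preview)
Your proposal is an outline, not a proof: the decisive step---actually producing a pair $(s,u)$ with $c=sa+uc_0$ regular and full---is never carried out, and your third paragraph explicitly concedes this (``where I expect the delicate work to lie''). The parametrization of completions is correct, as is the observation that $Ra+Rc=R$ for every candidate $c$, but neither of these facts is used later. The heuristic in your second paragraph is also not right as stated: arranging that $cR$ ``absorbs'' idempotents lying in $aR$ or $bR$ does not by itself force $a,b\in RcR$; an idempotent $e\in aR\cap cR$ gives $e\in RcR$, not $a\in RcR$. What you actually need is that the idempotent $g$ with $gR=cR$ be full, and you give no mechanism for that.

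The paper's argument is structurally different from the search you propose. Rather than hunting directly for $(s,u)$, it performs a concrete sequence of elementary operations $E_1,E_2,E_3,E_4$ on an arbitrary completion, \emph{allowing column operations that change the top row}, and then restores the top row at the end with $E_1^{-1}$. The key maneuver is to first make the upper-right entry regular (via $E_1$), then the lower-right entry regular (via $E_2$), so that an orthogonality relation $(1-e)b(1-f)=0$ holds; this relation is what guarantees that the subsequent column operations $E_3,E_4$---which build fullness and then regularity into the lower-left entry---leave the top row untouched. Fullness itself is obtained by invoking \cite[Corollary~2.2]{AGOR} to extract a full idempotent from a full element, a step your outline does not mention. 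Your row-only framework (left multiplication by $\left[\begin{smallmatrix}1&0\\ s&u\end{smallmatrix}\right]$) cannot imitate this, because the paper's construction genuinely passes through matrices whose top row is not $[a,b]$.
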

\begin{proof}
As $[a,b]$ is completable, fix some matrix $A:=\left[\begin{smallmatrix}a & b\\ \ast & \ast\end{smallmatrix}\right]\in {\rm GL}_2(R)$.  Moreover, as $[a,b]$ is right unimodular, then by Lemma \ref{Lemma:NewFactFullIdemExchange} we can find an idempotent $e\in aR$ with $1-e\in bR$.  Write $e=ar$ and $1-e=bs$ for some $r,s\in R$.

Let $E_1:=\left[\begin{smallmatrix}1 & -rb\\ 0& 1\end{smallmatrix}\right]$.  We find
\[
AE_1 = \begin{bmatrix}a & (1-e)b\\ \ast & z\end{bmatrix},
\]
for some $z\in R$.  The upper-right corner of $AE_1$ is $(1-e)b=bsb$, which is regular with inner inverse $s$.  In particular $R(1-e)b\subseteq^{\oplus}\! _RR$.

Notice that $R(1-e)b+Rz=R$, since $AE_1$ is (left-)invertible.  Using Lemma \ref{Lemma:NewFactFullIdemExchange} a second time, fix an idempotent $f\in R(1-e)b$ with $1-f\in Rz$ and with $R(1-e)b= Rf$.  In particular,
\begin{equation}\label{Eq:NeededOrtho}
(1-e)b(1-f)=0.
\end{equation}
Write $f=t(1-e)b$ and $1-f=uz$, for some $t,u\in R$.

Let $E_2:=\left[\begin{smallmatrix}1 & 0\\ -zt & 1\end{smallmatrix}\right]$.  We find
\[
E_2AE_1 = \begin{bmatrix}a & (1-e)b\\ c_1 & z(1-f)\end{bmatrix},
\]
for some $c_1\in R$.  The lower-right corner of $E_2AE_1$ is $z(1-f)=zuz$, which is regular with inner inverse $u$.  By using Lemma \ref{Lemma:NewFactFullIdemExchange} a third time, we can fix an idempotent $g\in z(1-f)R$ so that $1-g\in c_1R$ and $gR=z(1-f)R$.  Write $g=z(1-f)v$ and $1-g=c_1w$, for some $v,w\in R$.

Let $E_3:=\left[\begin{smallmatrix}1 & 0\\ -(1-f)vc_1+(1-f)v & 1\end{smallmatrix}\right]$.  From \eqref{Eq:NeededOrtho}, the top rows of $E_2AE_1$ and $E_2AE_1E_3$ agree, and we find
\[
E_2AE_1E_3=\begin{bmatrix}a & (1-e)b\\ (1-g)c_1+g & z(1-f)\end{bmatrix}.
\]
Let $c_2:=(1-g)c_1+g$ be the lower left corner entry of $E_2AE_1E_3$.  Since $gc_2=g$, and $(1-g)c_2w=(1-g)$, we see that $Rc_2R=R$.  However, $c_2$ may not yet be regular.  From \cite[Corollary 2.2]{AGOR}, we may fix an idempotent $p\in c_2R$ with $RpR=R$.  Now, by a fourth use of Lemma \ref{Lemma:NewFactFullIdemExchange}, fix an idempotent $h\in c_2R$ with $1-h\in z(1-f)R$ and with $p\in hR$.  Write $h=c_2x$ and $1-h=z(1-f)y$, for some $x,y\in R$.

Let $E_4:=\left[\begin{smallmatrix}1 & 0\\ -(1-f)yc_2 & 1\end{smallmatrix}\right]$.  We find
\[
E_2AE_1E_3E_4=\begin{bmatrix}a & (1-e)b\\ hc_2 & z(1-f)\end{bmatrix}.
\]
Letting $c:=hc_2=c_2xc_2$, we see that $c$ is regular, with inner inverse $x$.  Moreover,
\[
RcR\supseteq RcxR=RhR\supseteq RpR=R.
\]
Finally, multiply on the right by $E_1^{-1}$ to replace $(1-e)b$ with $b$ but leave $a$ and $c$ unchanged.  We see that
\[
E_2AE_1E_3E_4E_1^{-1}=\begin{bmatrix}a & b\\ c & \ast\end{bmatrix}\in {\rm GL}_2(R),
\]
which is a matrix of the form we want.
\end{proof}

Next, we will show that over separative exchange rings, matrices of the type given in Proposition \ref{Prop:CompletableRows} can be diagonally reduced using four elementary operations.

\begin{prop}\label{Prop:FourForSpecial}
Let $R$ be a separative exchange ring.  Assume that $a,b,c,d\in R$ are such that $A:=\left[\begin{smallmatrix}a & b\\ c & d\end{smallmatrix}\right]\in {\rm GL}_2(R)$, with $RcR=R$ and $c$ regular.  Then there exist upper-triangular elementary matrices $U_1,U_2$ and lower-triangular elementary matrices $L_1,L_2$ such that $U_2L_1AU_1L_2$ is diagonal.
\end{prop}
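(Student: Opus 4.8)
My plan is to split the four operations into two that install a unit in the southeast corner and two that clear the entries around it. I would first record the finishing move: if after a lower-triangular row operation $L_1=\left[\begin{smallmatrix}1&0\\ l&1\end{smallmatrix}\right]$ and an upper-triangular column operation $U_1=\left[\begin{smallmatrix}1&u\\ 0&1\end{smallmatrix}\right]$ the $(2,2)$-entry $v$ of $M:=L_1AU_1$ is a unit, then since $M$ is invertible the last two operations are forced: the upper-triangular row operation killing the $(1,2)$-entry and then the lower-triangular column operation killing the $(2,1)$-entry leave $U_2L_1AU_1L_2$ diagonal (with automatically invertible diagonal entries). The operation types match the asserted pattern exactly, so everything comes down to choosing $l$ and $u$ so that the $(2,2)$-entry of $L_1AU_1$, which equals $[l,1]\,A\,[u,1]^{\mathsf T}$, is a unit.

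To exploit the hypotheses on $c$, I would spend the column operation on $u:=-c'd$, so that the bottom row of $AU_1$ becomes $[\,c,\ (1-cc')d\,]$. Put $e:=cc'$; the hypothesis $RcR=R$ gives $Rcc'R=RcR=R$, so $e$ is a full idempotent and $[eR]$ is an order unit in $V(R)$. Since the bottom row of the invertible matrix $AU_1$ is right unimodular and $cR=eR$, applying $1-e$ to a unimodular relation shows $(1-e)dR=(1-e)R$; hence $w:=(1-e)d$ is regular with image $(1-e)R$ and with cokernel $R/(1-e)R\cong eR$, which is full. Thus after one (upper-triangular) column operation the southeast entry sits at the regular element $w$ whose cokernel idempotent is exactly the full idempotent produced by $RcR=R$, and it remains only to apply the single lower-triangular row operation $L_1$ and arrange that $w+lb_1$ is a unit, where $b_1$ is the new $(1,2)$-entry and $[b_1,w]^{\mathsf T}$ is left unimodular because it is a column of the invertible matrix $AU_1$.

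The heart of the proof, which I expect to be the main obstacle, is to choose $l$ so that $w+lb_1\in\U(R)$; this is a stable-rank statement relative to the regular element $w$, and it is exactly here that separativity is used. Writing $v'=w+lb_1$ and decomposing $R$ with respect to the splitting induced by $w$, one sees that $v'$ is a unit precisely when a certain kernel summand is isomorphic to $eR$; since the relevant modules differ by a common summand, this reduces to a cancellation of the form $X\oplus(1-e)R\cong eR\oplus(1-e)R\Rightarrow X\cong eR$. Separative cancellation applies once one knows that $(1-e)R$ is dominated by multiples of both $eR$ and $X$: the first holds because $e$ is full (this is the force of $RcR=R$), and securing the second—equivalently, making the complementary summand full—is the precise job of the row operation $L_1$, realized by a suitable choice of $l$ via the exchange-ring idempotent extraction in Lemma~\ref{Lemma:NewFactFullIdemExchange}. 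Equivalently, and in the explicit spirit of Section~\ref{Section:NewInnerInverses}, the obstruction is the unit-regularity of a repeater assembled from the entries of $A$, which separativity guarantees by Theorem~\ref{Thm:SpecialRepeaters2}; its unit inner inverse then supplies $l$ and the final entries. I would close by noting that this extra row operation is genuinely needed: if $l=0$ always worked, three operations would suffice, contradicting Theorem~\ref{Thm:DiagBy3Classification} for separative regular rings without stable rank~$1$, such as $\mathrm{CFM}_{\omega}(F)$.
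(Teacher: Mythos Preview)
Your overall shape matches the paper's: use $U_1$ and $L_1$ to force the $(2,2)$-entry to be a unit, then clear with $U_2$ and $L_2$. Your computation that $w:=(1-cc')d$ satisfies $wR=(1-e)R$ with $e:=cc'$ full is correct, and this is exactly the first stage of the paper's $U_1$.

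The gap is in the heart of the argument. You need to produce $l$ with $w+lb_1\in\U(R)$, and you appeal to separative cancellation of the form $X\oplus(1-e)R\cong eR\oplus(1-e)R\Rightarrow X\cong eR$. For this you correctly note that $(1-e)R$ is dominated by multiples of $eR$ (since $e$ is full), but the second domination---that $(1-e)R$ is dominated by multiples of $X=\ker(w)$---does not follow from anything you have arranged. You say ``securing the second \ldots\ is the precise job of the row operation $L_1$,'' but $L_1$ is the very operation whose existence you are trying to establish; it cannot simultaneously fix the fullness hypothesis and be produced by the cancellation that hypothesis enables. Your alternative appeal to Theorem~\ref{Thm:SpecialRepeaters2} does not help either: $w$ is not a repeater, since $wR=(1-e)R$ need not embed as a summand of $eR$ (fullness of $e$ only gives $(1-e)R\subsetsim^{\oplus} n\cdot eR$ for some $n$, not $n=1$), and that theorem is stated for regular rings rather than exchange rings.

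The paper closes this gap by doing more work inside $U_1$. Since a product of upper-triangular elementary matrices is again upper-triangular elementary, the paper's $U_1=U_{1,1}U_{1,2}U_{1,3}$ first performs your move $u=-c'd$, and then uses two further stages (via Lemma~\ref{Lemma:NewFactFullIdemExchange} and \cite[Corollary~2.2]{AGOR}) to make the new $(1,2)$-entry $b'$ both full and regular. With $b'$ full, a symmetric argument inside $L_1=L_{1,2}L_{1,1}$ arranges $Rd_1=Rq$ with $1-q$ full; now $d_1$ has \emph{both} complementary idempotents full, and the separativity argument of \cite[Theorem~2.8]{AGOR} makes $d_1$ unit-regular. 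A final stage $L_{1,2}$ upgrades unit-regular to unit. Your sketch recovers only $U_{1,1}$ and does not supply the mechanism that replaces $U_{1,2},U_{1,3}$ and $L_{1,1}$; without it, the single parameter $l$ you seek need not exist.
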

\begin{proof}
We will break the proof into four successive steps.  In the first step we will construct an upper-triangular matrix $U_1$ so that $AU_1=\left[\begin{smallmatrix}a' & b'\\ c' & d'\end{smallmatrix}\right]$ has new special properties.  Throughout this proof primes are never inner inverse operations.
\vspace{\baselineskip}

\noindent{\bf Step 1}: We force $d'\in Rd$, with $d'R=pR$ for some idempotent $p\in R$ that additionally satisfies $R(1-p)R=R$; moreover, we force $Rb'R=R$ with $b'$ regular.

We break this step into three stages.  First, since $cR\subseteq^{\oplus}R_R$, then by Lemma \ref{Lemma:NewFactFullIdemExchange} there exists an idempotent $e\in cR$ with $1-e\in dR$ and with $cR=eR$.  Write $e=cr$ and $1-e=ds$, for some $r,s\in R$.  Note that $r$ is an inner inverse for $c$.

Setting $U_{1,1}:=\left[\begin{smallmatrix}1 & -rd\\ 0 & 1\end{smallmatrix}\right]$, then we have
\[
AU_{1,1}=\begin{bmatrix}
a & b-ard\\ c & (1-e)d
\end{bmatrix}.
\]
Notice that $d':=(1-e)d=dsd\in Rd$ is regular, with inner inverse $s$.  Taking $p:=ds=1-e$, we have $d'R=pR$.  Moreover,
\[
R(1-p)R=ReR=RcR=R.
\]

For the second and third stages we will not change the bottom row of $AU_{1,1}$, and thus not undo any of these facts about $d'$.  Before starting the second stage, we will perform an auxiliary calculation.  (Many of the ideas we will use already appear in the proof of Proposition \ref{Prop:CompletableRows}.)  Notice that $Ra+Rc=R$, and so by Lemma \ref{Lemma:NewFactFullIdemExchange} we may fix an idempotent $f\in Rc$ with $1-f\in Ra$ and with $Rc= Rf$. In particular,
\begin{equation}\label{Eq:SecondOrtho}
c(1-f)=0.
\end{equation}
Write $f=tc$, for some $t\in R$.  Letting $V:=\left[\begin{smallmatrix}1 & -ate\\ 0 & 1 \end{smallmatrix}\right]$, then the top row of $VAU_{1,1}$ is
\[
[a(1-f),b-ard].
\]
In particular, this row is right unimodular.

We are now ready for the second stage.  Using the exchange property, fix an idempotent $g\in a(1-f)R$ with $1-g\in (b-ard)R$.  Write $g=a(1-f)v$, for some $v\in R$.

Let $U_{1,2}:=E_{1,1}+E_{2,2} + [-(1-f)v(b-ard)+(1-f)v]E_{1,2}$.  By \eqref{Eq:SecondOrtho} we see that the bottom rows of $AU_{1,1}$ and $AU_{1,1}U_{1,2}$ are the same, and we find
\[
AU_{1,1}U_{1,2}=\begin{bmatrix}a & g+(1-g)(b-ard)\\c & (1-e)d\end{bmatrix}.
\]
Let $z\in R$ denote the upper-right entry of $AU_{1,1}U_{1,2}$.  Then $z$ is full (i.e., the two-sided ideal it generates is the entire ring) because $g=gz\in RzR$ and also $1-g\in (1-g)(b-ard)R\subseteq RzR$.  The matrix $U_{1,2}$ plays exactly the same role that $E_3$ did in the proof of Proposition \ref{Prop:CompletableRows}.  Analogously, we can construct another upper-triangular elementary matrix $U_{1,3}$, playing a role similar to that of $E_4$ from that other proof, such that right multiplication by $U_{1,3}$ again leaves the bottom row of $AU_{1,1}U_{1,2}$ alone, but replaces the upper-right entry with a full element that is also regular.  The reader is invited to work out the details.

Letting $U_1:=U_{1,1}U_{1,2}U_{1,3}$, we see that $U_1$ is an upper-triangular elementary matrix.  The matrix $AU_1=\left[\begin{smallmatrix}a' & b'\\ c' & d'\end{smallmatrix}\right]$ now has the claimed properties for Step 1.  In Step 2 we will construct a lower-triangular elementary matrix $L_{1}$ such that $L_{1}AU_1=\left[\begin{smallmatrix}a'' & b''\\ c'' & d''\end{smallmatrix}\right]$ satisfies a new special property.
\vspace{\baselineskip}

\noindent{\bf Step 2}:  We force $d''$ to be a unit.

We break this step into two stages.  First, by exactly the same idea as in the first stage of the first step, we can find a lower-triangular matrix $L_{1,1}$ so that the lower-right corner of $L_{1,1}AU_1$, which we can call $d_1$, satisfies $d_1\in d'R$, and $Rd_1=Rq$ for some idempotent $q\in R$ with $R(1-q)R=R$.  Since $d_1$ is regular, we have $d_1R=p'R$ for some idempotent $p'\in R$.  But since
\[
p'R=d_1R\subseteq d'R=pR
\]
we have $R(1-p')\supseteq R(1-p)$, and hence $R(1-p')R=R$.  (In other words, the new element $d_1$ continues to have some of those special properties we forced on $d'$ in Step 1.)

By the argument in the second and third paragraphs of the proof of \cite[Theorem 2.8]{AGOR}, which is the only place we use (a special case of) separativity, we see that $d_1$ is unit-regular.  (Another version of this same argument appears as \cite[Theorem 6.2]{AGOP}.)  Write $d_1=uh$ for some unit $u\in \U(R)$ and some idempotent $h\in R$.

For the second stage, using the exchange property fix an idempotent $e'\in Rd_1=Rh$ such that $1-e'\in Rb'$.  Applying another lower-triangular elementary matrix $L_{1,2}$, we can force the lower-right corner to become $d'':=uh+u(1-h)(1-e')$.  Since $d''$ is a unit if and only if $u^{-1}d''$ is a unit, it suffices to see that
\[
h+(1-h)(1-e')=1-(1-h)e'
\]
is a unit.  This is clear because $(1-h)e'$ squares to zero, since $e'\in Rh$.  Taking $L_1:=L_{1,2}L_{1,1}$, then Step 2 is complete.
\vspace{\baselineskip}

\noindent{\bf Steps 3 and 4}: Force the nondiagonal entries to be zero.

Using the unit entry, we can easily accomplish this task with an elementary column operation followed by an elementary row operation.
\end{proof}

In the previous proof, we used two row and two column operations.  Rewriting the resulting expression, we have
\begin{equation}\label{Eq:SixLower}
A=\begin{bmatrix}1 & 0\\ q_1 & 1\end{bmatrix}\begin{bmatrix}1 & q_2\\ 0 & 1\end{bmatrix}\begin{bmatrix}d_1 & 0\\ 0 & d_2\end{bmatrix}\begin{bmatrix}1 & 0\\ q_3 & 1\end{bmatrix}\begin{bmatrix}1 & q_4\\ 0 & 1\end{bmatrix}
\end{equation}
for some elements $q_1,q_2,q_3,q_4\in R$.  The placement of the diagonal matrix is irrelevant, because we have the normalizing equality \eqref{Eq:NormalizingEquality}, as well as its dual for lower-triangular matrices.  Thus, in hindsight, we could have proved Proposition \ref{Prop:FourForSpecial} using only row operations, or using only column operations.  Exploiting this fact, let us now prove:

\begin{thm}\label{Thm:FourIsEnough}
Let $R$ be a separative exchange ring.  Any element of ${\rm GL}_2(R)$ can be diagonally reduced using four elementary operations.
\end{thm}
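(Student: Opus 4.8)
The plan is to reduce an arbitrary $X\in{\rm GL}_2(R)$ to a matrix of the special form handled by Proposition \ref{Prop:FourForSpecial}, paying for the reduction with only one extra (lower-triangular) elementary factor and one extra diagonal factor, and then to show that those two extra factors can be absorbed without pushing the count above four. Write $X=\left[\begin{smallmatrix}a&b\\c&d\end{smallmatrix}\right]$. Its top row $[a,b]$ is completable, since $X$ itself is a witness. Hence Proposition \ref{Prop:CompletableRows} yields elements $c_0,d_0\in R$ with $A:=\left[\begin{smallmatrix}a&b\\c_0&d_0\end{smallmatrix}\right]\in{\rm GL}_2(R)$, $Rc_0R=R$, and $c_0$ regular. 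Thus $A$ shares its top row with $X$ and satisfies the hypotheses of Proposition \ref{Prop:FourForSpecial}.

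Next I would compare $A$ and $X$ directly. From $XX^{-1}=I$ the common top row satisfies $[a,b]X^{-1}=[1,0]$, so
\[
AX^{-1}=\begin{bmatrix}1&0\\u&v\end{bmatrix}
\]
for some $u,v\in R$. Because $AX^{-1}\in{\rm GL}_2(R)$ and a matrix of this shape is invertible exactly when $v\in\U(R)$, the entry $v$ is a unit, and we may factor
\[
AX^{-1}=\begin{bmatrix}1&0\\u&1\end{bmatrix}\begin{bmatrix}1&0\\0&v\end{bmatrix}=:LD',
\]
with $L$ lower-triangular elementary and $D'$ an invertible diagonal matrix. Rearranging gives $X=D'^{-1}L^{-1}A$, where $L^{-1}$ is again lower-triangular elementary.

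Finally I would insert the factorization of $A$ from Proposition \ref{Prop:FourForSpecial}. Writing $A$ in the form \eqref{Eq:SixLower}, whose leading factor is lower-triangular elementary, substitution gives a product representation of $X$ consisting of $D'^{-1}$, then $L^{-1}$ times the leading lower-triangular factor of \eqref{Eq:SixLower}, then the remaining four factors. The key collision is that $L^{-1}$ fuses with that leading lower-triangular factor into a single lower-triangular elementary matrix, so the elementary count is back to four (lower, upper, lower, upper), with the two diagonals $D'^{-1}$ and $d_1\oplus d_2$ still present. Using the normalizing equality \eqref{Eq:NormalizingEquality} and its lower-triangular analogue, I would transport $D'^{-1}$ rightward past the first two elementary factors—each remaining elementary of its original type—until it meets the diagonal factor of \eqref{Eq:SixLower}, where the two merge into a single diagonal matrix $\bar D$. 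This leaves $X$ in exactly the five-factor shape of \eqref{Eq:SixLower}, whence two row operations and two column operations diagonally reduce $X$ to $\bar D$.

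The substantive content is carried entirely by Propositions \ref{Prop:CompletableRows} and \ref{Prop:FourForSpecial} (and separativity enters only through the latter); the one genuine obstacle is the bookkeeping that keeps the total at four. Two structural facts make it succeed: an extra lower-triangular factor meets the already-lower leading factor of \eqref{Eq:SixLower} and fuses into one elementary matrix, and the stray diagonal $D'^{-1}$ can be carried through the elementary factors by the normalizing identities and merged with the existing diagonal. Both hinge on the alternating upper/lower pattern of \eqref{Eq:SixLower}, which is precisely why that pattern was recorded.
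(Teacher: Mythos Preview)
Your argument is correct. You and the paper use the same two ingredients (Propositions \ref{Prop:CompletableRows} and \ref{Prop:FourForSpecial}) and the same observation that $X$ and the auxiliary matrix $A$ share their top row, but you exploit that observation differently. You compute the full matrix relation $AX^{-1}=LD'$, substitute the decomposition \eqref{Eq:SixLower} of $A$, and then merge the extra lower-triangular and diagonal factors into the existing ones via the normalizing identity. The paper instead passes to the column-only form $A'U_1L_1U_2L_2=D$ of Proposition \ref{Prop:FourForSpecial}, reads off $[a,b]U_1L_1U_2L_2=[u,0]$, and then observes that $[a,b]U_1L_1=[u,0]L_2^{-1}U_2^{-1}$ still has the unit $u$ in its first slot; hence the original $X$, after the same two column operations $U_1L_1$, has a unit in its upper-left corner and finishes in two more steps. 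The paper's route is shorter because it never needs the full relation between $X$ and $A$, only the top-row agreement; your route makes the bookkeeping more explicit and shows concretely how the extra factors are absorbed. Either way the count is four.
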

\begin{proof}
Given $A\in {\rm GL}_2(R)$, let $[a,b]$ be the top row of $A$.  By Proposition \ref{Prop:CompletableRows}, it is also the top row of another matrix $A'\in {\rm GL}_2(R)$, where the bottom-left entry of $A'$ is full and regular.  Thus, putting \eqref{Eq:NormalizingEquality} together with Proposition \ref{Prop:FourForSpecial}, then there exist two lower-triangular elementary matrices $L_1,L_2$ and two upper-triangular elementary matrices $U_1,U_2$, such that $A'U_1L_1U_2L_2$ is diagonal.

This means that $[a,b]U_1L_1U_2L_2=[u,0]$ for some unit $u\in R$.  But then
\[
[a,b]U_1L_1=[u,0]L_2^{-1}U_2^{-1}=[u,\ast].
\]
Hence, $AU_1L_1$ has a unit entry in its upper-left corner.  Thus, we can fully reduce to a diagonal matrix using just two more elementary operations.
\end{proof}

Looking at the proof of Theorem \ref{Thm:FourIsEnough}, we see that in fact any element of ${\rm GL}_2(R)$ can be diagonally reduced using four elementary (column) operations, effected by starting with an upper-triangular matrix.  By symmetry considerations, we could just have easily have done the same thing starting with a lower-triangular matrix.

\section{Diagonal reduction of more matrices}\label{Section:6NowWorks}

It has long been known that every regular matrix $A$ over a separative exchange ring admits a diagonal reduction, by \cite[Theorem 3.1]{AGOPEarly}.  This means that there are invertible matrices $P,Q$ such that $D:=PAQ$ is diagonal.  If $A$ is $2\times 2$, then so are $P$ and $Q$, and from the results of the previous section we know that each can be expressed as products of at most four elementary matrices and a diagonal invertible matrix, the ``diagonal part.''  Thus, since we can incorporate both diagonal parts into $D$, we see that four row operations and four column operations are sufficient to diagonally reduce $A$.  But are they necessary?

It is straightforward to come up with examples showing that we must use at least three (elementary) row operations and at least three (elementary) column operations, as follows.  Let $[a,b]$ be any right unimodular row over a separative regular ring $R$ that is not completable.  (We gave an explicit example of such a row, generated by the matrices in \eqref{Eq:NotCompletable}.)

First, we claim that using only column operations, it is impossible to transform this row to the form $[x,0]$.  Assuming otherwise, then $x$ would be right invertible, say $xy=1$ for some $y\in R$.  Then the row $[x,0]$ is completable to $\left[\begin{smallmatrix}x & 0\\ 1-yx & y\end{smallmatrix}\right]$ with inverse $\left[\begin{smallmatrix}y & 1-yx\\ 0 & x\end{smallmatrix}\right]$.  Hence, the original row $[a,b]$ would have been completable.

Next, let $U$ and $L$ be arbitrary upper-triangular and lower-triangular (respectively) elementary matrices.  Thus, taking $A_1:=\left[\begin{smallmatrix}a & b\\ 0 & 0\end{smallmatrix}\right]$, then the top row of $LUA_1$ cannot be put in the form $[x,0]$ using any number of column operations.  Similarly, taking $A_2:=\left[\begin{smallmatrix}0 & 0\\ a & b\end{smallmatrix}\right]$, then $ULA_2$ cannot be made diagonal using any number of column operations.  Combining this information, the matrix
\[
\begin{bmatrix}(a,0) & (b,0)\\ (0,a) & (0,b)\end{bmatrix}\in \M_2(R\times R)
\]
cannot be made diagonal using only two row operations and any number of column operations.  By working over the opposite ring $R^{\rm op}$, we can reverse the roles of the column and row operations.  The matrix
\[
\begin{bmatrix}(a,0,a^{\rm op},0) &(b,0,0,a^{\rm op})\\ (0,a,b^{\rm op},0) & (0,b,0,b^{\rm op}) \end{bmatrix}\in \M_2(R\times R\times R^{\rm op}\times R^{\rm op})
\]
needs at least three column operations and at least three row operations to diagonally reduce.

Our next goal is to show that over separative \emph{regular} rings, we can reduce to a diagonal matrix using only three row operations and three column operations, thus improving on the naive bound of four row and four column operations (mentioned in the first paragraph of this section).  We are unsure whether this result applies to regular $2\times 2$ matrices over separative exchange rings.  As we will focus on regular rings, many of the arguments are simpler than those that appeared in the previous section.  We will make essential use of repeaters.

The following proposition describes a series of elementary operations we can use to first produce a unit-regular entry.  In order to simplify notation, let $\mathscr{U}$ be the subgroup of ${\rm GL}_2$ consisting of upper-triangular elementary matrices, and similarly let $\mathscr{L}$ be the subgroup of lower-triangular elementary matrices.

\begin{prop}\label{Prop:Start7}
Let $R$ be a regular ring, and let $A\in \M_2(R)$ be an arbitrary $2\times 2$ matrix.  Then there is a matrix in $\mathscr{U}\mathscr{L}A\mathscr{U}\mathscr{L}$ whose upper left corner entry, which we will call $z$, is a repeater.  In particular, if $R$ is \textup{(}core\textup{)} separative, then $z$ is unit-regular.
\end{prop}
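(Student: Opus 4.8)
The plan is to produce the four elementary parameters so that the upper-left entry $z$ of a product $U_1L_1AU_2L_2$ (with $U_1,U_2\in\mathscr{U}$ and $L_1,L_2\in\mathscr{L}$) is recognizably a repeater in the sense of Lemma \ref{Lemma:SpecialRegularRepeaters}; once this is done the ``in particular'' clause is immediate, since Theorem \ref{Thm:SpecialRepeaters2} says that over a (core) separative ring every repeater is unit-regular. Writing $A=\left[\begin{smallmatrix}a&b\\ c&d\end{smallmatrix}\right]$ and fixing an inner inverse $a'$ of $a$, the mechanism I would exploit is that the two types of operations install the two types of projection factors on opposite sides. A column operation that adds a multiple of the first column (whose top entry is the regular element $a$) to the second column can be tuned, by taking its parameter to be $-a'b$, so that the resulting $(1,2)$-entry is $(1-aa')b$: this installs a \emph{left} factor $1-aa'$. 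Dually, the row operation adding $-ca'$ times the first row to the second produces the $(2,1)$-entry $c(1-a'a)$, installing a \emph{right} factor $1-a'a$. These are exactly the two ingredients out of which the repeaters of Proposition \ref{Prop:SpecialImageRepeaters} and its left-right dual are built.

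The idea is then to spend one operation on each side manufacturing these factors, and the remaining upper-triangular operation on the left together with the remaining lower-triangular operation on the right transporting the relevant pieces into the $(1,1)$-position, with those two parameters again chosen from the entries and from $a'$. The element I aim to realize in the corner is one to which the repeater results apply: an expression simultaneously visible as a right multiple of the right repeater $(1-aa')ba$ (Proposition \ref{Prop:SpecialImageRepeaters}) and as a left multiple of the left repeater $ac(1-a'a)$, which by Lemma \ref{Lemma:SpecialRegularRepeaters} is a genuine two-sided repeater. A more robust variant, which I expect to be the safer route if the relevant idempotent mutates under the manipulations, is to first use two operations to make one corner entry $y$ a left repeater and then arrange $z=(1-yy')b'y$ for a suitable $b'$; by Proposition \ref{Prop:SpecialRepeaters} such an element is automatically both a right and a left repeater, so its two-sidedness comes for free rather than by matching a prescribed product.

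The hard part will be the bookkeeping forced by the grouping $\mathscr{U}\mathscr{L}\,A\,\mathscr{U}\mathscr{L}$: the two left (row) operations must be applied as a single block, and likewise the two right (column) operations, so installing the right factor $1-a'a$ and installing the left factor $1-aa'$ cannot be carried out independently. Each installation alters entries that the other uses, and the two transporting operations deposit cross terms into the $(1,1)$-entry alongside the wanted product, together with a stubborn surviving copy of the original entry $a$. The crux is therefore to select the four parameters so that these extra contributions either cancel or are absorbed, leaving a value of $z$ that lies in repeater form; I expect this to require inner inverses of the \emph{intermediate} entries rather than of $a$ alone, repeated use of the identities $(1-aa')a=0$ and $a(1-a'a)=0$ to annihilate unwanted terms, and the flip-and-double trick employed in the proof of Proposition \ref{Prop:SpecialRepeaters} to exhibit the needed isomorphism of images. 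Once $z$ is displayed as such a repeater, Lemma \ref{Lemma:SpecialRegularRepeaters} together with Theorem \ref{Thm:SpecialRepeaters2} completes the argument.
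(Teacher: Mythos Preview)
Your ``more robust variant'' is precisely the paper's approach, and it is far cleaner than you anticipate. The two stages are completely decoupled, and the cross-term worry dissolves with one small trick you have not spotted.

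Stage 1 (two column operations): take $U_1=\left[\begin{smallmatrix}1 & -a'-a'b\\ 0 & 1\end{smallmatrix}\right]$ and $L_1=\left[\begin{smallmatrix}1 & 0\\ a & 1\end{smallmatrix}\right]$. The point is the extra $-a'$ in the parameter of $U_1$, not just the $-a'b$ you propose. That extra term makes the $(1,2)$-entry of $AU_1$ equal to $-aa'+(1-aa')b$, so after $L_1$ the $(1,1)$-entry becomes
\[
a+\bigl(-aa'+(1-aa')b\bigr)a=a-aa'a+(1-aa')ba=(1-aa')ba,
\]
and the ``stubborn surviving copy'' of $a$ cancels exactly. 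Call this right repeater $x$.

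Stage 2 (two row operations): write the left column of $AU_1L_1$ as $\left[\begin{smallmatrix}x\\ y\end{smallmatrix}\right]$ and repeat the dual trick with $L_2=\left[\begin{smallmatrix}1 & 0\\ -x'-yx' & 1\end{smallmatrix}\right]$ and $U_2=\left[\begin{smallmatrix}1 & x\\ 0 & 1\end{smallmatrix}\right]$. The $(1,1)$-entry of $U_2L_2AU_1L_1$ is $z=xy(1-x'x)$, visibly a left repeater (it has the form $xc(1-x'x)$), and still a right repeater because it is a right multiple of the right repeater $x$ (Lemma \ref{Lemma:SpecialRegularRepeaters}). No appeal to Proposition \ref{Prop:SpecialRepeaters} or the flip-and-double trick is needed here; the two-sidedness is immediate from Lemma \ref{Lemma:SpecialRegularRepeaters}.

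So your instinct to use the inner inverse of the \emph{intermediate} entry was exactly right; what you were missing is the single extra $-a'$ (and dually $-x'$) term that kills the surviving diagonal entry and makes the whole computation a two-line affair rather than a bookkeeping exercise.
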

\begin{proof}
Write $A=\left[\begin{smallmatrix}a & b\\ \ast & \ast\end{smallmatrix}\right]$ for some $a,b\in R$.  Letting $U_1:=\left[\begin{smallmatrix}1 & -a'-a'b\\ 0 & 1\end{smallmatrix}\right]$ and $L_1:=\left[\begin{smallmatrix}1 & 0\\ a & 1\end{smallmatrix} \right]$, then $x:=(1-aa')ba$ is the upper left corner of $AU_1 L_1$.  This is a right repeater.

Repeat the previous work, but now using row operations instead of column operations.  Write $AU_1L_1 = \left[\begin{smallmatrix}x & \ast\\ y & \ast\end{smallmatrix}\right]$, for some $y\in R$.  Letting $L_2:=\left[\begin{smallmatrix}1 & 0\\ -x'-yx' & 1\end{smallmatrix}\right]$ and $U_2=\left[\begin{smallmatrix}1 & x\\ 0 & 1\end{smallmatrix}\right]$, then $z:=xy(1-x'x)$ is the upper left corner of $U_2 L_2 A U_1 L_1$.  This is visually a left repeater.  It also remains an right repeater, by Lemma \ref{Lemma:SpecialRegularRepeaters}.  Thus, it is a repeater.
\end{proof}

Note that the previous proposition and proof work just as well for matrices in $\M_n(R)$ with $n>2$, by restricting attention to the northwest $2\times 2$ corner of such matrices.  The proof also shows that using just column operations, we can force the northwest corner to be a right repeater.

With the existence of a unit-regular entry, it is straightforward to diagonally reduce.  First, there is one other result that we will need, generalizing Lemma \ref{Lemma:NewFactFullIdemExchange}.

\begin{lemma}\label{Lemma:StrongerExchangeFacts}
Let $R$ be an exchange ring, and let $a,b\in R$.  If $aR+bR=pR$ for some idempotent $p\in R$, then there exist orthogonal idempotents $e\in aR$ and $f\in bR$ with $e+f=p$.  Moreover, given any idempotent $q\in aR$, we can choose $e$ \textup{(}depending on $q$\textup{)} so that $q\in eR$.  In particular, if $aR\subseteq^{\oplus} R_R$, then we can choose $e$ so that $eR=aR$.
\end{lemma}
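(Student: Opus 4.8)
The plan is to run the proof of Lemma \ref{Lemma:NewFactFullIdemExchange} almost verbatim, with the module $R_R$ replaced throughout by the summand $pR$. Two preliminary observations make this legitimate. First, since $aR+bR=pR$, both $aR$ and $bR$ are contained in $pR$, so all of the modules in sight live inside $pR$. Second, $pR$ is a direct summand of $R_R$ and therefore inherits the finite exchange property; equivalently, by \cite[Proposition 2.9]{Nicholson}, $pR$ enjoys property $(N)_{\rm finite}$, which is exactly the hypothesis under which the $(N)_{\rm finite}$-version of \cite[Lemma 2.5]{Stock} (already used in the proof of Lemma \ref{Lemma:NewFactFullIdemExchange}) operates.

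For the main assertion I would begin with the \emph{moreover} clause, since the bare existence statement is just its $q=0$ instance. Fix an idempotent $q\in aR$, so that $qR\subseteq aR\subseteq pR$. The first genuinely new point is that $qR$ is a direct summand of $pR$: because $pq=q$, every $x\in pR$ decomposes as $x=qx+(1-q)x$ with $qx\in qR\subseteq pR$ and $(1-q)x\in (1-q)R\cap pR$, giving $pR=qR\oplus\big((1-q)R\cap pR\big)$. Now apply the $(N)_{\rm finite}$-version of \cite[Lemma 2.5]{Stock} to this decomposition together with the covering $pR=aR+bR$: this produces submodules $A'\subseteq aR$ and $B'\subseteq bR$ with $pR=qR\oplus A'\oplus B'$. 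The internal decomposition $pR=(qR\oplus A')\oplus B'$ yields orthogonal idempotents $e,f$ with $e+f=p$, $eR=qR\oplus A'$, and $fR=B'$. Reading off containments gives $e\in eR\subseteq aR$ (since $qR,A'\subseteq aR$), $f\in fR=B'\subseteq bR$, and $q\in qR\subseteq eR$, which is everything claimed.

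The existence statement then follows by taking $q=0$. For the final sentence, if $aR\subseteq^{\oplus}R_R$ then $aR=qR$ for some idempotent $q\in aR$; applying the above to this $q$ gives $aR=qR\subseteq eR\subseteq aR$, whence $eR=aR$. The only ingredients beyond Lemma \ref{Lemma:NewFactFullIdemExchange} are the passage from $R_R$ to the summand $pR$ and the verification that $qR$ sits as a summand of $pR$; I expect the former to be routine, since the exchange property passes to summands, and the latter to be the single short computation isolated above, so I do not anticipate a serious obstacle.
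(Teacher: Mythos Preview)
Your proof is correct and uses the same key tool as the paper---the $(N)_{\rm finite}$ version of \cite[Lemma~2.5]{Stock}---to produce the decomposition $pR=qR\oplus A'\oplus B'$, followed by the same orthogonal idempotent splitting $p=e+f$. The only difference is cosmetic: the paper works ambiently in $R_R$, adjoining the extra summand $(1-p)R$ to both the covering $aR+bR$ and the decomposition $qR\oplus X$, and then argues afterward that the resulting $C'\subseteq(1-p)R$ must vanish and that $qR\oplus A'\oplus B'$ fills out exactly $pR$; you instead pass to $pR$ at the outset, invoking that $(N)_{\rm finite}$ descends to summands and verifying directly that $qR\subseteq^{\oplus}pR$. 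Your route is slightly more streamlined, at the cost of one extra observation (inheritance of $(N)_{\rm finite}$), while the paper's route uses only the exchange property of $R_R$ itself; neither approach has any real advantage over the other.
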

\begin{proof}
We have
\[
aR+bR+(1-p)R =R_R=X\oplus qR\oplus (1-p)R,
\]
where $X$ is any direct summand complement of $qR$ in $pR$.  By \cite[Lemma 2.5]{Stock}, with $(N)_{\rm finite}$ in place of $(N)$, we have
\begin{equation}\label{Eq:StrongExchangeDisplay2}
R_R=qR\oplus (1-p)R\oplus A'\oplus B'\oplus C',
\end{equation}
for some submodules $A'\subseteq aR$ and $B'\subseteq bR$ and $C'\subseteq (1-p)R$.

Clearly $C'=0$, since $(1-p)R$ is a direct sum complement to $C'\subseteq (1-p)R$.  Also, since $qR\oplus A'\oplus B'\subseteq aR+bR=pR$, and $pR$ is a direct summand complement to $(1-p)R$, we must have from \eqref{Eq:StrongExchangeDisplay2} that $qR\oplus A'\oplus B'=pR$.

Write $p=e+f$ for orthogonal idempotents $e,f$ such that $eR= qR\oplus A'$ and $fR= B'$.  This is a decomposition into orthogonal (sub)idempotents, as desired.

The last sentence is proved exactly as before.
\end{proof}

\begin{prop}\label{Prop:End7}
Let $R$ be a regular ring, and let $A:=\left[\begin{smallmatrix}a & b\\ c & d\end{smallmatrix}\right]\in \M_2(R)$, where $a\in R$ is unit-regular.  Then there is an element of $\mathscr{L}\mathscr{U}A\mathscr{L}\mathscr{U}$ that is diagonal.
\end{prop}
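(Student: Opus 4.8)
The plan is to use the two inner operations to pre-condition the corner and the two outer operations to clear the off-diagonal entries. Since $a$ is unit-regular, I would fix a unit $w\in\U(R)$ with $awa=a$ and set the idempotents $e:=aw$ and $f:=wa$; these satisfy $\im(a)=eR$, $Ra=Rf$, $a=ea=af$, and left multiplication by $w$ restricts to an isomorphism $(1-e)R\xrightarrow{\sim}(1-f)R$ (this last fact is the only place unit-regularity, rather than mere regularity, will be used). Write a general element of $\mathscr{L}\mathscr{U}A\mathscr{L}\mathscr{U}$ as $L_2U_2AL_1U_1$, where $L_1=\left[\begin{smallmatrix}1&0\\S&1\end{smallmatrix}\right]$ and $U_2=\left[\begin{smallmatrix}1&P\\0&1\end{smallmatrix}\right]$ are the inner lower- and upper-triangular factors, while $U_1=\left[\begin{smallmatrix}1&T\\0&1\end{smallmatrix}\right]$ and $L_2=\left[\begin{smallmatrix}1&0\\Q&1\end{smallmatrix}\right]$ are the outer ones.

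A direct computation then shows that the $(1,1)$ entry of $L_2U_2AL_1U_1$ equals $a_1:=a+bS+Pc+PdS$, while the $(1,2)$ and $(2,1)$ entries are $a_1T+(b+Pd)$ and $Qa_1+(c+dS)$, respectively. Hence the product is diagonal provided the outer multipliers $T,Q$ can be chosen to annihilate these entries, which is possible exactly when $b+Pd\in a_1R$ and $c+dS\in Ra_1$. Thus the whole proposition reduces to the following absorption problem: choose inner parameters $S,P\in R$ so that, with $a_1=a+bS+Pc+PdS$, the element $b+Pd$ lies in the right ideal $a_1R$ and $c+dS$ lies in the left ideal $Ra_1$; the residual $(2,2)$ entry is then unconstrained.

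To solve the absorption problem I would exploit that $R$ is regular, so every finitely generated one-sided ideal is a direct summand cut out by an idempotent, together with Lemma \ref{Lemma:StrongerExchangeFacts}. The idea is to use the inner lower-triangular column operation (the parameter $S$) to enlarge the image of the corner so that it absorbs the part of $b$ lying in $(1-e)R$, drawing on the ``room'' provided by $\ker(a)=(1-f)R$, and symmetrically to use the inner upper-triangular row operation (the parameter $P$) to enlarge the row space of the corner so that it absorbs the part of $c$ lying in $R(1-f)$, drawing on the cokernel room $(1-e)R$. Lemma \ref{Lemma:StrongerExchangeFacts} produces the relevant orthogonal idempotents in $bR$ and in $Rc$, and the isomorphism $(1-e)R\cong(1-f)R$ furnished by $w$ is what guarantees that the two enlargements can be fitted together. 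The main obstacle is exactly this simultaneity: each inner operation perturbs both the image and the row space of the corner, and the two are further entangled through the term $PdS$ and the corrections $b+Pd$, $c+dS$ coming from the entry $d$. Reconciling the column-side and row-side absorptions in a single corner $a_1$ is precisely where unit-regularity (matching cokernel to kernel) is indispensable and where a merely regular corner would fail. Once compatible $S,P$ are found, the two outer operations clear the off-diagonal entries, producing a diagonal matrix and completing the proof.
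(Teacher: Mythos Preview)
Your setup is correct: writing a general element of $\mathscr{L}\mathscr{U}A\mathscr{L}\mathscr{U}$ as $L_2U_2AL_1U_1$ and reducing to the two absorption conditions $b+Pd\in a_1R$ and $c+dS\in Ra_1$ is a clean reformulation. However, the proposal stops precisely at the hard step. You yourself identify ``the main obstacle'' as the simultaneity of the two absorptions, coupled through the cross-term $PdS$ and through the $d$-corrections to $b$ and $c$, and then write ``once compatible $S,P$ are found'' without ever finding them. Saying that Lemma~\ref{Lemma:StrongerExchangeFacts} will produce the right idempotents and that the isomorphism $(1-e)R\cong(1-f)R$ will let the two sides be ``fitted together'' is a hope, not an argument; no concrete choice of $S$ or $P$ is given, and no mechanism is proposed for decoupling the column-side enlargement of $\im(a_1)$ from the row-side enlargement of $Ra_1$. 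As written this is a plan of attack rather than a proof, and the gap is exactly the content of the proposition.

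The paper avoids the simultaneous absorption problem altogether by \emph{staging} the reduction asymmetrically. First it uses the inner column operation $L_1$ (with its parameter chosen via Lemma~\ref{Lemma:StrongerExchangeFacts}, and using unit-regularity to make that parameter a unit) together with part of the outer column operation $U_1$ to kill the $(1,2)$ entry outright and put the top row in the form $[p,0]$ with $p$ an idempotent satisfying $aR+bR=pR$. A further piece of $U_1$ then conditions the $(2,2)$ entry. Only after the column side is finished are the row operations applied: the inner $U_2$ modifies the $(1,1)$ corner to an element $z$ with $Rx\subseteq Rz$ (where $x$ is the current $(2,1)$ entry), and the outer $L_2$ then clears the $(2,1)$ entry. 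The point is that by zeroing the $(1,2)$ position first, the later row operations no longer disturb it, so the two ``absorptions'' never have to be solved simultaneously. Your symmetric formulation hides this staging, which is where the actual work lies.
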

\begin{proof}
As $a$ is unit-regular, write $a=eu$ for an idempotent $e\in R$ and a unit $u\in R$.  Write $aR+bR=pR$ for some idempotent $p\in R$.  Applying the strong form of Lemma \ref{Lemma:StrongerExchangeFacts}, we can find a decomposition $p=ar+bs$ into orthogonal subidempotents, for some $r,s\in R$, but also guaranteeing that $arR=aR$.  In other words, $eurR=euR$, so by \cite[Ex.\ 21.4]{LamExercises} we may replace $r$ by a unit.

Letting $L_1:=\left[\begin{smallmatrix}1 & 0\\ sr^{-1} & 1\end{smallmatrix}\right]$, then the top row of $AL_1$ is $[pr^{-1},b]$.  Letting $U_{1,1}:=\left[\begin{smallmatrix}1 & -rb\\ 0 & 1\end{smallmatrix}\right]$, then the upper right entry of $AL_1 U_{1,1}$ is zero.  Letting $D:=\left[\begin{smallmatrix}r & 0\\ 0 & 1\end{smallmatrix}\right]$, we can write
\[
AL_1 U_{1,1}D = \begin{bmatrix}p & 0\\ x & y\end{bmatrix}
\]
for some $x,y\in R$.  Multiplying by $D$ simplifies later computations, and we will explain how to remove it later.

Let $h:=x(1-p)[x(1-p)]'$, which is an idempotent.  Setting $U_{1,2}:=\left[\begin{smallmatrix}1 & -(1-p)[x(1-p)]'y\\ 0 & 1\end{smallmatrix}\right]$, then
\[
AL_1 U_{1,1}DU_{1,2}=\begin{bmatrix}p & 0\\ x & (1-h)y\end{bmatrix}.
\]

Next, fix an idempotent $q$ such that $Rp+Rx(1-p)=Rq$.  From the left-right symmetric version of Lemma \ref{Lemma:StrongerExchangeFacts}, we can fix $v,w\in R$ so that $q=vp+wx(1-p)$ is an orthogonal sum of idempotents where $Rp=Rvp$.  Again applying \cite[Ex.\ 21.4]{LamExercises} we may assume $v$ is a unit.

From orthogonality, we know $vpwx(1-p)=0$.  Since $v$ is a unit, $pwx(1-p)=0$.  In other words, $(1-p)wx(1-p)=wx(1-p)$.  Thus, without losing any generality, we might as well have fixed $w\in (1-p)R$.

Now, letting $U_{2}:=\left[\begin{smallmatrix}1 & v^{-1}wh\\ 0 & 1\end{smallmatrix} \right]$, we find
\[
U_2 A L_1 U_{1,1}DU_{1,2} = \begin{bmatrix}p+v^{-1}whx & 0\\ x & (1-h)y\end{bmatrix}.
\]
Set $z:=p+v^{-1}whx$, which is the upper left corner entry of this matrix.  We compute
\begin{eqnarray*}
vz & = & vp+whx \ = \ vp+whx(1-p)+whxp\\
  & = & vp+wx(1-p)+(1-p)whxp\ =\ (1+(1-p)whxp)q.
\end{eqnarray*}
Thus, $[1+(1-p)whxp]^{-1}vz=q$.  Since
\[
Rx\subseteq Rx(1-p)+Rxp\subseteq Rx(1-p)+Rp=Rq,
\]
we can write $x=tq$ for some $t\in R$. Therefore, by taking
\[
L_2:=\begin{bmatrix}1 & 0\\ -t[1+(1-p)whxp]^{-1}v & 1\end{bmatrix},
\]
then $L_2 U_2 A L_1 U_{1,1}DU_{1,2}$ is diagonal.

Of course, multiplying on the right by $D^{-1}$ keeps it diagonal.  As $U_1:=U_{1,1}DU_{1,2}D^{-1}\in \mathscr{U}$, we are done.
\end{proof}

Putting Proposition \ref{Prop:Start7} together with Proposition \ref{Prop:End7}, and using the second paragraph of this section, we now know:

\begin{thm}\label{Thm:SixWithThreeOnBothSides}
Over a \textup{(}core\textup{)} separative regular ring, every $2\times 2$ matrix can be diagonally reduced using just three row operations and three column operations. Nothing less on either side can work in general.
\end{thm}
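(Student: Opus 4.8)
The plan is to prove the theorem in two halves: the positive statement, that three row operations together with three column operations suffice over a (core) separative regular ring, and the sharpness statement, that fewer cannot work in general. The positive half I would obtain by simply chaining Proposition \ref{Prop:Start7} into Proposition \ref{Prop:End7} and watching the elementary matrices at the seam collapse. By Proposition \ref{Prop:Start7} there exist $U_a,U_c\in\mathscr{U}$ and $L_b,L_d\in\mathscr{L}$ such that $A':=U_aL_bAU_cL_d$ has upper-left entry a repeater, which by (core) separativity is unit-regular. Feeding $A'$ into Proposition \ref{Prop:End7} produces $L_e,L_g\in\mathscr{L}$ and $U_f,U_h\in\mathscr{U}$ with $L_eU_fA'L_gU_h$ diagonal. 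Writing out the full product $L_eU_fU_aL_bAU_cL_dL_gU_h$, the two adjacent upper-triangular factors $U_fU_a$ on the left combine into a single element of $\mathscr{U}$, and the two adjacent lower-triangular factors $L_dL_g$ on the right combine into a single element of $\mathscr{L}$, exactly as in $\left[\begin{smallmatrix}1&p\\0&1\end{smallmatrix}\right]\left[\begin{smallmatrix}1&q\\0&1\end{smallmatrix}\right]=\left[\begin{smallmatrix}1&p+q\\0&1\end{smallmatrix}\right]$. The left side then collapses to a three-factor lower--upper--lower string of row operations, the right side to a three-factor upper--lower--upper string of column operations, and $A$ is diagonalized with three of each.

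The one thing to verify here is that the triangular types genuinely match at the seam, and this is forced by the precise orderings recorded in the two propositions: Proposition \ref{Prop:Start7} ends (outermost) on the left with a factor in $\mathscr{U}$ and on the right with a factor in $\mathscr{L}$, while Proposition \ref{Prop:End7} begins (innermost) on the left with a factor in $\mathscr{U}$ and on the right with a factor in $\mathscr{L}$. Hence upper meets upper on the left and lower meets lower on the right, so each seam really does save one operation; and no further collapse occurs, since each surviving three-factor string alternates in triangular type.

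For the sharpness half I would reuse the construction from the second paragraph of this section. Begin with a right unimodular but non-completable row $[a,b]$ over a separative regular ring, for instance the row generated by \eqref{Eq:NotCompletable}. The key observation is that column operations alone can never bring $[a,b]$ to the form $[x,0]$: such an $x$ would be right invertible, whence $[x,0]$, and therefore $[a,b]$, would be completable. One then checks that sandwiching $\left[\begin{smallmatrix}a&b\\0&0\end{smallmatrix}\right]$ between one lower- and one upper-triangular row operation still leaves the top row irreducible to $[x,0]$ by any number of column operations; performing the dual manipulation with $\left[\begin{smallmatrix}0&0\\a&b\end{smallmatrix}\right]$, and then passing to direct sums over $R\times R$ and over $R^{\mathrm{op}}$, produces the matrix over $R\times R\times R^{\mathrm{op}}\times R^{\mathrm{op}}$ displayed earlier, which can be diagonalized with neither fewer than three row operations nor fewer than three column operations. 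Together with the positive half, this proves the theorem.

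I expect the sharpness argument, rather than the combination, to be the main obstacle. The delicate point is bounding the number of row operations while permitting an arbitrary number of column operations (and dually): one must argue that the completability obstruction for $[a,b]$ persists under all column manipulations together with a single pair of row operations of opposite triangular type, so that two row operations provably cannot suffice. Making ``any number of column operations'' precise, and checking that the direct-sum and opposite-ring bookkeeping does not inadvertently admit a shorter reduction, is where the care lies; the three-plus-three upper bound is essentially automatic once Propositions \ref{Prop:Start7} and \ref{Prop:End7} are in hand.
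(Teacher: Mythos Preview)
Your proposal is correct and takes essentially the same approach as the paper: the positive half is precisely the paper's chaining of Proposition~\ref{Prop:Start7} into Proposition~\ref{Prop:End7}, with the seam collapse you describe yielding the pattern $\mathscr{L}\mathscr{U}\mathscr{L}A\mathscr{U}\mathscr{L}\mathscr{U}$ that the paper records immediately after the theorem, and the sharpness half is exactly the argument the paper places in the opening paragraphs of Section~\ref{Section:6NowWorks}. Your worry about the sharpness argument is unfounded here, since that argument is already carried out in full in the paper prior to the theorem and is simply cited; the only content of the theorem's own proof is the seam collapse you spelled out.
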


More precisely, given any matrix $A\in \M_2(R)$, when $R$ is a core separative regular ring, then
\[
\mathscr{L}\mathscr{U}\mathscr{L}A\mathscr{U}\mathscr{L}\mathscr{U}
\]
contains a diagonal element. By a symmetric construction (or, alternatively, noting that conjugation by $\left[\begin{smallmatrix}0 & 1\\ 1 & 0\end{smallmatrix}\right]$ is an automorphism of $\M_2(R)$), we similarly know that
\[
\mathscr{U}\mathscr{L}\mathscr{U}A\mathscr{L}\mathscr{U}\mathscr{L}
\]
always contains a diagonal element.

Analyzing the proof, we similarly see that for any $1\times 2$ matrix $A$ over a core strongly separative ring, then
\[
A \mathscr{U}\mathscr{L}\mathscr{U}
\]
contains a diagonal row matrix $[\ast\ \ 0]$.

The separativity problem can now be restated as the following question:

\begin{question}
Does there exist a regular ring, where some $2\times 2$ matrix over that ring cannot be diagonally reduced using just three row operations and three column operations?
\end{question}

\section*{Acknowledgements}

We thank Kevin O'Meara for posing problems and contributing to many conversations that led to much of this paper.  The third author thanks the Mathematics Department of the Universitat Auton\'oma de Barcelona for their hospitality during his visit. The first, fourth and fifth authors were partially supported by the Spanish State Research Agency (grant No.\ PID2020-113047GB-I00/AEI/10.13039/501100011033). The first and fifth authors were partially supported by the Comissionat per Universitats i Recerca de la Generalitat de Catalunya (grant No.\ 2017-SGR-1725) and by the Spanish State Research Agency through the Severo Ochoa and María de Maeztu Program for Centers and Units of Excellence in R\&D (CEX2020-001084-M). The fourth author was partially supported by PAI III grant FQM-298 of the Junta de Andaluc\'ia, and by the grant ``Operator Theory: an interdisciplinary approach'', reference ProyExcel 00780, a project financed in the 2021 call for Grants for Excellence Projects, under a competitive bidding regime, aimed at entities qualified as Agents of the Andalusian Knowledge System, in the scope of the Plan Andaluz de Investigaci\'on, Desarrollo e Innovaci\'on (PAIDI 2020), Consejer\'ia de Universidad, Investigaci\'on e Innovaci\'on of the Junta de Andaluc\'ia.  This work was partially supported by a grant from the Simons Foundation (\#963435 to Pace P.\ Nielsen).

\providecommand{\MR}{\relax\ifhmode\unskip\space\fi MR }
\providecommand{\MRhref}[2]{%
  \href{http://www.ams.org/mathscinet-getitem?mr=#1}{#2}
}
\providecommand{\href}[2]{#2}


\begin{thebibliography}{10}

\bibitem{AGOPEarly}
P.~Ara, K.~R. Goodearl, K.~C. O'Meara, and E.~Pardo, \emph{Diagonalization of
  matrices over regular rings}, Linear Algebra Appl. \textbf{265} (1997),
  147--163. \MR{1466896}

\bibitem{AGOP}
P.~Ara, K.~R. Goodearl, K.~C. O'Meara, and E.~Pardo, \emph{Separative cancellation for projective modules over exchange
  rings}, Israel J. Math. \textbf{105} (1998), 105--137. \MR{1639739}

\bibitem{AGOR}
P.~Ara, K.~R. Goodearl, K.~C. O'Meara, and R.~Raphael, \emph{{$K_1$} of
  separative exchange rings and {$C^\ast$}-algebras with real rank zero},
  Pacific J. Math. \textbf{195} (2000), no.~2, 261--275. \MR{1782176}

\bibitem{ABP}
P.~Ara, J.~Bosa, and E.~Pardo, \emph{The realization problem for
  finitely generated refinement monoids}, Selecta Math. (N.S.) \textbf{26}
  (2020), no.~3, Paper No. 33, 63. \MR{4103463}

\bibitem{BergmanBook}
G.~M. Bergman, \emph{An {I}nvitation to {G}eneral {A}lgebra and {U}niversal
  {C}onstructions}, second ed., Universitext, Springer, Cham, 2015.
  \MR{3309721}

\bibitem{BergmanStrongInner}
G.~M. Bergman, \emph{Strong inner inverses in endomorphism rings of vector spaces},
  Publ. Mat. \textbf{62} (2018), no.~1, 253--284. \MR{3738191}

\bibitem{BergmanDicks}
G.~M. Bergman and W.~Dicks, \emph{Universal derivations and universal
  ring constructions}, Pacific J. Math. \textbf{79} (1978), no.~2, 293--337.
  \MR{531320}

\bibitem{Fuchs}
L.~Fuchs, \emph{On a substitution property of modules}, Monatsh. Math.
  \textbf{75} (1971), 198--204. \MR{296096}

\bibitem{FuchsI}
L.~Fuchs, \emph{Infinite {A}belian {G}roups. {V}ol. {I}}, Pure and
  Applied Mathematics, vol. Vol. 36, Academic Press, New York-London, 1970.
  \MR{255673}

\bibitem{FuchsII}
L.~Fuchs, \emph{Infinite {A}belian {G}roups. {V}ol. {II}}, Pure and Applied
  Mathematics, vol. Vol. 36-II, Academic Press, New York-London, 1973.
  \MR{349869}

\bibitem{Goodearl}
K.~R. Goodearl, \emph{Von {N}eumann {R}egular {R}ings}, second ed., Robert E.
  Krieger Publishing Co., Inc., Malabar, FL, 1991. \MR{1150975}

\bibitem{GoodearlPaper}
K.~R. Goodearl, \emph{Von {N}eumann regular rings and direct sum decomposition
  problems}, Abelian groups and modules ({P}adova, 1994), Math. Appl., vol.
  343, Kluwer Acad. Publ., Dordrecht, 1995, pp.~249--255. \MR{1378203}

\bibitem{Handelman}
D.~Handelman, \emph{Perspectivity and cancellation in regular rings}, J.
  Algebra \textbf{48} (1977), no.~1, 1--16. \MR{447329}

\bibitem{Kaplansky}
I.~Kaplansky, \emph{Infinite {A}belian {G}roups}, University of Michigan
  Press, Ann Arbor, 1954. \MR{0065561}

\bibitem{KLN}
D.~Khurana, T.~Y. Lam, and P.~P. Nielsen, \emph{Exchange elements in
  rings, and the equation {$XA-BX=I$}}, Trans. Amer. Math. Soc. \textbf{369}
  (2017), no.~1, 495--516. \MR{3557782}

\bibitem{Lam}
T.~Y. Lam, \emph{A {F}irst {C}ourse in {N}oncommutative {R}ings}, second ed.,
  Graduate Texts in Mathematics, vol. 131, Springer-Verlag, New York, 2001.
  \MR{1838439}

\bibitem{LamExercises}
T.~Y. Lam, \emph{Exercises in {C}lassical {R}ing {T}heory}, second ed., Problem
  Books in Mathematics, Springer-Verlag, New York, 2003. \MR{2003255}

\bibitem{LamSerreBook}
T.~Y. Lam, \emph{Serre's problem on projective modules}, Springer Monographs in
  Mathematics, Springer-Verlag, Berlin, 2006. \MR{2235330}

\bibitem{LamMurray}
T.~Y. Lam and W.~Murray, \emph{Unit regular elements in corner rings}, Bull.
  Hong Kong Math. Soc. \textbf{1} (1997), no.~1, 61--65. \MR{1466836}

\bibitem{Nicholson}
W.~K. Nicholson, \emph{Lifting idempotents and exchange rings}, Trans. Amer.
  Math. Soc. \textbf{229} (1977), 269--278. \MR{439876}

\bibitem{NS}
P.~P. Nielsen and J.~\v{S}ter, \emph{Connections between unit-regularity,
  regularity, cleanness, and strong cleanness of elements and rings}, Trans.
  Amer. Math. Soc. \textbf{370} (2018), no.~3, 1759--1782. \MR{3739190}

\bibitem{OR}
K.~C. O'Meara and R.~M. Raphael, \emph{Uniform diagonalisation of matrices over
  regular rings}, Algebra Universalis \textbf{45} (2001), no.~4, 383--405.
  \MR{1816975}

\bibitem{OMearaVinsonhaler}
K.~C. O'Meara and C.~Vinsonhaler, \emph{Separative cancellation and multiple
  isomorphism in torsion-free abelian groups}, J. Algebra \textbf{221} (1999),
  no.~2, 536--550. \MR{1728395}

\bibitem{OMearaBetterInverses}
K.~C. O'Meara, \emph{Another multiplicative property of a regular ring that
  has no multiplicative proof}, Semigroup Forum \textbf{101} (2020), no.~3,
  769--778. \MR{4182154}

\bibitem{Stock}
J.~Stock, \emph{On rings whose projective modules have the exchange
  property}, J. Algebra \textbf{103} (1986), no.~2, 437--453. \MR{864422}

\end{thebibliography}
\end{document}